\newsavebox{\SmallMathBox}
\DeclareRobustCommand*{\nicefrac}[2]{\ifmmode\mathnicefrac{#1}
{ #2}%
  \else\textnicefrac{#1}{#2}\fi}
\newcommand*{\textnicefrac}[2]{\check@mathfonts%
\mbox{\raisebox{.5ex}{\fontsize\sf@size\z@\selectfont#1}\kern-.
1em%
/\kern-.1em\raisebox{- .25ex}{\fontsize\sf@size\z@\selectfont#2} }}
\newcommand*{\mathnicefrac}[2]{%
  \mathchoice
    {\m@fr@c{\scriptstyle}{#1}{#2}}
    {\m@fr@c{\scriptstyle}{#1}{#2}}
    {\m@fr@c{\scriptscriptstyle}{#1}{#2}}
    {\m@fr@c{\scriptscriptstyle}{#1}{#2}}}
\newcommand{\abs}[1]{\lvert#1\rvert}
\def\Ci{C^\infty}
\def\fequal#1{\stackrel{#1}{=}}
\def\into{\hookrightarrow}
\def\lla{\langle}
\def\noi{\noindent}
\newcommand{\norm}[1]{\lVert#1\rVert}
\def\ol{\overline}
\def\rra{\rangle}
\def\sqm1{\sqrt{-1}}
\def\tand{\mbox{\ \rm  and }}
\def\too{\longrightarrow}
\def\wt{\widetilde}
\def\={\cong}
\def\>{\supset}
\def\<{\subset}
\def\ii{^{-1}}
\def\12{\frac{1}{2}}
\def\0{^{\circ}}
\def\C{\CC}
\renewcommand*{\d}{\delta}
\newcommand*{\D}{\Delta}
\def\e{\varepsilon}
\def\f{\varphi}
\def\g{\gamma}
\def\la{\lambda}
\def\m{\mu}
\def\N{\NN}
\def\R{\RR}
\def\s{\sigma}
\def\Si{\Sigma}
\def\w{\omega}
\def\z{\zeta}
\def\Z{\ZZ}
\def\CC{{\mathbb C}}
\def\NN{{\mathbb N}}
\def\RR{{\mathbb R}}
\def\ZZ{{\mathbb Z}}
\def\Aa{{\mathcal A}}
\def\Bb{{\mathcal B}}
\def\Cc{{\mathcal C}}
\def\Ff{{\mathcal F}}
\def\Ll{{\mathcal L}}
\def\Pp{{\mathcal P}}
\def\Ss{{\mathcal S}}
\def\Uu{{\mathcal U}}
\def\CLR{{\mathcal CLR}}
\def\GGG{{\mathfrak G}}
\newcommand\Calderon{Cal\-der{\'o}n}
 \DeclareMathOperator{\dist}{dist}
\DeclareMathOperator{\diag}{diag}
\DeclareMathOperator{\dom}{dom}
 \DeclareMathOperator{\image}{im}
\def\index{\mbox{\rm index\,}}
\DeclareMathOperator{\Mas}{Mas} \DeclareMathOperator{\mmax}{max}
\DeclareMathOperator{\ran}{im}
\DeclareMathOperator{\range}{im}
\DeclareMathOperator{\SF}{sf}
\journalname{}
\begin{document}

\setcounter{page}{1} \setcounter{tocdepth}{2}

\renewcommand*{\labelenumi}{%
   (\roman{enumi})}


\title{The Maslov index in weak symplectic functional analysis
\thanks{The second author was partially supported by KPCME No. 106047 and NNSF
No. 10621101.}}

\titlerunning{Weak Symplectic Functional Analysis}        

\author{Bernhelm Boo{\ss}-Bavnbek
\and Chaofeng Zhu}
\authorrunning{B. Boo{\ss}-Bavnbek and C. Zhu} 

\institute{B. Boo{\ss}-Bavnbek \at Department of Science, Systems
and Models/IMFUFA\\ Roskilde University, DK-4000 Ros\-kilde, Denmark\\
\email{booss@ruc.dk} \and
C. Zhu \at Chern Institute of Mathematics and LPMC\\
Nankai University, Tianjin 300071, the People's Republic of China\\ \email{zhucf@nankai.edu.cn} }


\maketitle

\begin{abstract}
We recall the Chernoff-Marsden definition of weak symplectic structure and
give a rigorous treatment of the functional analysis
and geometry of weak symplectic Banach spaces. We define
the Maslov index of a continuous path of Fredholm pairs of Lagrangian
subspaces in continuously varying Banach spaces.
We derive basic properties of this Maslov index and emphasize the
new features appearing.
%
%
%
\keywords{Closed relations, Fredholm pairs of Lagrangians, Maslov index, spectral flow, symplectic
splitting, weak symplectic structure.}
%
\subclass{Primary 53D12; Secondary 58J30}
\end{abstract}

\maketitle




\section{Introduction}


\subsection{Our setting and goals}
First, we recall the main features of finite-dimensional and
infinite-dimensional strong symplectic analysis and geometry and argue for the
need to generalize from strong to weak assumptions.

\subsubsection{The finite-dimensional case}
The study of dynamical systems and the variational calculus of $N$-particle classical mechanics
automatically lead to a symplectic structure in the phase space $X=\RR^{6N}$ of position and
impulse variables: when we trace the motion of $N$ particles in $3$-dimensional space, we deal with
a bilinear (in the complex case sesquilinear) anti-symmetric (in the complex case skew-symmetric)
and non-degenerate form $\w\colon X\times X\to\RR$. The reason for the skew-symmetry is the
asymmetry between position and impulse variables corresponding to the asymmetry of differentiation.
To carry out the often quite delicate calculations of mechanics, the usual trick is to replace the
skew-symmetric form $\w$ by a skew-symmetric matrix $J$ with $J^2=-I$ such that
\begin{equation}\label{e:omega2j}
\w(x,y)\ =\ \lla Jx,y\rra\quad\text{ for all $x,y\in X$},
\end{equation}
where $\lla\cdot,\cdot\rra$
denotes the inner product in $X$.

For geometric investigations, the key concept is a Lagrangian subspace of the phase space. For two
continuous paths of Lagrangian subspaces, an intersection index, the {\em Maslov index} is
well-defined. It can be considered as a re-formulation or generalization of counting conjugate
points on a geodesic. In Morse Theory, this number equals the classical Morse index, i.e., the
number of negative eigenvalues of the Hessian (the second variation of the action/energy
functional). This Morse Index Theorem (cf. M. Morse \cite{Mo}) for geodesics on Riemannian
manifolds was extended by W. Ambrose \cite{Am}, J.J. Duistermaat \cite{Du76}, P. Piccione and D.V.
Tausk \cite{PiT1,PiT2}, and the second author \cite{Zh01,Zh05}. See also the work of M. Musso, J.
Pejsachowicz, and A. Portaluri on a Morse index theorem for perturbed geodesics on semi-Riemannian
manifolds in \cite{MuPePo} which has in particular lead N. Waterstraat to a $K$-theoretic proof of
the Morse Index Theorem in \cite{Wa}.

For a systematic review of the basic vector analysis and geometry and for the physics background,
we refer to V.I. Arnold \cite{Ar78} and M. de Gosson \cite{Go01}.

\subsubsection{The strong symplectic infinite-dimensional case}
As shown by K. Furutani and the first author in \cite{BoFu98}, the finite-dimensional approach of
the Morse Index Theorem can be generalized to a separable Hilbert space when we assume that the
form $\w$ is bounded and can be expressed as in \eqref{e:omega2j} with a bounded operator $J$,
which is skew-self-adjoint (i.e., $J^*=-J$) and not only injective but invertible. The
invertibility of $J$ is the whole point of a {\em strong} symplectic structure. Then, without loss
of generality, one can assume $J^2=-I$ like in the finite-dimensional case (see Lemma
\ref{l:weak-strong} below), and many calculations of the finite-dimensional case can be preserved
with only slight modifications. The model space for strong symplectic Hilbert spaces is the von
Neumann space $\beta(A):=\dom(A^*)/\dom(A)$ of {\em natural} boundary values of a closed symmetric
operator $A$ in a Hilbert space $X$ with symplectic form given by Green's form
\begin{equation}\label{e:omega-beta}
\w(\g(u),\g(v)):\ =\  \lla A^*u,v\rra - \lla u,A^*v\rra  \quad\text{
for all $u,v\in \dom(A^*)$},
\end{equation}
where $\lla\cdot,\cdot\rra$ denotes the inner product in $X$ and $\g\colon \dom(A^*)\to\beta(A)$ is
the trace map. A typical example is provided by a linear symmetric differential operator $A$ of
first order over a manifold $M$ with boundary $\Si$. Here we have the minimal domain
$\dom(A)=H_0^1(M)$ and the maximal domain $\dom(A^*)\>H^1(M)$. Note that the inclusion is strict
for $\dim M>1$. Recall that $H_0^1(M)$ denotes the closure of $\Ci_0(M\setminus \Si)$ in $H^1(M)$.
For better reading we do not mention the corresponding vector bundles in the notation of the
Sobolev spaces of vector bundle sections.

As in the finite-dimensional case, the basic geometric concept in infinite-dimensional strong
symplectic analysis is the  Lagrangian subspace, i.e., a subspace which is isotropic and
co-isotropic at the same time. Contrary to the finite-dimensional case, however, the common
definition of a Lagrangian as a {\em maximal} isotropic space or an isotropic space of {\em half}
dimension becomes inappropriate.

In order to define the Maslov index in the infinite-dimensional case as intersection number of two
continuous paths of Lagrangian subspaces, one has to make the additional assumption that
corresponding Lagrangians make a Fredholm pair so that, in particular, we have finite intersection
dimensions.

In \cite{Fl88}, A. Floer suggested to express the spectral flow of a curve of self-adjoint
operators by the Maslov index of corresponding curves of Lagrangians. Following his suggestion, a
multitude of formulae was achieved by T. Yoshida \cite{Yo91}, L. Nicolaescu \cite{Ni95}, S. E.
Cappell, R. Lee, and E. Y. Miller \cite{CaLeMi96}, the first author, jointly with K. Furutani and
N. Otsuki \cite{BoFu99,BoFuOt01} and P. Kirk and M. Lesch \cite{KiLe00}. The formulae are  of
varying generality: Some deal with a fixed (elliptic) differential operator with varying
self-adjoint extensions (i.e., varying boundary conditions); others keep the boundary condition
fixed and let the operator vary. An example for a path of operators is a curve of Dirac operators
on a manifold with fixed Riemannian metric and Clifford multiplication but varying defining
connection (background field). See also the results by the present authors in \cite{BoZh05} for
varying operator and varying boundary conditions but fixed maximal domain and in \cite{BoZh10b} (in
preparation) also for varying maximal domain. Recently, M. Prokhorova \cite{Pro:SFD} considered a
path of Dirac operators on a two-dimensional disk with a finite number of holes subjected to local
elliptic boundary conditions and obtained a beautiful explicit formula for the spectral flow
(respectively, the Maslov index).

\subsubsection{Beyond the limits of the strong symplectic assumption}
Weak (i.e., not necessarily strong) symplectic structures arise on the way to a spectral flow
formula in the full generality wanted: for continuous curves of, say linear formally self-adjoint
elliptic differential operators of first order over a compact manifold of dimension $\ge 2$ with
boundary and with varying maximal domain (i.e., admitting arbitrary continuous variation of the
coefficients of first order) and with continuously varying regular (elliptic) boundary conditions,
see \cite{BoZh10b}.
An interesting new feature for the comprehensive generalization is the following ``technical"
problem: For regular (elliptic) boundary value problems (say for a linear formally self-adjoint
elliptic differential operator $A$ of first order on a compact smooth manifold $M$ with boundary
$\Si$), there are three canonical spaces of boundary values: the above mentioned von Neumann space
$\beta(A)=\dom(A^*)/\dom(A)$, which is a subspace of the distributional Sobolev space
$H^{-1/2}(\Si)$; the space of boundary values $H^{1/2}(\Si) \simeq H^1(M)/H^1_0(M)$ of the operator
domain $H^1(M)$; and the most familiar and basic $L^2(\Si)$.\footnote{In the tradition of
geometrically inspired analysis, we think mostly of {\em homogeneous} systems when talking of
elliptic boundary value problems. Our key reference is the monograph \cite{BoWo93} by K. P.
Wojciechowski and the first author and the supplementary elaborations by J. Br{\"u}ning and M.
Lesch in \cite{BrLe01}. For a more comprehensive treatment, emphasizing {\em non-homogeneous}
boundary value problems and assembling all relevant section spaces in a huge algebra, we refer to
the more recent article \cite{Sch01} by B.-W. Schulze.} As in \eqref{e:omega-beta}, Green's form
induces symplectic forms on all three section spaces which are mutually compatible.

More precisely, Green's form yields a strong
symplectic structure not only on $\beta(A)$, but also on $L^2(\Si)$ by
\[
\w(x,y) :\ =\  -\lla Jx,y\rra_{L^2(\Si)}\,.
\]
Here $J$ denotes the principal symbol of the operator $A$ over the boundary in inner normal
direction. The multiplicative operator induced by $J$ is invertible (= injective and surjective,
i.e., with bounded inverse) since $A$ is elliptic. For the induced symplectic structure on the
Sobolev space $H^{1/2}(\Si)$ the corresponding operator $J'$ is {\em not} invertible for
$\dim\Si\ge 1$, see Remark \ref{r:weak-strong}b in Section \ref{ss:basic-symplectic} below. So, for
$\dim \Si\ge 1$ the space $H^{1/2}(\Si)$ becomes only a {\em weak} symplectic Hilbert space, to use
a notion introduced by P.R. Chernoff and J.E. Marsden \cite[Section 1.2, pp. 4-5]{ChMa74}.

An additional incitement to investigate weak symplectic structures comes from a stunning
observation of E. Witten (explained by M.F. Atiyah in \cite{At85} in a heuristic way). He
considered a weak (and degenerate) symplectic form on the loop space $\operatorname{Map}(S^1,M)$ of
a finite-dimensional closed orientable Riemannian manifold $M$ and noticed that a (future) thorough
understanding of the infinite-dimensional symplectic geometry of that loop space ``should lead
rather directly to the index theorem for Dirac operators" (l.c., p. 43). Of course, restricting
ourselves to the linear case, i.e., to the geometry of Lagrangian subspaces instead of Lagrangian
manifolds, we can only marginally contribute to that program in this paper.

%
%
%

\subsection{Main results and plan of the paper}\label{ss:main-results}
In this paper we shall deal with the preceding \textit{technical} problem. To do that, we
generalize the results of J. Robbin and D. Salamon \cite{RoSa93}, S.E. Cappell, R. Lee, and E.Y.
Miller \cite{CaLeMi94}, K. Furutani, N. Otsuki and the first author in \cite{BoFu99,BoFuOt01} and
of P. Kirk and M. Lesch in \cite{KiLe00}. We give a rigorous definition of the Maslov index for
continuous curves of Fredholm pairs of Lagrangian subspaces in a fixed Banach space with varying
weak symplectic structures and \textit{continuously} varying symplectic splittings and derive its
basic properties. Part of our results will be formulated and proved for relations instead of
operators to admit wider application.

Throughout, we aim for a \textit{clean} presentation in the sense that results are proved in
suitable generality. We wish to show clearly the minimal assumptions needed in order to prove the
various properties. We shall, e.g., prove purely algebraic results algebraically in symplectic
vector spaces and purely topological results in Banach spaces whenever possible - in spite of the
fact that we shall deal with symplectic Hilbert spaces in most applications.

The routes of \cite{BoFu99,BoFuOt01} and \cite{KiLe00} are barred to us because they rely on the
concept of strong symplectic Hilbert space. Consequently, we have to replace some of the familiar
reasoning of symplectic analysis by new arguments. A few of the most elegant lemmata of strong
symplectic analysis can not be retained, but, luckily, the new weak symplectic set-up will show a
considerable strength that is illustrative and applicable also in the conventional strong case.

In Section \ref{s:weak}, we give a thorough presentation of weak symplectic functional analysis.
Basic concepts are defined in Subsection \ref{ss:basic-symplectic}. A new feature of weak
symplectic analysis is the lack of a canonical symplectic splitting: for \textit{strong} symplectic
Hilbert space, we can assume $J^2=-I$ by smooth deformation of the metric, and obtain the canonical
splitting $X=X^+\oplus X^-$ into mutually orthogonal closed subspaces $X^\pm := \ker(J\mp iI)$
which are both invariant under $J$. That permits the representation of all Lagrangian subspaces as
graphs of unitary operators from $X^+$ to $X^-$ (see Lemma \ref{l:strong-symplectic}), which yields
a transfer of contractibility from the unitary group to the space of Lagrangian subspaces.
Moreover, that representation is the basis for a functional analytical definition of the Maslov
index. For \textit{weak} symplectic Hilbert or Banach spaces, the preceding construction does not
work any longer and we must assume that a symplectic splitting is given and fixed (its existence
follows, however, from Zorn's Lemma). Given an elliptic differential operator $A$ of first order
over a manifold $M$ with boundary $\Si$, however, we have a natural symplectic splitting of the
symplectic spaces of sections over $\Si$, both in the strong and weak symplectic case, see Remark
\ref{r:splitting}a, Equation \ref{e:hpm-in-h12}.

In Subsection \ref{ss:fredholm-pairs}, we turn to Fredholm pairs of Lagrangian subspaces to prepare
for the counting of intersection dimensions in the definition of the Maslov index. Here another new
feature of weak symplectic analysis is that the Fredholm index of a Fredholm pair of Lagrangian
subspaces does not need to vanish. On the one hand, this opens the gate to new interesting
theorems. On the other hand, the re-formulation of well-known definitions and lemmata in the weak
symplectic setting becomes rather heavy since we have to add the vanishing of the Fredholm index as
an explicit assumption.

As a side effect of our weak symplectic investigation, we hope to enrich the classical literature
with our new purely algebraic conditions for isotropic subspaces becoming Lagrangians, in Lemma
\ref{l:isotropic-sum-to-lagrangian} and Propositions \ref{p:iso-to-lag} and
\ref{p:fp-characterization}.

At present, the homotopy types of the full Lagrangian Grassmannian and of the Fredholm Lagrangian
Grassmannian remain unknown for weak symplectic structures. We give a list of related open problems
in Subsection \ref{ss:open-problems} below. To us, however, it seems remarkable that a wide range
of familiar geometric features can be re-gained in weak symplectic functional analysis --- in spite
of the incomprehensibility of the basic topology.

In Subsection \ref{ss:isometric}, we lay the next foundation for a rigorous definition of the
Maslov index by investigating continuous curves of operators and relations
that generate Lagrangians in the new wider
setting. Referring to the concepts of our Appendix, we define the spectral flow of such curves.

In Section \ref{s:maslov} we finally come to the intersection geometry. In Subsection
\ref{ss:maslov}, we show how to treat varying weak symplectic structures in a fixed Banach space
with \textit{continuously} varying symplectic splittings and define the Maslov index for continuous
curves of Fredholm pairs of Lagrangian subspaces in this setting. We obtain the full list of basic
properties of the Maslov index as listed by S.E. Cappell, R. Lee, and E.Y. Miller in
\cite{CaLeMi94}. We can not claim that this new Maslov index is always independent of the splitting
projections. However, for strong symplectic Banach space the independence will be proved in
Proposition \ref{p:mas-independence}. That establishes the coincidence with the common definition
of the Maslov index.

In Subsection \ref{ss:comparison2real}, in our general context, we establish the relation between real
symplectic analysis (in the tradition of classical mechanics) on the one side,
and the more elegant complex symplectic analysis (as founded by J. Leray in \cite{Le78}) on the other side.

In Subsection \ref{ss:embedding}, we pay special attention to questions related to the embedding
of symplectic spaces, Lagrangian subspaces and curves into larger symplectic spaces.
Our investigations are inspired by the extremely delicate embedding questions
between the two strong symplectic Hilbert spaces $\beta(A)$ and  $L^2(\Si)$ as studied
by K. Furutani, N. Otsuki and the first author in
\cite{BoFuOt01}. One additional reason for our
interest in embedding problems is our observation of Remark \ref{r:weak-strong}c,
that each weak symplectic Hilbert space can naturally be embedded in a strong symplectic Hilbert space,
imitating the embedding of $H^{1/2}(\Si)$ into $L^2(\Si)$.

In Appendix \ref{ss:gap} and \ref{ss:clr}, we recall the basic knowledge and fix our notations
regarding gaps between closed subspaces in Banach space, uniform properties, closed linear
relations  and their spectral projections. Then, in Appendix \ref{ss:A3}, we give a rigorous
definition of the spectral flow for admissible families of closed relations. Our discussion of
continuous operator families in Subsection \ref{ss:isometric} and the whole of Section
\ref{s:maslov} is based on that definition.

The main results of this paper were achieved many years ago by the authors and informally
disseminated in \cite{BoZh04}. Through all the years, our goal was to establish a truly general
spectral flow formula by applying the weak symplectic functional analysis. But here we met a
technical gap in the argumentation: Only recently we found the correct sufficient conditions for
continuous variation of the Cauchy data spaces (or, alternatively stated, the continuous variation
of the pseudo-differential \Calderon\ projection) for curves of elliptic operators in joint work
with G. Chen and M. Lesch \cite{BCLZ}. Now that gap is bridged, a full general spectral flow
formula is obtained in \cite{BoZh10b} and the relevance of weak symplectic functional analysis has
become sufficiently clear for a regular publication of our results.

%
%
%
%


\section{Weak symplectic functional analysis}\label{s:weak}

\subsection{Basic symplectic functional analysis}\label{ss:basic-symplectic}
We fix our notation. To keep track of the required assumptions, we shall not always assume that the
underlying space is a Hilbert space but permit Banach spaces and --- for some concepts --- even
just vector spaces. For easier presentation and greater generality, we begin with {\em complex}
symplectic spaces.

\begin{definition}\label{d:symplectic-space}
Let $X$ be a complex Banach space. A mapping
\[
  \w\colon X\times X\too \C
\]
is called a ({\em weak}) {\em symplectic form} on $X$, if it is sesquilinear, bounded,
skew-symmetric, and non-degenerate, i.e.,

\noi (i) $\w(x,y)$ is linear in $x$ and conjugate linear in $y$;

\noi (ii) $|\w(x,y)| \leq C\|x\| \|y\|$ for all $x,y\in X$;

\noi (iii) $\w(y,x)\ =\ -\ol{\w(x,y)}$;

\noi (iv) $X^{\w} \ :=\  \{x\in X \mid \w(x,y)\ =\ 0\text{ for all $y\in X$}\} \ =\  \{0\}$.

\noi Then we call $(X,\w)$ a {\em (weak) symplectic Banach space}.
\end{definition}

There is a purely algebraic concept, as well.
\begin{definition}\label{d:algebraic-symplectic-space}
Let $X$ be a complex vector space and $\w$ a form which satisfies
all the assumptions of Definition \ref{d:symplectic-space} except
(ii). Then we call $(X,\w)$ a {\em complex symplectic vector space}.
\end{definition}

\begin{definition}\label{d:lagrangian}
Let $(X,\w)$ be a complex symplectic vector space.

\noi (a) The {\em annihilator} of a subspace ${\la}$ of $X$ is
defined by
\[
{\la}^{\w} \ :=\  \{y\in X \mid \w(x,y)\ =\ 0 \quad\text{ for all $x\in {\la}$}\} .
\]

\noi (b) A subspace ${\la}$ is called {\em symplectic}, {\em
isotropic}, {\em co-isotropic}, or {\em Lagrangian} if
\[
\la\cap{\la}^{\w}\ =\ \{0\}\,,\quad{\la} \,\<\, {\la}^{\w}\,,\quad
{\la}\,\>\, {\la}^{\w}\,,\quad {\la}\,\ =\ \, {\la}^{\w}\,,
\]
respectively.

\noi (c) The {\em Lagrangian Grassmannian} $\Ll(X,\w)$ consists of
all Lagrangian subspaces of $(X,\w)$.
\end{definition}

\begin{definition}\label{d:symplectic-splitting}
Let $(X,\w)$ be a symplectic vector space and $X^+,X^-$ be linear
subspaces. We call $(X,X^+,X^-)$ a {\em symplectic splitting} of
$X$, if $X=X^+ \oplus X^-$, the quadratic form $-i\w$ is positive
definite on $X^+$ and negative definite on $X^-$, and
\begin{equation}\label{e:h-transversality}
\w(x,y)\ =\ 0 \quad\text{ for all $x\in X^+$ and $y\in X^-$}\,.
\end{equation}
\end{definition}

\begin{remark}\label{r:lagrangian}
(a) By definition, each one-dimensional subspace in real symplectic space is isotropic, and there
always exists a Lagrangian subspace. However, there are complex symplectic Hilbert spaces without
any Lagrangian subspace. That is, in particular, the case if $\dim X^+\ne \dim X^-$ in $\NN\cup
\{\infty\}$ for a single (and hence for all)
symplectic splittings.

\noi (b) If $\dim X$ is finite, a subspace ${\la}$ is Lagrangian if
and only if it is isotropic with $\dim {\la} = \12 \dim X$.

\noi (c) In symplectic Banach spaces, the annihilator ${\la}^{\w}$ is closed for any subspace
$\la$. In particular, all Lagrangian subspaces are closed, and we have for any subspace $\la$ the
inclusion
\begin{equation}\label{e:double-annih}
{\la}^{\w\w} \> \ol{\la}.
\end{equation}

\noi (d) Let $X$ be a vector space and denote its (algebraic) dual space by $X'$. Then each
symplectic form $\w$ induces a uniquely defined injective mapping $J\colon X\to X'$ such that
\begin{equation}\label{e:almost-complex}
\w(x,y) \ =\  (Jx,y) \quad\text{ for all $x,y\in X$},
\end{equation}
where we set $(Jx,y):=(Jx)(y)$.

If $(X,\w)$ is a symplectic Banach space, then the induced mapping $J$ is a bounded, injective
mapping $J\colon X\to X^*$ where $X^*$ denotes the (topological) dual space. If $J$ is also
surjective (so, invertible), the pair $(X,\w)$ is called a {\em strong symplectic Banach space}. As
mentioned in the Introduction, we have taken the distinction between {\em weak} and {\em strong}
symplectic structures from Chernoff and Marsden \cite[Section 1.2, pp. 4-5]{ChMa74}.

If $X$ is a Hilbert space with symplectic form $\w$, we identify $X$ and $X^*$. Then the induced
mapping $J$ is a bounded, skew-self-adjoint operator (i.e., $J^* =-J$) on $X$ with $\ker J=\{0\}$
and can be written in the form $J=\begin{pmatrix} iA_+ & 0\\ 0&-iA_-\end{pmatrix}$ with $A_\pm > 0$
bounded self-adjoint (but not necessarily invertible, i.e., $A_\pm\ii$ not necessarily bounded). As
in the strong symplectic case, we then have that $\la\< X$ is Lagrangian if and only if
$\la^\perp=J\la$\,.


\end{remark}


The proof of the following lemma is straightforward and is omitted.

\begin{lemma}\label{l:weak-strong}
Any strong symplectic Hilbert space $(X,\lla\cdot,\cdot\rra,\w)$
(i.e., with invertible $J$) can be made into a strong symplectic
Hilbert space $(X,\lla\cdot,\cdot\rra',\w)$ with $J'^2=-I$ by smooth
deformation of the inner product of $X$ into
\[
\lla x,y\rra'\ :=\  \lla \sqrt{J^*J}x,y\rra
\]
without changing $\w$.
\end{lemma}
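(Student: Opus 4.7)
The plan is straightforward: exhibit the new operator $J'$ explicitly as $J'=P^{-1}J$, where $P:=\sqrt{J^*J}$, and verify all the claimed properties.

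First I would set up the objects. Under the strong symplectic assumption, $J$ is bounded, skew-self-adjoint ($J^*=-J$) and invertible, so $J^*J=-J^2$ is a positive, self-adjoint, boundedly invertible operator on $X$. Define $P:=\sqrt{J^*J}=\sqrt{-J^2}$; by continuous functional calculus, $P$ is positive, self-adjoint and boundedly invertible. I would then verify that $\langle x,y\rangle':=\langle Px,y\rangle$ is a genuine Hermitian inner product: sesquilinearity is clear; the identity $\langle Px,y\rangle=\overline{\langle Py,x\rangle}$ uses $P=P^*$; and $\langle Px,x\rangle>0$ for $x\neq 0$ uses $P>0$. Since $P$ and $P^{-1}$ are bounded, there exist constants $c,C>0$ with $c\|x\|^2\leq\|x\|'^2\leq C\|x\|^2$, so $(X,\langle\cdot,\cdot\rangle')$ is again a Hilbert space, topologically equal to $X$. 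In particular $\omega$ is still bounded and non-degenerate on this new Hilbert space, so $(X,\langle\cdot,\cdot\rangle',\omega)$ remains a symplectic Hilbert space with $\omega$ unchanged.

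Next I would identify $J'$. The defining relation $\omega(x,y)=\langle J'x,y\rangle'=\langle PJ'x,y\rangle$ combined with $\omega(x,y)=\langle Jx,y\rangle$ forces $PJ'=J$, i.e., $J'=P^{-1}J$. Strong symplecticity with respect to the new inner product is then automatic: $J'$ is bounded and invertible (both $P^{-1}$ and $J$ are), and the skew-self-adjointness $(J')^{*'}=-J'$ with respect to $\langle\cdot,\cdot\rangle'$ follows directly from skew-symmetry of $\omega$.

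The one delicate step — the main obstacle, though still standard — is showing $J'^2=-I$. Computing gives $J'^2=P^{-1}JP^{-1}J$, so I need $JP^{-1}=P^{-1}J$, equivalently $JP=PJ$. This is where continuous functional calculus enters: $J$ commutes with $P^2=-J^2$ trivially, hence $J$ commutes with every element of the abelian von Neumann algebra generated by $P^2$; in particular it commutes with the unique positive square root $P=\sqrt{P^2}$. (Equivalently, $P$ lies in the bicommutant of $\{P^2\}$.) Granted this,
\begin{equation*}
J'^2 \;=\; P^{-1}JP^{-1}J \;=\; P^{-2}J^2 \;=\; (-J^2)^{-1}J^2 \;=\; -I,
\end{equation*}
which completes the proof. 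The word ``smooth deformation'' in the statement is justified because the family $\langle x,y\rangle_t:=\langle((1-t)I+tP)x,y\rangle$, $t\in[0,1]$, interpolates between $\langle\cdot,\cdot\rangle$ and $\langle\cdot,\cdot\rangle'$ through equivalent inner products (the operator $(1-t)I+tP$ is positive and invertible for every $t\in[0,1]$).
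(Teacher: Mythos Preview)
Your proof is correct and complete; the paper itself omits the proof entirely, stating only that it ``is straightforward and is omitted.'' Your explicit computation---identifying $J'=P^{-1}J$ with $P=\sqrt{J^*J}$, using that $J$ commutes with $P$ via functional calculus, and exhibiting the deformation $(1-t)I+tP$---fills in exactly the details the authors left to the reader.
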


\begin{remark}\label{r:weak-strong}
(a) In a strong symplectic Hilbert space many calculations become quite easy. E.g., the inclusion
\eqref{e:double-annih} becomes an equality,  and all Fredholm pairs of Lagrangian subspaces have
vanishing index, see below Definition \ref{d:fredholm-pair}, Equations
\eqref{e:fp-alg}-\eqref{e:fp}.

\noi (b) From the Introduction, we recall an important example of a weak symplectic
Hilbert space: Let $A$ be a formally self-adjoint linear elliptic
differential operators of first order over a smooth compact
Riemannian manifold $M$ with boundary $\Si$. As mentioned in the
Introduction, we have (we suppress mentioning the vector bundle)
\begin{equation}\label{e:symp-honehalf}
H^{1/2}(\Si) \simeq H^1(M)/H^1_0(M)
\end{equation}
with uniformly equivalent norms. Green's form yields a strong
symplectic structure on $L^2(\Si)$ by
\begin{equation}\label{e:symp-l2}
\{x,y\} \ :=\  -\lla Jx,y\rra_{L^2(\Si)}\,.
\end{equation}
Here $J$ denotes the principal symbol of the operator $A$ over the
boundary in inner normal direction. It is invertible since $A$ is
elliptic. For the induced symplectic structure on $H^{1/2}(\Si)$  we
define $J'$ by
\[
\{x,y\}  \ =\  -\lla J'x,y\rra_{H^{1/2}(\Si)}  \quad\text{ for
$x,y\in H^{1/2}(\Si)$}.
\]
Let $B$ be a formally self-adjoint elliptic operator $B$ of first
order on $\Si$. By G{\aa}rding's inequality, the $H^{1/2}$ norm is
equivalent to the induced graph norm. 
This yields $J'=(I+|B|)\ii J$.
Since $B$ is elliptic, it has compact resolvent. So, $(I+|B|)\ii$ is
compact in $L^2(\Si)$; and so is $J'$. Hence $J'$ is not invertible.
In the same way, any dense subspace of $L^2(\Si)$ inherits a weak
symplectic structure from $L^2(\Si)$.

\noi (c) Each weak symplectic Hilbert space $(X,\lla\cdot,\cdot\rra,\w)$ with induced injective
skew-self-adjoint $J$ can naturally be embedded in a strong symplectic Hilbert space
$\bigl(X',\lla\cdot,\cdot\rra',\w '\bigr)$ with invertible induced $J'$ by setting $\lla x,y\rra
':= \lla|J| x,y\rra$ as in Lemma \ref{l:weak-strong} and then completing the space. This imitates
the situation of the embedding of $H^{1/2}(\Si)$ into $L^2(\Si)$\,. It shows that the weak
symplectic Hilbert space $H^{1/2}(\Si)$ with its embedding into $L^2(\Si)$ yields a model for all
weak symplectic Hilbert spaces. In Section \ref{ss:embedding}, we shall elaborate on the embedding
weak $\into$ strong a little further.

\end{remark}

\medskip
The following lemma is a key result in symplectic analysis. The representation of Lagrangian
subspaces as graphs of unitary mappings from one component $X^+$ to the complementary component
$X^-$ of the underlying symplectic vector space (to be considered as the induced complex space in
classical real symplectic analysis, see, e.g., K. Furutani and the first author \cite[Section
1.1]{BoFu98}) goes back to J. Leray \cite{Le78}. We give a simplification for complex vector
spaces, first announced in \cite{Zh01}. Of course, the main ideas were already contained in the
real case. The Lemma is essentially well-known and will be obtained in the more general setting
below: (i) is clear; (ii) will follow from Lemma \ref{l:lagrangian-representation}; and (iii) from
Proposition \ref{p:fp-characterization}.

\begin{lemma}\label{l:strong-symplectic}
Let $(X,\w)$ be a strong symplectic Hilbert space with $J^2=-I$.
Then
\begin{enumerate}
\item
the space $X$ splits into the direct sum of mutually orthogonal
closed subspaces
\[
X\ =\ \ker (J-iI)\oplus\ker(J+iI),
\]
which are both invariant under $J$;

\item there is a 1-1 correspondence between the space $\Uu^J$ of unitary
operators from $\ker(J-iI)$ to $\ker(J+iI)$ and $\Ll(X,\w)$ under the mapping $U\mapsto {\la}:=
\GGG(U)$ (= graph of $U$);

\item if $U,{V}\in\Uu^J$ and ${\la}:=\GGG(U)$, $\mu:=\GGG({V})$, then
$({\la},\mu)$ is a Fredholm pair (see Definition
\ref{d:fredholm-pair}b)
 if and only if $U-V$, or, equivalently,
$UV\ii-I_{\ker(J+iI)}$ is Fredholm. Moreover, we have a natural
isomorphism
\begin{equation}\label{e:unitary-counting}
\ker(UV\ii-I_{\ker(J+iI)}) \simeq {\la}\cap \mu\,.
\end{equation}
\end{enumerate}

\end{lemma}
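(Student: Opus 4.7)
The plan is to exploit the fact that $J^2 = -I$ together with skew-self-adjointness $J^* = -J$ makes $J$ a unitary operator with spectrum $\{+i,-i\}$, turning (i) into a direct spectral decomposition, (ii) into graph-of-unitary bookkeeping in the style of Leray, and (iii) into a linear-algebra matching between kernels and cokernels.

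For (i), I would set $x^\pm := \12(x \mp iJx)$ and check $Jx^\pm = \pm i x^\pm$; this gives $X = X^+ + X^-$ and makes $J$-invariance of each summand automatic. Orthogonality then comes from comparing the two expressions $\lla Jx,y\rra = i\lla x,y\rra$ and $\lla Jx,y\rra = \lla x,-Jy\rra = -i\lla x,y\rra$ for $x \in X^+$, $y \in X^-$, which forces $\lla x,y\rra = 0$.

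For (ii), in the ``easy'' direction I would take a unitary $U\colon X^+ \to X^-$, write elements of $\GGG(U)$ as $x + Ux$, and expand $\w(x_1 + Ux_1,x_2 + Ux_2)$ using $J(x + Ux) = ix - iUx$ together with $X^+ \perp X^-$; the cross terms vanish and the expression collapses to $i\lla (I - U^*U)x_1,x_2\rra$, so the graph is isotropic iff $U^*U = I_{X^+}$. A parallel computation identifies $\GGG(U)^\w$ with $\GGG(U^*)$, so $\GGG(U)$ is Lagrangian iff $U$ is a unitary isomorphism. In the converse direction, given $\la \in \Ll(X,\w)$, I would prove that $P^+|_\la\colon \la \to X^+$ is a bijection and then set $U := P^-|_\la \circ (P^+|_\la)\ii$. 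Injectivity comes from $\ell \in \la \cap X^-$ forcing $0 = \w(\ell,\ell) = -i\|\ell\|^2$. Surjectivity uses Remark \ref{r:lagrangian}(d), $\la^\perp = J\la$: one decomposes $z^+ \in X^+$ as $\ell_1 + J\ell_2$ with $\ell_j \in \la$, and, using that $J = iI$ on $X^+$ and that $\la$ is a complex subspace, rewrites $z^+ = P^+(\ell_1 + i\ell_2)$. Continuity of $U$ is then immediate from the open mapping theorem, and the isotropy identity above together with the symmetric argument on $P^-|_\la$ gives unitarity.

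For (iii), elements of $\la \cap \mu$ have the form $x + Ux = y + Vy$ with $x = y \in \ker(U - V)$, so $\la \cap \mu \simeq \ker(U - V)$ via $x + Ux \leftrightarrow x$; since $V$ is invertible and $UV\ii - I = (U - V)V\ii$, left multiplication by $V$ provides the bijection $\ker(U - V) \to \ker(UV\ii - I)$ required by \eqref{e:unitary-counting}. For the Fredholm property I would introduce the bounded surjection $Q\colon X \to X^-$, $Q(a + b) := b - Va$, which has kernel $\mu$ and satisfies $Q(\la) = (U - V)X^+$; this yields $X/(\la + \mu) \simeq X^-/(U - V)X^+$, so $(\la,\mu)$ is a Fredholm pair iff $U - V$, equivalently $UV\ii - I$, is a Fredholm operator.

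The main obstacle is the surjectivity of $P^+|_\la$ in the converse direction of (ii): it leans crucially on $\la^\perp = J\la$ and the orthogonal splitting $X = \la \oplus J\la$, neither of which survives in the weak symplectic setting. This is exactly the difficulty that drives the remainder of the paper to develop substantially different techniques for the general case.
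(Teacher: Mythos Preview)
Your argument is correct. The paper, however, does not prove Lemma~\ref{l:strong-symplectic} directly: it declares (i) clear and derives (ii) and (iii) as specializations of the more general weak-symplectic results Lemma~\ref{l:lagrangian-representation} and Proposition~\ref{p:fp-characterization}, proved later. So the two routes differ in emphasis rather than in substance.

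The main technical divergence is in the surjectivity of $P^+|_\la$ for (ii). You use the orthogonal decomposition $X=\la\oplus J\la$ coming from $\la^\perp=J\la$, which is available only in the strong Hilbert case and gives a quick, self-contained argument. The paper instead argues (in Lemma~\ref{l:lagrangian-representation}c) via the inner product $-i\w|_{X^+}$: closedness of $\dom(U)$ from a Cauchy-sequence argument, density from the Lagrangian condition, hence $\dom(U)=X^+$. That detour is what survives (partially) in the weak setting and is therefore what the paper needs; your argument is shorter but, as you correctly observe at the end, does not generalize. For (iii), your map $Q(a+b)=b-Va$ is essentially the projection $\Pi_-$ along $\mu$ onto $X^-$ used in the proof of Proposition~\ref{p:fp-characterization}, so here the two approaches coincide.

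One notational quibble: writing $\GGG(U)^\w=\GGG(U^*)$ is slightly off, since $U^*\colon X^-\to X^+$ runs the wrong way; what you actually compute is $\GGG(U)^\w=\{U^*y+y\mid y\in X^-\}$, and equality with $\GGG(U)$ then forces $UU^*=I_{X^-}$. The conclusion is unaffected.
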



The preceding method to characterize Lagrangian subspaces and to
determine the dimension of the intersection of a Fredholm pair of
Lagrangian subspaces provides the basis for defining the Maslov
index in strong symplectic spaces of infinite dimensions (see, in
different formulations and different settings, the quoted references
\cite{BoFu98}, \cite{BoFuOt01}, \cite{FuOt02}, \cite{KiLe00}, and
Zhu and Long \cite{ZhLo99}).

Surprisingly, it can be generalized to weak symplectic Banach spaces
in the following way.

\begin{lemma}\label{l:lagrangian-representation}
Let $(X,\w)$ be a symplectic vector space with a symplectic
splitting $(X,X^+,X^-)$.

\noi (a) Each isotropic subspace ${\la}$ can be written as the graph
\[
{\la}\ =\ \GGG(U)
\]
of a uniquely determined injective operator
\[
U\colon \dom(U) \too X^-
\]
with $\dom(U) \< X^+$\,. Moreover, we have
\begin{equation}\label{e:almost-unitary}
\w(x,y) \ =\  -\w(Ux,Uy) \quad\text{ for all $x,y\in\dom(U)$}.
\end{equation}

\noi (b) If $X$ is a Banach space, then $X^{\pm}$ are always closed and the operator $U$ defined by
a Lagrangian subspace $\la$ is closed as an operator from $X^+$ to $X^-$ (not necessarily densely
defined).

\noi (c) For a closed isotropic subspace $\la$ in a strong symplectic Banach space $X$, we have
$\dom(U)$ and $\ran U$ are closed. Moreover, if $\la$ is  Lagrangian, then $\dom(U)=X^+$ and $\ran
U =X^-$; i.e., the generating $U$ is bounded and surjective with bounded inverse.
\end{lemma}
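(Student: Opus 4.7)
My plan for part (a) is to define $U$ via the splitting: for $x\in\la$, write $x=x^++x^-$ with $x^\pm\in X^\pm$, and set $\dom(U):=\{x^+:x\in\la\}$, $U(x^+):=x^-$. The key step is to show this is well-defined and injective, and for both I would use that $\la\cap X^\pm=\{0\}$. Indeed, if $w\in\la\cap X^-$, then by isotropy $\w(w,w)=0$, but $-i\w$ is negative definite on $X^-$, forcing $w=0$; similarly on $X^+$. To derive the identity $\w(x,y)=-\w(Ux,Uy)$, I would expand $0=\w(x+Ux,\,y+Uy)$ using sesquilinearity; the two cross terms vanish because the splitting condition together with skew-symmetry kills $\w$ between $X^+$ and $X^-$ in either order, leaving $\w(x,y)+\w(Ux,Uy)=0$.

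For part (b), to prove $X^+$ is closed take $x_n\in X^+$ with $x_n\to x\in X$; by boundedness of $\w$, $\w(x,y)=\lim\w(x_n,y)=0$ for all $y\in X^-$. Writing $x=x^++x^-$ and using $\w(x^+,X^-)=0$ gives $\w(x^-,y)=0$ for all $y\in X^-$; choosing $y=x^-$ and invoking negative definiteness of $-i\w$ on $X^-$ forces $x^-=0$, so $x\in X^+$. The argument for $X^-$ is symmetric. The closedness of $U$ for a Lagrangian $\la$ is then automatic: $\la=\la^\w$ is closed in a Banach symplectic space (annihilators are always closed), and the graph of $U$ is exactly $\la\cap(X^+\oplus X^-)=\la$, so the closedness of the graph is inherited.

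For part (c) the main work is to upgrade $X^\pm$ to Hilbert spaces with the forms $\langle\cdot,\cdot\rangle_\pm:=\mp i\w(\cdot,\cdot)$, with norms equivalent to the restricted Banach norms. The splitting condition means $J(X^+)\subset(X^-)^{\circ}$ (the annihilator in $X^*$), which under the isomorphism $X^*\cong(X^+)^*\oplus(X^-)^*$ identifies with $(X^+)^*$. Since strong symplecticity gives $J$ invertible, this restriction $J|_{X^+}\colon X^+\to(X^+)^*$ is an isomorphism of Banach spaces, so $-i\w(x,x)^{1/2}$ is equivalent to $\|x\|$ on $X^+$; the analogous statement holds for $X^-$. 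The identity of part (a) then reads $\langle x,y\rangle_+=\langle Ux,Uy\rangle_-$, so $U$ is an isometry between Hilbert spaces. This immediately gives the closedness of $\dom(U)$ (Cauchy in $\dom(U)$ implies Cauchy image, closedness of $U$ from (b) closes up the limits) and of $\ran U$.

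To finish (c) under the Lagrangian assumption, I would argue by contradiction. Suppose $\dom(U)\subsetneq X^+$; by the Hilbert structure take $y_0\in X^+\setminus\{0\}$ orthogonal to $\dom(U)$ in $\langle\cdot,\cdot\rangle_+$. For any $z=x+Ux\in\la$, the splitting kills $\w(Ux,y_0)$ and $\langle x,y_0\rangle_+=0$ kills $\w(x,y_0)$, so $\w(z,y_0)=0$, i.e.\ $y_0\in\la^\w=\la$. Then $y_0=x'+Ux'$ with $x'\in\dom(U)$; but $y_0\in X^+$ forces $Ux'=0$, and injectivity of the isometry $U$ gives $x'=0$, hence $y_0=0$, a contradiction. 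Thus $\dom(U)=X^+$; the same argument with the roles of $X^+$ and $X^-$ interchanged (applied to $U^{-1}$) yields $\ran U=X^-$. Boundedness of $U$ and $U^{-1}$ follow from the closed graph theorem. I expect the main obstacle to be the construction of the equivalent Hilbert structures on $X^\pm$ in (c), since it is here that the strong symplectic assumption enters essentially and the reasoning cannot be carried out in the purely Banach setting of (a) and (b).
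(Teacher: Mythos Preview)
Your argument is correct and follows essentially the same route as the paper's proof: the definiteness of $\mp i\w$ on $X^\pm$ gives well-definedness and injectivity in (a), closedness of $X^\pm$ and of $U$ in (b) (the paper phrases this as $X^\pm=(X^\mp)^\w$, but your sequential argument is equivalent), and the orthogonal-complement contradiction finishes the Lagrangian case in (c). Two minor remarks: in (c) you cite (b) for the closedness of $U$, but (b) as stated covers only Lagrangian $\la$, so for merely closed isotropic $\la$ you should note directly that $\GGG(U)=\la$ is closed by hypothesis; and your explicit verification that the $\mp i\w$-norms on $X^\pm$ are equivalent to the ambient Banach norm (via the isomorphism $J|_{X^+}\colon X^+\to (X^+)^*$ and Cauchy--Schwarz) actually spells out a step the paper compresses into the phrase ``since $X$ is strong, we see from \eqref{e:almost-unitary} that $\{Ux_n\}$ is Cauchy''.
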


\begin{proof} {\it a}. Let $\la\< X$ be isotropic and $v_++v_-, w_++w_- \in {\la}$
with $v_\pm,w_\pm\in X^\pm$\,. By the isotropic property of ${\la}$
and our assumption about the splitting $X=X^+\oplus X^-$ we have
\begin{equation}\label{e:symplectic-splitting}
0\ =\ \w(v_++v_-,w_++w_-)\ =\ \w(v_+,w_+)+\w(v_-,w_-).
\end{equation}
In particular, we have
\[
\w(v_++v_-,v_++v_-) \ =\  \w(v_+,v_+)+\w(v_-,v_-)\ =\ 0
\]
and so $v_-=0$ if and only if $v_+=0$. So, if the first (respectively the
second) components of two points $v_++v_-,w_++w_-\in {\la}$
coincide, then also the second (respectively the first) components must
coincide.

Now we set
\[
\dom(U) \ :=\  \{x\in X^+\mid \exists{y\in X^-} \text{ such that $x+y\in \la$}\}.
\]
By the preceding argument, $y$ is uniquely determined, and we can define $Ux:=y$. By construction,
the operator $U$ is an injective linear mapping, and property \eqref{e:almost-unitary} follows from
\eqref{e:symplectic-splitting}.

\noi {\it b}. By Definition \ref{d:symplectic-splitting} of a symplectic splitting, Equation
\eqref{e:h-transversality} we have $X^-\< (X^+)^\w$\,. Now let $x_++x_-\in (X^+)^{\w}$ with
$x_{\pm}\in X^{\pm}$. Then $\w(x_++x_-,x_+)=\w(x_+,x_+)=0 \iff x_+=0$ since $-i\w$ is positive
definite on $X^+$. That proves $X^-=(X^+)^{\w}$, and correspondingly $X^+=(X^-)^{\w}$. As noticed
in Remark \ref{r:lagrangian}c, annihilators are always closed. This proves the first part of ({\it
b}). Now let ${\la}$ be a Lagrangian subspace and let $U$ be the uniquely determined injective
operator $U\colon \dom(U)\to X^-$ with $\dom(U)\< X^+$ and $\GGG(U)=\la$. By Definition
\ref{d:lagrangian}b we have ${\la}={\la}^\w$, hence ${\la}$ is closed as an annihilator and so is
the graph of $U$, i.e., $U$ is closed.

\noi {\it c}. Let $\la=\GGG(U)$. Let $\{x_n\}$ be a sequence in $\dom (U)$ convergent to $x\in
X^+$. Since $X$ is strong, we see from \eqref{e:almost-unitary} that the sequence $\{Ux_n\}$ is a
Cauchy sequence and therefore is also convergent. Denote by $y$ the limit of $\{Ux_n\}$. Since
$\la$ is closed, we have $x\in\dom U$ and $y=Ux$. Thus $\dom(U)$ is closed. We apply the same
argument to $\dom(U\ii)\<X^-$, relative to the inner product $i\w$ and obtain that $\ran U$ is
closed. This proves the first part of ({\it c}).

Now assume that ${\la}$ is a Lagrangian subspace. Firstly we show that $U$ is densely defined in
$X^+$\,. Indeed, if $\ol{\dom (U)} \neq X^+$\/, there would be a $v\in V$, $v\ne 0$, where $V$
denotes the orthogonal complement of $\dom (U)$ in $X^+$ with respect to the inner product on $X^+$
defined by $-i\w$. Clearly $(\dom (U))^\w =V+X^-$\,. So, $V=(\dom (U))^\w \cap X^+$\,. Then $v+0\in
{\la}^\w\setminus {\la}$. That contradicts the Lagrangian property of ${\la}$. So, we have
$\ol{\dom (U)} = X^+$\,.

We have shown that $\dom(U)$ is closed and dense. Hence $\dom(U)=X^+$. Now the boundedness of $U$
follows from the closedness of $\GGG(U)$. Applying the same arguments to $\dom(U\ii)\<X^-$ relative
to the inner product $i\w$ yields $\ran U =\dom(U\ii)= X^-$ and $U\ii$ is bounded.
\end{proof}

\begin{remark}\label{r:splitting}
(a) Note that the symplectic splitting is not unique. Its existence can be proved by Zorn's Lemma.
In our applications, the geometric background provides natural splittings. Let $A$ be an elliptic
differential operator of first order, acting on sections of a Hermitian vector bundle $E$ over the
Riemannian manifold $M$ with boundary $\Si$. Then the symplectic Hilbert space structures of
$L^2(\Si;E|_{\Si})$ and $H^{1/2}(\Si;E|_{\Si})$ of \eqref{e:symp-l2} and \eqref{e:symp-honehalf}
are compatible and their symplectic splitting is defined by the bundle endomorphism (the principal
symbol of $A$ in inner normal direction) $J\colon E|_{\Si}\to E|_{\Si}$ in the following way:
\begin{multline}\label{e:hpm-in-h12}
H^\pm \ :=\  H^{1/2}(\Si;E^\pm|_{\Si}) \quad\tand\quad L^\pm \ :=\ L^2(\Si;E^\pm|_{\Si})
\\ \text{ with }
E^\pm|_{\Si}\ :=\  \text{ lin. span of}\left\{\begin{array}{l}\text{positive}\\ \text{negative}
\end{array} \right\} \text{ eigenspaces of $iJ$}.
\end{multline}
Note that $L^+, L^-$ change continuously if $J$ changes continuously. For varying splittings see
also the discussion below in Section \ref{s:maslov}.

\noi (b) The symplectic splitting and the corresponding {\em graph}
representation of isotropic and Lagrangian subspaces must be
distinguished from the splitting in complementary Lagrangian
subspaces which yields the common representation of Lagrangian
subspaces as {\em images} in the real category (see Lemma
\ref{l:mas-bofu=mas-zhu} below).
\end{remark}

\subsection{Fredholm pairs of Lagrangian subspaces}\label{ss:fredholm-pairs}
A main feature of symplectic analysis is the study of the {\em
Maslov index}. It is an intersection index between a path of
Lagrangian subspaces with the {\em Maslov cycle}, or, more
generally, with another path of Lagrangian subspaces.

Before giving a rigorous definition of the Maslov index in weak
symplectic functional analysis (see below Section
\ref{s:maslov}) we fix the terminology and give several simple
criteria for a pair of isotropic subspaces to be Lagrangian.

We recall:

\begin{definition}\label{d:fredholm-pair}
(a) The space of (algebraic) \emph{Fredholm pairs} of linear
subspaces of a vector space $X$ is defined by
\begin{equation}\label{e:fp-alg}
\Ff^{2}_{\operatorname{alg}}(X)\ :=\ \{({\lambda},{\mu})\mid  \dim \lambda\cap\mu  <+\infty \text{
and $\dim X/(\lambda+\mu)<+\infty$}\}
\end{equation}
with
\begin{equation}\label{e:fp-index}
\index(\la,\mu)\ :=\ \dim\la\cap\mu - \dim X/(\la+\mu).
\end{equation}

\noi (b) In a Banach space $X$, the space of (topological)
\emph{Fredholm pairs} is defined by
\begin{equation}\label{e:fp}
\Ff^{2}(X)\ :=\ \{({\lambda},{\mu})\in\Ff^2_{\operatorname{alg}}(X)\mid {\lambda},{\mu}
\tand{\lambda}+{\mu} \subset X \text{ closed}\}.
\end{equation}
\end{definition}

\begin{remark}\label{r:redholm-pairs}
Actually, in Banach spaces the closedness of $\la+\mu$ follows from its finite codimension in $X$
in combination with the closedness of $\la,\mu$ (see \cite[Remark A.1]{BoFu99} and \cite[Problem
4.4.7]{Ka76}). So, the set of algebraic Fredholm pairs of Lagrangian subspaces of a symplectic
Banach space $X$ coincides with the set $\Ff\Ll^2(X)$ of topological Fredholm pairs of Lagrangian
subspaces of $X$.
\end{remark}

We begin with a simple algebraic observation.

\begin{lemma}\label{l:isotropic-sum-to-lagrangian}
Let $(X,\w)$ be a symplectic vector space with transversal subspaces
$\la,\mu$\,. If $\la,\mu$ are isotropic subspaces, then they are
Lagrangian subspaces.
\end{lemma}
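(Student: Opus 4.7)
The plan is to exploit the direct sum decomposition $X = \lambda \oplus \mu$ afforded by transversality (i.e., $\lambda \cap \mu = \{0\}$ and $\lambda + \mu = X$) in order to pin down the annihilator $\lambda^{\omega}$, and then to use the non-degeneracy of $\omega$ to conclude. By symmetry the same argument will handle $\mu$, so it suffices to prove $\lambda = \lambda^\omega$.

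First I would take an arbitrary $x \in \lambda^\omega$ and, using transversality, write $x = a + b$ uniquely with $a \in \lambda$ and $b \in \mu$. The goal then reduces to showing $b = 0$, since that would give $\lambda^\omega \subseteq \lambda$; combined with the hypothesis $\lambda \subseteq \lambda^\omega$, this yields the Lagrangian property.

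Next I would verify that $b$ annihilates all of $X$ under $\omega$. Indeed, pick any $y \in X$ and decompose $y = a' + b'$ with $a' \in \lambda$, $b' \in \mu$. Then
\begin{equation*}
\omega(b, y) \;=\; \omega(b, a') + \omega(b, b').
\end{equation*}
The second term vanishes because $\mu$ is isotropic and $b, b' \in \mu$. For the first term, the assumption $x \in \lambda^\omega$ gives $\omega(a', x) = 0$, and since $a' \in \lambda$ is isotropic together with $a$, we have $\omega(a', a) = 0$, so $\omega(a', b) = 0$, i.e., $\omega(b, a') = -\overline{\omega(a', b)} = 0$. Hence $\omega(b, y) = 0$ for every $y \in X$.

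Finally I would invoke the non-degeneracy axiom (iv) of Definition \ref{d:symplectic-space} to conclude $b = 0$, so $x = a \in \lambda$, which gives $\lambda^\omega \subseteq \lambda$. The reverse inclusion is the isotropy assumption, so $\lambda = \lambda^\omega$, and $\lambda$ is Lagrangian; the argument for $\mu$ is symmetric. There is no real obstacle here — the proof is a one-line decomposition argument — but I would emphasize that the proof is purely algebraic and uses none of the Banach structure, which is consistent with the authors' stated aim of proving algebraic facts in the category of symplectic vector spaces.
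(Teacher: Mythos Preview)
Your proof is correct and follows essentially the same route as the paper's: decompose an arbitrary $x\in\la^{\w}$ along $X=\la\oplus\mu$ and show the $\mu$-component vanishes. The paper organizes this by first recording $\la^{\w}\cap\mu^{\w}=(\la+\mu)^{\w}=\{0\}$ and then observing that the $\mu$-component of $x$ lies in this intersection, whereas you verify directly that this component lies in $X^{\w}$; these are the same argument, differently phrased.
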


\begin{proof}
From linear algebra we have
\[
\la^{\w}\cap \mu^{\w} \ =\ (\la+\mu)^{\w} \ =\ \{0\},
\]
since $\la+\mu=X$. From
\begin{equation}\label{e:isotropics}
\la\<\la^{\w}, \mu\<\mu^{\w}
\end{equation}
we get
\begin{equation}\label{e:new-splitting}
X\ =\  \la^{\w} \oplus \mu^{\w}\,.
\end{equation}
To prove $\la^{\w}=\la$ (and similarly for $\mu$), we consider an $x\in\la^{\w}$\,. It can be
written in the form $x=y+z$ with $y\in\la$ and $z\in \mu$ because of the splitting $X=
\la\oplus\mu$. Applying \eqref{e:isotropics} and the splitting \eqref{e:new-splitting} we get $y=x$
and so $z=0$, hence $x\in\la$.
\end{proof}

With a little work, the preceding lemma can be generalized from direct sum decompositions to
(algebraic) Fredholm pairs. At first we have

\begin{lemma}\label{l:compare-dim} Let $V,W$ be two vector spaces
and $f\colon V\times W\to\C$ be a sesquilinear mapping. Assume that $\dim W<+\infty$. If for each
$v\in V$, the condition $f(v,w)=0$ for all $w\in W$ implies $v=0$, then we have $\dim V\,\le\,\dim
W$.
\end{lemma}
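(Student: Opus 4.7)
The plan is to exhibit a linear injection from $V$ into a space of the same (finite) dimension as $W$, namely the conjugate dual $\overline{W^{*}}$ of $W$. The injectivity is exactly what the hypothesis provides, and the finite dimensionality of $W$ is used only through the standard fact $\dim \overline{W^{*}} = \dim W$.

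More precisely, I would proceed as follows. For each $v \in V$, define
\[
\phi_v \colon W \to \mathbb{C}, \qquad \phi_v(w) := f(v,w).
\]
Since $f$ is sesquilinear with $f(v,\cdot)$ conjugate linear in the second slot (consistently with the convention fixed in Definition \ref{d:symplectic-space}), each $\phi_v$ is a conjugate linear functional on $W$, i.e.\ $\phi_v \in \overline{W^{*}}$. The assignment $v \mapsto \phi_v$ is linear in $v$ by the linearity of $f$ in its first argument, hence defines a linear map
\[
\Phi \colon V \to \overline{W^{*}}, \qquad v \mapsto \phi_v.
\]
The stated hypothesis \textemdash{} that $f(v,w)=0$ for all $w \in W$ forces $v=0$ \textemdash{} is precisely the statement that $\ker \Phi = \{0\}$, i.e.\ $\Phi$ is injective.

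Therefore $\dim V \le \dim \overline{W^{*}}$. Since $\dim W < +\infty$, the conjugate dual $\overline{W^{*}}$ has the same complex dimension as $W$ (choose a basis of $W$ and take the conjugate dual basis), so $\dim \overline{W^{*}} = \dim W$, yielding $\dim V \le \dim W$ as required. There is no real obstacle here: the only point to be careful about is the sesquilinear bookkeeping, so that one lands in the conjugate dual rather than the ordinary dual; either way, its dimension agrees with $\dim W$, so the conclusion is unaffected. If one preferred to avoid the conjugate dual altogether, one could observe that the map $v \mapsto \overline{\phi_v(\cdot)}$ sends $V$ linearly (over $\mathbb{R}$, or anti-linearly over $\mathbb{C}$) and injectively into $W^{*}$, which likewise has complex dimension $\dim W$, giving the same bound.
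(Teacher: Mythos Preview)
Your proof is correct and essentially identical to the paper's: both define the linear map $V\to\wt W$ (the space of conjugate linear functionals on $W$, which you denote $\overline{W^*}$) by $v\mapsto f(v,\cdot)$, observe that the hypothesis makes it injective, and conclude $\dim V\le\dim\wt W=\dim W$.
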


\begin{proof} Let $\wt W$ be the space of conjugate linear functionals on $W$.
Let $\wt f\colon V\to\wt W$ be the induced map of $f$ defined by $(\wt f(v))(w):=f(v,w)$. Then $\wt
f$ is linear. Our condition implies that $\wt f$ is injective. Thus we have $\dim V\,\le\,\dim\wt
W\,=\,\dim W$.
\end{proof}

\begin{corollary}\label{c:isotropic-index} Let $(X,\w)$ denote a symplectic vector
space.

\noi (a) For any finite-codimensional linear subspace $\la$, we have
$\dim\la^{\w}\,\le\,\dim X/\la$.

\noi (b) For any finite dimensional linear subspace $\mu$, we have
$\mu^{\w\w}=\mu$ and $\dim\mu=\dim X/\mu^{\w}$.
\end{corollary}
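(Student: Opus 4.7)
The plan is to apply Lemma \ref{l:compare-dim} twice, once for each part, using $\w$ as the sesquilinear form descended to appropriate quotients.

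For (a), I would take $V := \la^{\w}$ and $W := X/\la$ and define
\[
f(v, [w]) \ :=\  \w(w, v) \quad\text{for $v\in\la^{\w}$ and $w\in X$.}
\]
This is well-defined on $X/\la$: if $w - w' \in \la$, then $\w(w-w', v) = 0$ because $v \in \la^{\w}$. Sesquilinearity is inherited from $\w$. For the non-degeneracy hypothesis of Lemma \ref{l:compare-dim}, suppose $f(v, [w]) = 0$ for all $[w] \in X/\la$; then $\w(w,v) = 0$ for every $w \in X$, so $v \in X^{\w} = \{0\}$ by Definition \ref{d:symplectic-space}(iv). Since $\dim W = \dim X/\la < +\infty$ by assumption, Lemma \ref{l:compare-dim} yields $\dim \la^{\w} \le \dim X/\la$.

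For (b), I would first establish the dual inequality $\dim X/\mu^{\w} \le \dim \mu$. Apply Lemma \ref{l:compare-dim} again, this time with $V := X/\mu^{\w}$, $W := \mu$, and
\[
g([x], y) \ :=\  \w(x, y) \quad\text{for $x\in X$ and $y\in \mu$.}
\]
Well-definedness follows since $\w(\mu^{\w}, \mu) = 0$, and sesquilinearity is again inherited. If $g([x], y) = 0$ for all $y \in \mu$, then by definition $x \in \mu^{\w}$, so $[x] = 0$. Since $\dim W = \dim \mu < +\infty$, the lemma gives $\dim X/\mu^{\w} \le \dim \mu$. In particular, $\mu^{\w}$ has finite codimension, so part (a) applies with $\la := \mu^{\w}$, giving $\dim \mu^{\w\w} \le \dim X/\mu^{\w} \le \dim \mu$.

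Finally, the inclusion $\mu \< \mu^{\w\w}$ holds trivially (the inclusion \eqref{e:double-annih} specialized to the algebraic setting: any $x\in\mu$ satisfies $\w(x,y) = -\overline{\w(y,x)} = 0$ for every $y\in\mu^\w$). Combined with $\dim \mu^{\w\w} \le \dim \mu < +\infty$, this forces $\mu^{\w\w} = \mu$. The chain of inequalities $\dim \mu = \dim \mu^{\w\w} \le \dim X/\mu^{\w} \le \dim \mu$ then collapses to equality, yielding $\dim \mu = \dim X/\mu^{\w}$. There is no serious obstacle here; the only thing to watch is that the form $f$ in each case is defined with $\mu^{\w}$ (respectively $\la$) on the side that gets quotiented out, so that well-definedness and non-degeneracy both come directly from the definition of the annihilator.
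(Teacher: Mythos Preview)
Your proof is correct and follows essentially the same route as the paper: apply Lemma~\ref{l:compare-dim} with $V=\la^{\w}$, $W=X/\la$ for (a), then with $V=X/\mu^{\w}$, $W=\mu$ for (b), and combine the resulting inequalities with the trivial inclusion $\mu\subset\mu^{\w\w}$. The only cosmetic difference is in how the sesquilinear forms are written---the paper uses $f(x,y+\la):=\w(x,y)$ and $g(x+\mu^{\w},y):=\overline{\w(x,y)}$, while you swap arguments or omit the conjugation---but since Lemma~\ref{l:compare-dim} only needs injectivity of the induced map $V\to\wt W$, either convention works.
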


\begin{proof} {\it a}. Define $f\colon \la^{\w}\times (X/\la)\to\C$ by
$f(x,y+\la):=\omega(x,y)$ for all $x\in\la^{\w}$ and $y\in X$. Then $f$ satisfies the condition in
Lemma \ref{l:compare-dim}. So our result follows.

\noi {\it b}. Define $g\colon (X/\mu^{\w})\times \mu\to\C$ by
$g(x+\mu^{\w},y):=\overline{\omega(x,y)}$ for all $x,y\in\mu$. Then $g$ satisfies the condition in
Lemma \ref{l:compare-dim}. So we have $\dim X/\mu^{\w}\le\dim\mu$. By (a) we have
$\dim\mu^{\w\w}\le\dim X/\mu^{\w}$. Since $\mu\subset\mu^{\w\w}$, our result follows.
\end{proof}

\begin{proposition}\label{p:iso-to-lag}
Let $(X,\w)$ be a symplectic vector space and $(\la,\mu) \in
\Ff^2_{\operatorname{alg}}(X)$\,. If $\la,\mu$ are isotropic
subspaces with $\index(\la,\mu)\geq 0$, then $\la$ and $\mu$ are
Lagrangian subspaces of $X$,
\[
\index(\la,\mu)\ \fequal{(i)}\  0,\quad (\la+\mu)^{\w}\  \fequal{(ii)}\ \la\,\cap\,\mu,\quad\tand
(\la+\mu)^{\w\w}\  \fequal{(iii)}\ \la+\mu.
\]
\end{proposition}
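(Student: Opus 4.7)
The plan is to reduce everything to two applications of Corollary \ref{c:isotropic-index}, exploiting the fact that both $\la\cap\mu$ and $(\la+\mu)^{\w}$ are controlled by the index hypothesis. Throughout, I will freely use the identity $(\la+\mu)^\w = \la^\w \cap \mu^\w$ and the inclusions $\la\subset \la^\w$, $\mu\subset\mu^\w$ supplied by isotropy.

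First I would establish the chain of dimension inequalities
\[
\dim(\la\cap\mu) \;\le\; \dim(\la^\w\cap\mu^\w) \;=\; \dim(\la+\mu)^\w \;\le\; \dim X/(\la+\mu),
\]
where the first inequality is the isotropy inclusion $\la\cap\mu\subset \la^\w\cap\mu^\w$ and the last is Corollary \ref{c:isotropic-index}(a) applied to the finite-codimensional subspace $\la+\mu$. This gives $\index(\la,\mu)\le 0$; combined with the hypothesis $\index(\la,\mu)\ge 0$, all three dimensions coincide and are finite. Equality of the first two forces $\la\cap\mu = (\la+\mu)^\w$, which is (ii), and $\index(\la,\mu)=0$, which is (i).

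Next I would apply Corollary \ref{c:isotropic-index}(b) to the now finite-dimensional subspace $\la\cap\mu$, yielding $(\la\cap\mu)^{\w\w} = \la\cap\mu$ and $\dim(\la\cap\mu)=\dim X/(\la\cap\mu)^\w$. Combined with step one,
\[
\dim X/(\la\cap\mu)^\w \;=\; \dim X/(\la+\mu).
\]
Since $\la+\mu \subset \la^{\w\w}+\mu^{\w\w}\subset(\la\cap\mu)^\w$ and both sides have the same finite codimension in $X$, we get $(\la\cap\mu)^\w = \la+\mu$. Taking annihilators in (ii) then gives $(\la+\mu)^{\w\w} = (\la\cap\mu)^\w = \la+\mu$, which is (iii). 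As a bonus, the same equality reads $\la^\w + \mu^\w = \la+\mu$.

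Finally, to upgrade isotropic to Lagrangian, take any $x\in \la^\w$. By the previous line $x\in \la+\mu$, so write $x=y+z$ with $y\in\la\subset\la^\w$, $z\in\mu$. Then $z=x-y\in\la^\w\cap\mu$, and since $\mu\subset\mu^\w$ we have $z\in\la^\w\cap\mu^\w=\la\cap\mu\subset\la$, so $x\in\la$; symmetry gives $\mu^\w=\mu$. I expect the only subtle point to be recognizing that $(\la\cap\mu)^\w = \la+\mu$ rather than merely $\supset$, which is exactly where the finite-dimensional double-annihilator identity of Corollary \ref{c:isotropic-index}(b) interlocks with the finite-codimensional estimate of Corollary \ref{c:isotropic-index}(a); the rest is bookkeeping.
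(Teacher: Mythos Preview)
Your proof is correct. The derivation of (i), (ii), (iii) via the dimension chain and Corollary~\ref{c:isotropic-index} is essentially identical to the paper's. The difference lies in the final step, proving $\la=\la^{\w}$. The paper forms the quotient $\wt X=(\la+\mu)/(\la\cap\mu)$, verifies that the induced form is symplectic, applies Lemma~\ref{l:isotropic-sum-to-lagrangian} to the transversal isotropic images $\la/(\la\cap\mu)$ and $\mu/(\la\cap\mu)$, and then lifts the Lagrangian property back up. You bypass this entirely: once (ii) and (iii) give $\la^{\w}\subset(\la\cap\mu)^{\w}=\la+\mu$, you decompose $x=y+z$ and observe directly that $z\in\la^{\w}\cap\mu^{\w}=\la\cap\mu$. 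This is shorter and more elementary, since it never invokes Lemma~\ref{l:isotropic-sum-to-lagrangian} or the quotient space. What the paper's route buys is the explicit identification of the symplectically reduced space $(\la+\mu)/(\la\cap\mu)$, which is of independent interest; your route shows the Lagrangian conclusion follows from (ii) alone by a two-line computation.
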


\begin{proof} Set $\wt{X}:= (\la+\mu)/(\la\cap\mu)$ with the induced form
\[
\wt{\w}([x+y],[\xi+\eta])\ :=\  \w(x+y,\xi+\eta) \quad\text{ for $x,\xi\in\la \quad\tand\quad
y,\eta\in \mu$},
\]
where $[x+y]:= x+y+\la\cap\mu$ denotes the class of $x+y$ in $\frac {\la+\mu}{\la\cap\mu}$\,. The
aim is to show that $\wt X$  is a symplectic vector space. During the proof of this fact the
claimed equalities (i)-(iii) will be obtained.

Since $\la,\mu$ are isotropic, we have $\w(x+y+z,\xi+\eta+\z)=\w(x+y,\xi+\eta)$ for any $z,\z\in
\la\cap\mu$. So $\wt{\w}$ is well-defined and inherits the algebraic properties from $\w$.

To show that $(\wt{X})^{\wt{\w}}=\{0\}$, we observe
\begin{equation}\label{e:symp-alg}
(\la+\mu)^{\w}\ =\ \la^{\w}\cap\mu^{\w} \> \la\cap\mu\,.
\end{equation}
By Corollary \ref{c:isotropic-index}a, we have
\[ \dim
(\la+\mu)^{\w}\le\dim  X/(\la+\mu) \leq \dim(\la\cap\mu).
\]
Here the last inequality is just the non-negativity of the Fredholm index as defined in
\eqref{e:fp-index}. This proves (i), namely
\begin{equation}\label{e:symp-alg-mod}
\dim(\la+\mu)^{\w}\ =\ \dim X/(\la+\mu)\ =\ \dim(\la\cap\mu).
\end{equation}
Combining \eqref{e:symp-alg-mod} with \eqref{e:symp-alg} yields (ii), namely
\begin{equation}\label{e:symp-alg-modd}
\la\cap\mu\ =\ \la^{\w}\cap\mu^{\w}\ =\ (\la+\mu)^{\w}.
\end{equation}
By Corollary \ref{c:isotropic-index}b, we have
\[\dim X/(\la+\mu)\ =\ \dim\la\cap\mu\ =\ \dim X/(\la\cap\mu)^{\w}\ =\ \dim X/(\la+\mu)^{\w\w}.\]
Thus we have proved (iii), namely $\la+\mu=(\la+\mu)^{\w\w}$.

To finish our proof that $\wt\w$ is non-degenerate, one checks that
\begin{equation}
\Bigl(\frac{\la+\mu}{\la\cap\mu}\Bigr)^{\wt{\w}} \ =\
\frac{(\la+\mu)^{\w}}{\la\cap\mu}\,.
\end{equation}
With \eqref{e:symp-alg-modd} that proves that $\displaystyle
\bigl(\frac{\la+\mu}{\la\cap\mu}\bigr)^{\w}=\{0\}$, hence $\displaystyle \wt X=
\frac{\la+\mu}{\la\cap\mu}$ is a true symplectic vector space for the induced form $\wt{\w}$. It is
spanned by the transversal isotropic subspaces
\[
\frac{\la+\mu}{\la\cap\mu} \ =\  \frac{\la}{\la\cap\mu} \oplus
\frac{\mu}{\la\cap\mu}\,.
\]
By Lemma \ref{l:isotropic-sum-to-lagrangian}, the spaces $\displaystyle \frac{\la}{\la\cap\mu}\,,
\frac{\mu}{\la\cap\mu}$ are Lagrangian subspaces.

It remains to prove that $\la,\mu$ itself are Lagrangian subspaces of $X$. Clearly
$\la\<\la^{\w}\cap(\la+\mu)$. Now consider $x\in\la$ and $y\in\mu$ with $x+y\in\la^{\w}$\,. Then
\[
[x+y] \in \Bigl(\frac{\la}{\la\cap\mu}\Bigr)^{\wt{\w}} \ =\
\frac{\la}{\la\cap\mu}
\]
by the Lagrangian property of $\frac{\la}{\la\cap\mu}$\,. It follows
that $x+y\in\la$, hence
\begin{equation}\label{e:almost-lagrangian}
\la^{\w}\cap(\la+\mu) \ =\  \la \text{ and similarly }
\mu^{\w}\cap(\la+\mu) \ =\  \mu\,.
\end{equation}
Combined with the fact that
\[
\la^{\w} \< (\la\cap\mu)^{\w} \ =\ (\la+\mu)^{\w\w}\ =\  \la +\mu,
\]
the inclusion $\la\>\la^{\w}$ follows and so the Lagrangian property
of $\la$ (and similarly of $\mu$).
\end{proof}

\begin{remark}\label{r:open-problems}
For related topological (unsolved) questions see below Subsection \ref{ss:open-problems}.
\end{remark}

We close this subsection with the following characterization of
Fredholm pairs.

\begin{proposition}\label{p:fp-characterization}
Let $(X,\w)$ be a symplectic Banach space  and let $(X,X^+,X^-)$ be a symplectic splitting. Let
$\la,\mu$ be isotropic subspaces. Let $U,V$ denote the generating operators for $\la,\mu$ in the
sense of Lemma \ref{l:lagrangian-representation}. We assume that%
\begin{equation}\label{e:v-bounded-bounded-invertible}
V\colon X^+\to X^-\quad\text{is bounded and bounded invertible}.
\end{equation}
Then \newline(a) The space $\mu$ is a Lagrangian subspace of $X$.
\newline (b) Moreover,
\begin{equation*}
(\la,\mu)\in \Ff^2(X) \iff UV\ii-I_{X^-} \text{ is a Fredholm operator with domain $\,V(\dom\,
U)$}.
\end{equation*}
\newline (c) In this case, $U-V$ is a (closed, not necessarily bounded) Fredholm operator with domain
$\dom\, U$ and
\[
\index(\la,\mu)\ =\ \index(UV\ii -I_{X^-}).
\]
\newline (d) In particular, $U-V$ (and thus $UV^{-1}-I_{X^-}$) is closed if $\lambda$ is closed and $V$ is bounded (as assumed above).
\end{proposition}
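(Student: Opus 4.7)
For part (a), the plan is to exhibit a transversal isotropic complement to $\mu = \GGG(V)$ and invoke Lemma \ref{l:isotropic-sum-to-lagrangian}. Set $\mu' := \GGG(-V)$. The identity \eqref{e:almost-unitary} applied to $V$ immediately gives $\w(x - Vx, y - Vy) = \w(x,y) + \w(Vx, Vy) = 0$, so $\mu'$ is isotropic. The intersection $\mu \cap \mu' = \{0\}$ because $x + Vx = y - Vy$ splits along $X = X^+ \oplus X^-$ into $x - y = 0$ and $V(x+y) = 0$, and invertibility of $V$ forces $x = y = 0$. Surjectivity $\mu + \mu' = X$ comes from the explicit decomposition $z^+ + z^- = (x + Vx) + (y - Vy)$ with $x = \tfrac12(z^+ + V^{-1}z^-)$ and $y = \tfrac12(z^+ - V^{-1}z^-)$. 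Lemma \ref{l:isotropic-sum-to-lagrangian} then yields that $\mu$ is Lagrangian.

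For parts (b) and (c), the plan is to identify $\la\cap\mu$ and $X/(\la+\mu)$ with the kernel and cokernel of the operator $U - V$ on $\dom U \subset X^+$, and then transport these via the isomorphism $V$ to the kernel and cokernel of $UV^{-1} - I_{X^-}$. For the intersection: $x + Ux = y + Vy$ forces $x = y \in \dom U$ and $(U-V)x = 0$, so $\la \cap \mu \cong \ker(U-V)$. For the cokernel, define $p : X \to X^-/\ran(U-V)$ by $p(z^+ + z^-) := z^- - Vz^+ + \ran(U-V)$. This is surjective and vanishes on both $\la$ (since $p(x + Ux) = (U-V)x + \ran(U-V) = 0$) and $\mu$; injectivity of the induced quotient map follows from the explicit decomposition $z^+ + z^- = (x + Ux) + ((z^+ - x) + V(z^+ - x)) \in \la + \mu$ whenever $z^- - Vz^+ = (U-V)x$ for some $x \in \dom U$. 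Since $(UV^{-1} - I_{X^-})(Vx) = (U-V)x$ and $V$ is a bijection from $X^+$ onto $X^-$, the map $V \colon \ker(U-V) \to \ker(UV^{-1} - I_{X^-})$ is bijective and $\ran(U-V) = \ran(UV^{-1} - I_{X^-})$. This delivers both the equivalence $(\la,\mu) \in \Ff^2(X) \iff UV^{-1} - I_{X^-}$ Fredholm (closedness of $\la+\mu$ is automatic in Banach space via Remark \ref{r:redholm-pairs} once finite codimension is known) and the index identity.

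For part (d), the plan is the standard closed-plus-bounded argument: Lemma \ref{l:lagrangian-representation}(b) converts closedness of $\la$ into closedness of $U$; since $V \colon X^+ \to X^-$ is bounded and everywhere defined, if $x_n \in \dom U$ with $x_n \to x$ and $(U-V)x_n \to y$, then $Ux_n = (U-V)x_n + Vx_n \to y + Vx$, and closedness of $U$ gives $x \in \dom U$ with $(U-V)x = y$; the same argument with the bounded $V^{-1}$ in place of $V$ yields closedness of $UV^{-1} - I_{X^-}$. The main obstacle I anticipate is the topological bookkeeping in (b): the equivalence must absorb closedness of $\la$ (built into the definition of $\Ff^2(X)$) on one side and closedness of $UV^{-1} - I_{X^-}$ (prerequisite for Fredholmness) on the other, and the two closedness conditions must correspond, which is precisely what (d) and Lemma \ref{l:lagrangian-representation}(b) arrange. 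Once this correspondence is in place, the kernel-cokernel-index calculation itself is routine.
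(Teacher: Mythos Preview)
Your proof is correct and follows essentially the same route as the paper: part (a) is identical, and your map $z^+ + z^- \mapsto z^- - Vz^+$ is exactly the paper's projection $\Pi_-\colon X\to X^-$ along $\mu$ arising from the splitting $X=\mu\oplus X^-$, so your cokernel computation and the paper's $\la+\mu=\mu\oplus\ran(UV^{-1}-I_{X^-})$ are the same statement. The one point you leave implicit is the forward direction of the closedness bookkeeping in (b): starting from $(\la,\mu)\in\Ff^2(X)$ you must also conclude that $\ran(UV^{-1}-I_{X^-})$ is closed, and this follows because your bounded surjection $q(z^+ + z^-)=z^- - Vz^+$ has kernel $\mu$ and hence induces a topological isomorphism $X/\mu\cong X^-$ carrying the closed subspace $(\la+\mu)/\mu$ onto $\ran(U-V)=\ran(UV^{-1}-I_{X^-})$.
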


\begin{note} Our assumption \eqref{e:v-bounded-bounded-invertible} is needed for (a). For
(b) and (c) (in \eqref{e:la-plus-mu} below) it is only required that $\dom(U)\subset\dom(V)$. For
(d) we need only that $V$ is bounded.
\newline For (b), (c) and (d) recall from Definition \ref{d:fredholm-pair}b) that we require of a pair in
$\Ff^2(X)$ to consist of \textit{closed} subspaces.
\end{note}

\begin{proof} {\it a}. Since $\mu=\GGG(V)$ is an isotropic subspace of $X$,
the space $\mu':=\GGG(-V)$ is also
isotropic. We show that $\mu,\mu'$ are transversal in $X$. Then by Lemma
\ref{l:isotropic-sum-to-lagrangian}, $\mu$ (and $\mu'$) are Lagrangian subspaces. First, from the
injectivity of $V$, we have
$\mu\cap\mu'=\{0\}$.

Next, let $x+y$, or, more suggestively, $\begin{pmatrix} x\\
y\end{pmatrix}$ denote an arbitrary point in $X$ with $x\in X^+$ and $y\in X^-$\,. Since $V$ is
bounded with bounded inverse, we have $y\in \ran V$ and $z,w\in\dom V$, where
\[
z \ :=\  \frac {x+V\ii y}2 \ \tand\ w\ :=\  \frac{x-V\ii y}2\,.
\]
Then $z+w=x$ and $z-w=V\ii y$, so
\[
\begin{pmatrix} x\\ y\end{pmatrix}
\ =\  \begin{pmatrix} z\\ Vz \end{pmatrix} + \begin{pmatrix} w\\
-Vw\end{pmatrix}\,.
\]
This proves $X=\mu\oplus\mu'$\,.

\noi {\it b and c}. Let $\la=\GGG(U)$ and $\mu=\GGG(V)$ with $V$ bounded and bounded invertible.
Let $P_{\pm}$ denote the projections of $X=X^+\oplus X^-$ onto $X^{\pm}$. Then
\[
\la\cap\mu \ =\ \Bigl\{\begin{pmatrix}x \\
Vx\end{pmatrix}\mid x\in \dom(U) \,\tand\, Ux\ =\ Vx\Bigr\}.
\]
So, $P_-$ induces an algebraic and topological isomorphism between
$\la\cap\mu$ and $\ker \bigl(UV\ii - I_{X^-}\bigr)$.

Now we determine
\begin{equation}\label{e:la-plus-mu}
\begin{split}
\la+\mu & \ =\ \Bigl\{\begin{pmatrix}x \\ Ux\end{pmatrix} +
\begin{pmatrix}y
\\ Vy\end{pmatrix} \mid x\in\dom(U),y\in X^+\Bigr\}\\
&\ =\ \Bigl\{\begin{pmatrix}x' \\
Vx'\end{pmatrix} + \begin{pmatrix}0 \\ z\end{pmatrix} \mid x'\in
X^+\,\tand\, z\in \ran(UV\ii-I_{X^-})\Bigr\}\\
&\ =\  \mu \oplus\ran(UV\ii-I_{X^-}).
\end{split}
\end{equation}
The last direct sum sign comes from the invertibility of $V$: It induces $\mu\cap X^-=\{0\}$ and,
similarly, $\mu+X^-= X$. From that we obtain the direct sum decomposition $X=\mu\oplus X^-$ with
projections $\Pi_{\mu}$ and $\Pi_-$ onto the components. So, $\Pi_-$ yields an algebraic and
topological isomorphism of $\la+\mu$ onto $\ran(UV\ii-I_{X^-})$. In particular, we have $\la+\mu$
closed in $X$ if and only if $\ran(UV\ii-I_{X^-})$ is closed in $X^-$\, and
\[
X/(\la+\mu) \simeq X^-/\ran(UV\ii-I_{X^-})
\]
with coincidence of the codimensions.

\noi {\it d}. Let $x_n\to x$ and $y_n\to y$ be such that $(U-V)x_n\to y$. Since $V$ is bounded,
$Vx_n\to Vx$. Then $Ux_n\to Vx+y$. Since $\lambda$ is closed, $U$ is closed. Thus $x\in\dom(U)$ and
$Ux=Vx+y$. Hence $(U-V)x=y$.
\end{proof}


\subsection{Open topological problems}\label{ss:open-problems}

\subsubsection{Fredholm pairs of Lagrangian subspaces with negative index?}
Proposition \ref{p:iso-to-lag} shows that Fredholm pairs of Lagrangian subspaces in symplectic
vector spaces cannot have positive index. In contrast to the strong case, one may expect that we
have pairs with negative index in weak symplectic Hilbert space. By now, however, this is an open
problem.

\subsubsection{Characterization of Lagrangian subspaces by the canonical
symmetry property of the projections?}
 The delicacy of Lagrangian analysis in weak symplectic
Hilbert space may also be illuminated by addressing the orthogonal projection onto a Lagrangian
subspace. In a strong symplectic Hilbert space with unitary $J$, the range of an orthogonal
projection is Lagrangian if and only if the projections $P$ and $I-P$ are conjugated by the
operator $J$ in the way
\[
I-P\ =\ JPJ^*\,,
\]
which is familiar from characterizing elliptic self-adjoint
pseudo-differential boundary conditions for elliptic differential of
first order, see \cite[Proposition 20.3]{BoWo93}. In weak symplectic
analysis, $J$ maps the range $\range P$ onto a dense subset of $\ker
P$, but there the argument stops.

\subsubsection{Contractibility of the space of Lagrangian subspaces?}
 There are two further differences between the weak and
the strong case, namely regarding the topology: while the Lagrangian Grassmannian $\Ll(X,\w)$
inherits contractibility from the space of unitary operators in separable Hilbert spaces by Lemma
\ref{l:strong-symplectic}(ii), more refined arguments will be needed to prove the contractibility
in the weak case, if it is true at all.

\subsubsection{Bott periodicity of the homotopy groups of the space of Fredholm
pairs of Lagrangian subspaces?}
Next, consider the space $\Ff\Ll_{\la}(X)$ of all
Lagrangian subspaces which form a Fredholm pair with a given
Lagrangian subspace $\la$\,. Its topology is presently also unknown in
the weak case, whereas we have
\[
\pi_1\bigl(\Ff\Ll_{\la}(X)\bigr)\cong \Z
\]
in strong symplectic Hilbert spaces $X$ (see \cite[Corollary 4.3]{BoFu99} and the generalization to
Bott periodicity in \cite[Equation (6.2) with Lemma 6.1 and Proposition 6.5]{KiLe00}).

\subsection{Curves of \textit{unitary} operators that are \textit{admissible} with respect to the positive half-line}
\label{ss:isometric} We begin with some observations on inner product spaces and refer to the
Appendix \ref{ss:clr} for a rigorous definition of the basic concepts of linear relations.

\begin{lemma}\label{l:h-unitary-kernel-r} Let $(X,h_X)$, $(Y,h_Y)$, $(Z,h_Z)$
denote three inner product spaces, $A$, $B$ linear relations between
$X$ and $Y$, and $C$ a linear relation between $X$ and $Z$.

\noi (a) Assume that $C$ is a linear operator, $\dom(A)\<\dom(C)$,
and $h_Y(y,y)\le h_Z(Cx,Cx)$ for all $(x,y)\in A$. Then $A$ is a
linear operator.

\noi (b) Assume that $B$ is a linear operator, $\dom(A)=\dom(C)\subset\dom(B)$, and
\begin{equation}\label{e:h-unitary-kernel-c}h_Y(y,y)+h_Z(z,z)\le h_Y(Bx,Bx)
\end{equation}
for all $(x,y)\in A$ and $(x,z)\in C$. Then $A$ and $C$ are linear
operators and $\ker(B-A)\subset\ker C$.
\end{lemma}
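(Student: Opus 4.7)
\medskip
The plan is to exploit linearity of the relations together with the given pointwise inequalities, by plugging in either $x=0$ or elements of $\ker(B-A)$ to squeeze the nonnegative quantities $h_Y(y,y)$ and $h_Z(z,z)$ down to zero.

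For part (a), recall that a linear relation $A \subset X\times Y$ is an operator precisely when $(0,y)\in A$ forces $y=0$. So I take an arbitrary $(0,y)\in A$. The hypothesis $\dom(A)\subset\dom(C)$ puts $0\in\dom(C)$, and since $C$ is a linear \emph{operator} we have $C(0)=0$. Applying the inequality $h_Y(y,y)\le h_Z(Cx,Cx)$ at $x=0$ gives $h_Y(y,y)\le h_Z(0,0)=0$, so $y=0$.

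For part (b), the same $x=0$ trick simultaneously forces $A$ and $C$ to be operators: pick $(0,y)\in A$ and $(0,z)\in C$; since $B$ is a linear operator $B(0)=0$, and the inequality \eqref{e:h-unitary-kernel-c} reads $h_Y(y,y)+h_Z(z,z)\le 0$, yielding $y=z=0$. Now that $A$ and $C$ are genuine operators, the composition-free expression $B-A$ is also an operator with domain $\dom(A)$. For the inclusion $\ker(B-A)\subset\ker C$, take $x\in\ker(B-A)$, so $Ax=Bx$; then $x\in\dom(A)=\dom(C)$ and applying \eqref{e:h-unitary-kernel-c} to the pairs $(x,Ax)\in A$ and $(x,Cx)\in C$ gives
\[
h_Y(Ax,Ax)+h_Z(Cx,Cx)\ \le\ h_Y(Bx,Bx)\ =\ h_Y(Ax,Ax),
\]
so $h_Z(Cx,Cx)\le 0$ and hence $Cx=0$.

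There is no real obstacle here: the argument is purely algebraic and uses only positivity of the inner products together with linearity of the operators $B$ (and $C$ in part (a)) at the origin. The only subtlety worth flagging is to be careful that $A$ is only assumed to be a \emph{relation}, so the ``operator at $0$'' reasoning cannot be applied to $A$ directly; instead it must be imported from $B$ and $C$ through the inequalities.
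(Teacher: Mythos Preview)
Your proof is correct and follows essentially the same approach as the paper's. The only cosmetic difference is that in part (b) the paper invokes part (a) (applied with $B$ playing the role of the operator $C$) to conclude that $A$ and $C$ are operators, whereas you redo the $x=0$ substitution directly; the kernel inclusion argument is identical.
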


\begin{proof} {\it a}. Let $y\in\ker A$, i.e., $(0,y)\in A$. By our assumption we have
$h_Y(y,y)\le h_Z(C0,C0)=0$. Since $h_Y$ is positive definite, we
have $y=0$.

\noi {\it b}. By (a) $A$ and $C$ are linear operators. Let $x\in\ker(B-A)$. Then $Bx=Ax$. By
(\ref{e:h-unitary-kernel-c}) we have $h_Z(Cx,Cx)\le 0$. Since $h_Z$ is positive definite, we have
$Cx=0$, i.e., $x\in\ker C$.
\end{proof}

Let $X$ be a complex Banach space. We apply the following notations:
\[%
\begin{array}
[c]{rl}%
\mathcal{C}(X)\ :=\  & \text{closed operators on $X$ with dense domain},\\
\mathcal{B}(X)\ :=\  & \text{bounded linear operators $X\to X$},\tand\\
\mathcal{CF}(X)\ :=\  & \text{closed (not necessarily bounded) Fredholm operators on $X$}.
\end{array}
\]
The topology on $\mathcal{CF}(X)$ is defined in the Appendix.

We assume that $X$ is an inner product space with a fixed inner product (i.e., a sesquilinear,
symmetric positive definite form) $h\colon X\times X\to\C$ which is bounded
\[
|h(x,y)|\ \leq\  c\norm{x}\norm{y}\quad\text{ for all $x,y\in X$}.
\]

\begin{definition}\label{d:h-unitary}
An operator $A\in\Cc(X)$ will be called {\em unitary} with respect
to $h$, if
\[
h(Ax,Ay)\ =\ h(x,y) \quad\text{ for all $x,y\in \dom (A)$}.
\]
\end{definition}

\begin{remark}\label{r:h-unitary}
(a) Note that $h$ induces a uniformly smaller norm than $\norm{\cdot}$ on $X$ which makes $X$ into
a Hilbert space if and only if $X$ becomes complete for this $h$-induced norm.

\noi (b) The concept of $h$-unitary extends trivially to closed operators with dense domain in one
Banach space equipped with an inner product, and range in a second Banach space, possibly with a
different inner product. In this sense, for any Lagrangian subspace the generating operator
$U\in\Cc(X^+,X^-)$ (established in Lemma \ref{l:lagrangian-representation}) is $(h^+,h^-)$-unitary
with $h^{\pm}=\mp i\omega|_{X^{\pm}}\/$.
\end{remark}

Like for unitary operators in Hilbert spaces, the following lemma shows that a unitary operator
with respect to $h$ has no eigenvalues outside the unit circle.

\begin{lemma}\label{l:spectrum-unitary}
Let $A\in\Cc(X)$ be unitary with respect to $h$ and $\la\in\C$,
$|\la|\neq 1$. Then $\ker(A-\la I)=\{0\}$.
\end{lemma}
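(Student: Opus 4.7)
The statement is the standard fact that a unitary (here, $h$-unitary) operator has its point spectrum on the unit circle, adapted to the weak setting where the inner product $h$ is uniformly dominated by the Banach norm but need not make $X$ complete. My plan is to argue directly from the definition of $h$-unitarity, bypassing Lemma \ref{l:h-unitary-kernel-r}, since the unitarity identity already encodes the required length-preservation.

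Concretely, I would start by picking $x \in \ker(A - \lambda I)$, so that $x \in \dom(A)$ and $Ax = \lambda x$. Applying the defining identity of Definition \ref{d:h-unitary} with $y = x$ gives
\[
h(x,x) \;=\; h(Ax,Ax) \;=\; h(\lambda x, \lambda x) \;=\; |\lambda|^{2}\,h(x,x),
\]
hence $(1 - |\lambda|^{2})\,h(x,x) = 0$. Since $|\lambda| \neq 1$ by hypothesis, we conclude $h(x,x) = 0$, and positive definiteness of $h$ then forces $x = 0$.

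\textbf{Main obstacle.} There is essentially none: the argument uses only sesquilinearity and positive definiteness of $h$, both of which are part of the running assumptions on $(X,h)$, and the unitarity identity applied to a single pair $(x,x) \in \dom(A) \times \dom(A)$. In particular, neither completeness of $X$ in the $h$-induced norm nor any of the machinery of closed relations is needed here. (The only small point worth flagging is the case $\lambda = 0$, but there the same one-line computation $h(x,x) = h(Ax,Ax) = h(0,0) = 0$ applies verbatim, so the proof is uniform in $\lambda$.)
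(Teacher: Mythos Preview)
Your proof is correct and is essentially identical to the paper's own proof: both pick $x\in\ker(A-\la I)$, apply the $h$-unitarity identity with $y=x$ to obtain $h(x,x)=|\la|^2 h(x,x)$, and conclude $x=0$ from $|\la|\neq 1$ and positive definiteness of $h$.
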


\begin{proof}
Let $x\in\ker(A-\la I)$, so $Ax=\la x$ and
\[
h(x,x)\ =\ h(Ax,Ax)\ =\ |\la|^2h(x,x).
\]
Since $|\la|\neq 1$, we get $h(x,x)=0$ and so $x=0$ since $h$ is positive definite.
\end{proof}

For a certain subclass of unitary operators with respect to $h$ we show that they have discrete
spectrum close to 1. Consequently, they are admissible with respect to the positive half-line
$\ell$ (in the sense of Definition \ref{d:admissible} of Appendix \ref{ss:A3}) and so permit the
definition of spectral flow through $\ell$ for continuous families (Appendix \ref{ss:A3}). Here the
co-orientation of $\ell$ is upward.

\begin{proposition}\label{p:unitary-admissible}
(a) Let $X$ be a Banach space with bounded inner product $h$. Let $A\in\Cc(X)$ be $h$-bounded,
i.e., an operator satisfying
\[ h(Ax,Ay)\ \le\  h(x,y) \quad\text{ for all $x,y\in \dom(A)$}.\]
We assume $A-I\in\Cc\Ff(X)$ of index 0. If either $A$ is $h$-unitary or $A$ is bounded, then there
is a bounded neighborhood $N\<\C$ of $1$ with closure $\bar N$ such that
\[
\s(A)\cap \bar N \< \{1\},\quad\dim P_N(A)\ =\ \dim\ker(A-I).
\]

\noi (b) Let $\mathfrak{m}$ be an open submanifold of $\ell:=(0,+\infty)$ and $A$ be unitary with
respect to $h$. If $A$ is admissible with respect to $\mathfrak{m}$ in the sense of Definition
\ref{d:admissible}a of the Appendix \ref{ss:A3}, then $\sigma(A)\cap \mathfrak{m}\subset\{1\}$.

\noi (c) Let $\{h_s\}_{0\le s\le 1}$ be a family of inner products on $X$. Let $A_s\in \Cc(X)$ be
unitary with respect to $h_s$. We assume that the family $\{A_s\}$ is continuous. We denote $h_0=:
h$ and $A_0=:A$ and choose $N$ like in (a). Then for $s\ll 1$ the spectrum part $\s(A_s)\cap N$
consists of eigenvalues of finite algebraic multiplicity and we have
\[
\s(A_s)\cap N \< S^1\,.
\]

\noi (d) Let $\{h_s\}$ and $A_s$ be as in (c). Then there exists $\e\in (0,1)$ such that the family
$\{A_s\}$ is a family of admissible operators that is spectral continuous near
$\mathfrak{m}_{\e}:=(1-\e,1+\e)$ in the sense of Definitions \ref{d:admissible}a and
\ref{d:spectral-continuous}a.

\noi (e) Let $\{h_s\}$ and $A_s$ be as in (c). Let $\mathfrak{m}\ni 1$ be an open submanifold of
$\ell=(0,+\infty)$. If the family $\{A_s\}$ is a family of admissible operators that is spectral
continuous near $\mathfrak{m}$, we can define the spectral flow
\[
\SF_{\ell}\{A_s\}:=\SF_{\mathfrak{m}}\{A_s\}.
\]

\end{proposition}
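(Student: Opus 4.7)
The plan is to treat the five parts in sequence, with part (a) as the analytic core and parts (b)--(e) following as applications of Lemma \ref{l:spectrum-unitary} together with standard perturbation theory for closed Fredholm operators.

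For part (a), the hypothesis $A-I \in \mathcal{CF}(X)$ of index $0$ already gives, by the stability of the Fredholm index under small perturbations, a neighbourhood of $1$ on which $A-\lambda I$ is Fredholm of index $0$; hence $\sigma(A)$ near $1$ is discrete and the Riesz projection $P_N(A)$ onto the spectral part in a suitable punctured disc $N \ni 1$ has finite rank equal to the algebraic multiplicity of $1$. The crux is to show that this algebraic multiplicity coincides with $\dim\ker(A-I)$, i.e.\ that the generalised eigenspace at $1$ is already the genuine kernel. Given $x$ with $(A-I)^2 x = 0$, set $y := (A-I)x$; then $Ay = y$, so $A^n x = x + ny$ by induction. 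The $h$-boundedness $h(A^n x, A^n x) \le h(x,x)$ expands to
\[
2n\,\operatorname{Re} h(x,y) + n^2 h(y,y) \le 0 \quad \text{for all } n \ge 1,
\]
which forces $h(y,y) \le 0$, hence $y = 0$ by positive-definiteness of $h$. The identical identity (with equality) settles the $h$-unitary case, while in the norm-bounded case the same computation applies on all of $X$.

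Parts (b) and (c) are then short. For (b), admissibility of $A$ with respect to $\mathfrak{m}$ means, by Definition \ref{d:admissible}a, that $\sigma(A) \cap \mathfrak{m}$ consists of isolated eigenvalues of finite multiplicity; Lemma \ref{l:spectrum-unitary} forbids any such eigenvalue in $(0,+\infty) \setminus \{1\}$, so $\sigma(A) \cap \mathfrak{m} \subset \{1\}$. For (c), applying (a) to $A_0 = A$ secures the neighbourhood $N$; continuity of the family $\{A_s\}$ in the gap topology on $\mathcal{C}(X)$ yields the standard resolvent convergence on compact subsets of $\rho(A_0)$, so $\partial N \subset \rho(A_s)$ for $s$ small and $\sigma(A_s) \cap N$ is a finite union of isolated eigenvalues of finite algebraic multiplicity. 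Because each $A_s$ is $h_s$-unitary, Lemma \ref{l:spectrum-unitary} (applied with the inner product $h_s$) places those eigenvalues on $S^1$.

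Parts (d) and (e) are packaging. For (d), choose $\varepsilon > 0$ so small that the closed interval $[1-\varepsilon, 1+\varepsilon]$ is contained in $N$ and disjoint from $S^1 \setminus \{1\}$. Then $\sigma(A_s) \cap \mathfrak{m}_\varepsilon \subset S^1 \cap \mathfrak{m}_\varepsilon = \{1\}$ for small $s$, and the Riesz projection associated with a fixed contour around $\{1\}$ depends continuously on $s$ by the resolvent convergence; this is precisely what the Appendix's admissibility and spectral continuity (Definitions \ref{d:admissible}a and \ref{d:spectral-continuous}a) require. Finally for (e), part (b) applied to each $A_s$ gives $\sigma(A_s) \cap \mathfrak{m} \subset \{1\}$, so $\SF_\mathfrak{m}\{A_s\}$ counts only crossings of eigenvalues through $1$; this count is manifestly independent of the particular $\mathfrak{m} \ni 1$ inside $\ell$, and the assignment $\SF_\ell\{A_s\} := \SF_\mathfrak{m}\{A_s\}$ is well-defined. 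The main obstacle is not the algebraic argument in (a), which is elementary, but the careful translation between the gap-topology formalism of Appendix \ref{ss:clr}--\ref{ss:A3} and the classical resolvent perturbation theory invoked in (c) and (d); one must check that the Appendix's notion of a continuous family of unbounded closed operators delivers the resolvent continuity and Riesz-projection stability used above, after which the algebraic identity $A^n x = x + ny$ together with positive-definiteness of $h$ does everything specific to the weak-symplectic setting.
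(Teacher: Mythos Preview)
Your overall strategy for (a) differs from the paper's and would work, but there is a genuine gap. You write that stability of the Fredholm index gives a neighbourhood $U$ of $1$ on which $A-\lambda I$ is Fredholm of index $0$, and then assert ``hence $\sigma(A)$ near $1$ is discrete and the Riesz projection has finite rank''. That inference is not valid as stated: the Gohberg--Krein punctured-neighbourhood theorem gives only the alternative that either $\sigma(A)\cap U$ is discrete \emph{or} all of $U$ lies in $\sigma(A)$. To exclude the second alternative you must exhibit some $\lambda\in U$ with $A-\lambda I$ invertible. The $h$-boundedness you have not yet used supplies this: if $Ax=\lambda x$ with $|\lambda|>1$ then $|\lambda|^2 h(x,x)=h(Ax,Ax)\le h(x,x)$ forces $x=0$, so for real $\lambda>1$ close to $1$ the operator $A-\lambda I$ is injective and Fredholm of index $0$, hence invertible. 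With this inserted, your Jordan-chain computation ($A^n x=x+ny$ for $y=(A-I)x$, whence $n^2 h(y,y)+2n\operatorname{Re}h(x,y)\le 0$ for all $n\ge 1$) does show that the algebraic and geometric multiplicities at $1$ coincide. This is a different and rather clean route compared with the paper, which instead takes the $h$-orthogonal splitting $X=\ker(A-I)\oplus X_1$, writes $A$ in block form, and proves directly that the block $A_{11}-I_1$ has bounded inverse (using $h$-unitarity to kill the off-diagonal block in one case and an index computation in the bounded case).

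There is a second, smaller gap in (d): you only argue admissibility and spectral continuity ``for small $s$'', using the neighbourhood $N$ attached to $A_0$. The statement requires a single $\varepsilon$ that works for all $s\in[0,1]$. The paper obtains this by applying (c) and Lemma~\ref{l:16} at each $t\in[0,1]$ to get $\varepsilon(t),\delta(t)$, covering $[0,1]$ by finitely many intervals $(t_k-\delta(t_k),t_k+\delta(t_k))$, and setting $\varepsilon=\min_k\varepsilon(t_k)$. Without this compactness step your choice of $\varepsilon$ need not be uniform in $s$.
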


\begin{proof} {\it a}. Since $\ker(A-I)$ is finite-dimensional, we have an $h$-orthogonal
splitting
\[
X\ =\ \ker(A-I)\oplus X_1
\]
with closed $X_1$\,. (Take $X_1:=\Pi(X)$ with $\Pi(x):=x-\sum_{j=1}^n h(x,e_j)e_j$, where $\{e_j\}$
is an $h$-orthonormal basis of $\ker(A-I)$). We notice that $\ker(A-I)\<\dom(A)$, so
\begin{equation}\label{e:h-orthogonal-domain}
\dom (A)\ =\  \ker(A-I) \oplus (\dom (A)\cap X_1) .
\end{equation}
Then the operator $A$ can be written in block form
\begin{equation}\label{e:a-block-form}
A\ =\  \begin{pmatrix} I_0&A_{01}\\0&A_{11}\end{pmatrix}\,,
\end{equation}
where $I_0$ denotes the identity operator on $\ker(A-I)$.

Since $A$ is $h$-bounded, by Lemma \ref{l:h-unitary-kernel-r}b we have $\ker(A_{11}-I_1)\<\ker
A_{01}$. Here $I_1$ denotes the identity operator on $X_1$. So we have
\[\ker(A_{11}-I_1)\<\ker(A_{11}-I_1)\cap\ker A_{01}\cap X_1\ =\ \ker(A-I)\cap
X_1\ =\ \{0\}.\]

Now we distinguish two cases. If $A$ is $h$-unitary, let $y\in \dom (A)\cap X_1$\, and
$x\in\ker(A-I)$. Then
\[
h(x,Ay)\ =\ h(Ax,Ay)\ =\ h(x,y)\ =\ 0
\]
by \eqref{e:h-orthogonal-domain}. So, the range $\ran(A|_{\dom (A)\cap X_1})$ is $h$-orthogonal to
$\ker(A-I)$ and, hence, contained in $X_1$\,. Hence $A_{01}=0$. We observe that $A-I$ is closed as
bounded perturbation of the closed operator $A$; it follows that the component $A_{11}$ and the
operator $A_{11}-I_1$ are closed in $X_1$\,. That proves that $A_{11}-I_1$ has a bounded inverse.

If, on the other side, $A$ is bounded, then both $A_{01}$ and $A_{11}$ are bounded and we have
\begin{eqnarray*}
\index(A_{11}-I_1)&\ =\ &\index \bigl(\diag(I_0,A_{11}-I_1)\bigr)\\
&\ =\ &\index\bigl((A-I)\diag(I_0,I_1)\bigr)\\
&\ =\ &\index(A-I)+\index\bigl(\diag(I_0,I_1)\bigr)\  =\ 0.
\end{eqnarray*}
By $\ker(A_{11}-I_1)=\{0\}$ we have $A_{11}-I_1$ surjective. By the Closed Graph Theorem, it
follows that $(A_{11}-I_1)\ii$ is bounded and so $A_{11}-I_1$ has a bounded inverse.

We conclude that in both cases $A_1$ has no spectrum near 1. From the decomposition
\eqref{e:a-block-form} we get $\s(A)=\s(I_0)\cup\s(A_1)$ with $\s(I_0)=\{1\}$. So, if $1\in\s(A)$
it is an isolated point of $\s(A)$ of multiplicity $\dim\ker(A-I)$.

\noi {\it b}. Since $A$ is admissible with respect to ${\mathfrak m}$, there exists a bounded open
subset $N$ of $\C$ such that $\sigma(A)\cap {\mathfrak m}=\sigma(A)\cap N$ and $\dim\image
P_N(A)<+\infty$. Then $P_N(A)AP_N(A)$ defined on the finite dimensional vector space $\image
P_N(A)$ is unitary with respect to $h|_{\image P_N(A)}$. Thus we have $\sigma(P_N(A)AP_N(A))\subset
S^1$ and%
\[
\sigma(A)\cap {\mathfrak m}=\sigma(A)\cap N=\sigma(P_N(A)AP_N(A))\subset S^1\cap {\mathfrak
m}\subset S^1\cap\ell=\{1\}.
\]

\noi {\it c}. From our assumption it follows that $\s(A)\cap \partial N = \emptyset$ and, actually,
$\s(A_s)\cap
\partial N = \emptyset$ for $s$ sufficiently small. Then
\[
\left\{P_N(A_s) \ :=\  -\frac 1{2\pi i}\int_{\partial N} (A-\la I)\ii d\la\right\}
\]
is a continuous family of projections. From \cite[Lemma I.4.10]{Ka76} we obtain
\[
\dim\ran P_N(A_s) \ =\ \dim\ran P_N(A)\ <\ +\infty \quad \tand\quad
P_N(A_s)A_s\ \<\ A_sP_N(A_s),
\]
and from \cite[Lemma III.6.17]{Ka76} we get $ \s(A_s)\cap
N=\s(P_N(A_s) A_s P_N(A_s)).$ Since all operators $P_N(A_s) A_s
P_N(A_s)$ are unitary with respect to $h_s|_{\ran P_N(A_s)}$, it
follows $\s(P_N(A_s) A_s P_N(A_s))$$\< S^1$\,.

\smallskip
\noi {\it d}. By (c) and Lemma \ref{l:16} of Appendix \ref{ss:A3}, for any $t\in[0,1]$, there
exists $\e(t)\in(0,1)$ and $\delta(t)>0$ such that $\{A_s\}$,
$s\in(t-\delta(t),t+\delta(t))\cap[0,1]$ is a family of admissible operators that is spectral
continuous near ${\mathfrak m}_{\e(t)}$. The open cover $\{(t-\delta(t),t+\delta(t))\}$,
$t\in[0,1]$ of $[0,1]$ has a finite subcover $\{(t_k-\delta(t_k),t_k+\delta(t_k))\}$,
$k=1,\ldots,n$. Set $\e=\min\{\e(t_k);k=1,\ldots,n\}$. Then $\{A_s\}$,
$s\in(t_k-\delta(t_k),t_k+\delta(t_k))\cap[0,1]$ is a family of admissible operators that is
spectral continuous near ${\mathfrak m}_{\e}$, and $\{A_s\}$, $s\in [0,1]$ is a family of
admissible operators that is spectral continuous near ${\mathfrak m}_{\e}$.

\noi {\it e}. By (d), such ${\mathfrak m}$ do exist. By (b), $\SF_{\mathfrak m}\{A_s\}$ does not
depend on the choice of ${\mathfrak m}$. Thus our concept of the spectral flow relative
$\ell=(0,+\infty)$ is well-defined.

\end{proof}

Thus, it follows that any $h$-unitary operator $A$ with $A-I$ Fredholm of index $0$ has the same
spectral properties near $|\la|=1$ as unitary operators in Hilbert space with the additional
property that 1 is an isolated point of the spectrum of finite multiplicity.

This now permits us to define the Maslov index in weak symplectic analysis.


\addtocontents{toc}{\medskip\noi}

\section{Maslov index in weak symplectic analysis}\label{s:maslov}

Now we turn to the geometry of curves of Fredholm pairs of Lagrangian subspaces in weak symplectic
Banach spaces. We show how the usual definition of the Maslov index can be suitably extended and
derive basic and more intricate properties.

\subsection{Definition and basic properties of the Maslov index}
\label{ss:maslov} Our data for defining the Maslov index are a {\em
continuous} family
$
\{(X,\w_s, X_s^+,X_s^-)\}
$
 of weak symplectic Banach spaces with {\it continuous}
splitting and a {\em continuous} family $\{(\la_s,\mu_s)\}$ of Fredholm pairs of Lagrangian
subspaces of $\{(X,\w_s)\}$ of index 0. Our first task is defining the involved ``continuity".

\begin{definition}\label{d:continuous-banach}
Let $X$ be a fixed complex Banach space and $\{\w_s\}$ a family of
weak symplectic forms for $X$. Let $(X,\w_s,X_s^+, X_s^-)$ be a
family of symplectic splittings of $(X,\w_s)$ in the sense of
Definition \ref{d:symplectic-splitting}.

\noi (a) The family $\{(X,\w_s,X_s^+,X_s^-)\}$ will be called {\em continuous} if the family of
forms $\{\omega_s\}$ is continuous, and the families $\{X_s^\pm\}$ are continuous as closed
subspaces of $X$ in the gap topology. Equivalently, we may demand that the family $\{P_s\}$ of
projections
\[
P_s\colon  x+y \ \mapsto\  x, \quad \text{ for $x\in X_s^+ \tand y\in X_s^-$}\,,
\]
is continuous.


\noi (b)  Let $\{(X,\w_s, X_s^+,X_s^-)\}$, $s\in[a,b]$  be a continuous family of symplectic
splittings with induced inner products $h_s^{\pm}=\mp\omega|_{X^{\pm}}$. Let $\{(\la_s,\mu_s)\}$ be
a continuous curve of Fredholm pairs of Lagrangian subspaces of index 0. Let $U_s\colon \dom
(U_s)\to X^-_s$, resp. $V_s\colon \dom(V_s)\to X^-_s$ be closed $(h_s^+,h_s^-)$-unitary operators
with $\GGG(U_s)=\la_s$ and $\GGG(V_s)=\mu_s$\/. We define the {\em Maslov index} of the curve
$\{\la_s,\mu_s\}$ with respect to $P_s$ by
\begin{equation}\label{e:maslov}
\Mas\{\la_s,\mu_s;P_s\} \ :=\  \SF_{\ell}\Bigl\{\begin{pmatrix} 0&U_s\\
V_s\ii&0\end{pmatrix}\Bigr\} \/,
\end{equation}
where $V\ii$ denotes the algebraic inverse of the closed injective operator $V$ and
$\ell:=(0,+\infty)$ and with upward co-orientation. The spectral flow $\SF_{\ell}$ is defined in
the sense of Proposition \ref{p:unitary-admissible}e.
\end{definition}

\begin{remark}\label{r:full-domain}
Let $\{(X,\w_s,X_s^+,X_s^-)\}$ be a continuous family. A curve $\{\la_s\}$ of Lagrangian subspaces
is continuous (i.e., $\{\la_s=\GGG(U_s)\}$ is continuous as a curve of closed subspaces of $X$), if
and only if the family $\{S_{s,s_0}\circ U_s\circ S_{s,s_0}\ii\}$ is continuous as a family of
closed, generally unbounded operators in the space $\ran P_{s_0}$. Here $U_s$ denotes the
generating operator $U_s\colon \dom U_s\to X_s^-$ with $\mathfrak{G}(U_s)=\la_s$ (see Lemma
\ref{l:lagrangian-representation}); $s_0\in [0,1]$ is chosen arbitrarily to fix the domain of the
family; and
\[
S_{s,s_0}\colon \ran P_s\too \ran P_{s_0}
\]
is a bounded operator with bounded inverse which is defined in the
following way (see also \cite[Section I.4.6, pp. 33-34]{Ka76}):
\[
S_{s,s_0}\ :=\  S_{s,s_0}' (I-R)^{-1/2} \ =\  (I-R)^{-1/2} S_{s,s_0}'\,,
\]
where
\[
R \ :=\ (P_s-P_{s_0})^2 \qquad \tand\qquad S_{s,s_0}' \ :=\ P_{s_0}P_s+(I-P_{s_0})(I-P_s).
 \]

\end{remark}

\medskip

The main result of our paper is

\begin{theorem}\label{t:main} The Maslov index of Definition \ref{d:continuous-banach}b is well-defined.
\end{theorem}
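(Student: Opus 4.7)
The plan is to reduce Theorem \ref{t:main} to Proposition \ref{p:unitary-admissible}(d)--(e), applied to the block family
\[
T_s := \begin{pmatrix} 0 & U_s \\ V_s^{-1} & 0 \end{pmatrix}
\]
on $X = X_s^+ \oplus X_s^-$ equipped with the inner product $H_s := h_s^+ \oplus h_s^-$, where $h_s^\pm = \mp i\omega_s|_{X_s^\pm}$. What must be verified is: (i) each $T_s$ is $H_s$-unitary; (ii) $T_s - I$ is closed Fredholm on $X$ of index zero; (iii) the families $\{T_s\}$ and $\{H_s\}$ depend continuously on $s$ in the senses required by Proposition \ref{p:unitary-admissible}(c)--(d).

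For (i), Remark \ref{r:h-unitary}(b) identifies $U_s$ and $V_s$ as $(h_s^+, h_s^-)$-unitary, so $V_s^{-1}$ is $(h_s^-, h_s^+)$-unitary on its natural domain $\ran V_s$. A direct block calculation then yields $H_s(T_s \xi, T_s \eta) = H_s(\xi, \eta)$ for all $\xi, \eta \in \dom T_s$. For (ii), a point $(a, b) \in \dom T_s$ lies in $\ker(T_s - I)$ exactly when $b \in \dom U_s \cap \dom V_s$ with $U_s b = V_s b = a$, so $\ker(T_s - I) \cong \la_s \cap \mu_s$ is finite-dimensional by hypothesis. For the cokernel I would analyze which pairs $(a', b')$ lie in $\ran(T_s - I)$ and use the $H_s$-unitarity of $T_s$ to match $\coker(T_s - I)$ against $\ker(T_s^{-1} - I) \cong \ker(T_s - I)$; the hypothesis $\index(\la_s, \mu_s) = 0$ then forces $\index(T_s - I) = 0$. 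Closedness of $T_s - I$ is inherited from closedness of $U_s$ and $V_s^{-1}$ via Lemma \ref{l:lagrangian-representation}(b).

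For (iii), I would appeal to Remark \ref{r:full-domain}: continuity of the Lagrangian paths $\{\la_s\}$ and $\{\mu_s\}$ is equivalent, after transport by the continuous isomorphisms $S_{s, s_0}$, to continuity in the gap topology of the transported families $\{U_s\}$ and $\{V_s\}$ on a fixed subspace. Continuity of $V_s$ in the gap topology forces continuity of $V_s^{-1}$, and the block $T_s$ inherits continuity from its entries together with the continuously varying projections $P_s$. Continuity of $\{H_s\}$ is immediate from continuity of $\{\omega_s\}$ together with that of $\{P_s\}$, as assumed in Definition \ref{d:continuous-banach}(a). With (i)--(iii) in place, Proposition \ref{p:unitary-admissible}(d) supplies an interval $\mathfrak{m}_\e = (1 - \e, 1 + \e) \subset \ell$ on which $\{T_s\}$ is an admissible, spectral continuous family, and part (e) delivers the Maslov index as $\SF_\ell\{T_s\} := \SF_{\mathfrak{m}_\e}\{T_s\}$, independent of the auxiliary choice of $\mathfrak{m}$.

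The principal obstacle is the cokernel/index computation in (ii). When $V_s$ is only closed and may fail to be bounded invertible, Proposition \ref{p:fp-characterization} does not apply directly, and the identity $\index(T_s - I) = \index(\la_s, \mu_s)$ must instead be extracted from a careful algebraic analysis that separates the roles of $\dom U_s$, $\dom V_s$, and $\ran V_s$. A secondary issue is ensuring that $T_s$ genuinely belongs to $\Cc(X)$ (in particular, has dense domain), which in the weak case may require passing to the closed-relation framework of Subsection \ref{ss:isometric} and Appendix \ref{ss:clr} rather than working strictly with densely defined operators.
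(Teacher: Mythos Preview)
Your overall architecture matches the paper's: reduce to Proposition \ref{p:unitary-admissible}(d)--(e) by checking $H_s$-unitarity (this is the paper's Lemma \ref{l:2}(b); note the block acts on $X_s^-\oplus X_s^+$ with inner product $h_s^-\oplus h_s^+$), the Fredholm-index-zero property of $T_s-I$, and gap continuity of $\{T_s\}$. The genuine gap is exactly where you locate it, in (ii), and your proposed fix does not close it: $\ker(T_s^{-1}-I)=\ker(T_s-I)$ is a tautology (both equal $\{\xi:(\xi,\xi)\in T_s\}$) and says nothing about the cokernel, while invoking $\index(\la_s,\mu_s)=0$ at the end presupposes the very identity $\index(T_s-I)=\index(\la_s,\mu_s)$ you are trying to establish.

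The paper's resolution is to pass to the product symplectic space $X\boxplus X:=(X,\w_s)\oplus(X,-\w_s)$ with the diagonal $\Delta$ as reference Lagrangian. Relative to the projection $\Pp_s:=P_s\boxplus(I-P_s)$, the generator of $\la_s\boxplus\mu_s$ is $\diag(U_s,V_s^{-1})$, while the generator of $\Delta$ is the swap $\bigl(\begin{smallmatrix}0&I_{X_s^-}\\I_{X_s^+}&0\end{smallmatrix}\bigr)$, which is bounded with bounded inverse \emph{regardless of whether $V_s$ is}. Hence Proposition \ref{p:fp-characterization} does apply, in $X\boxplus X$, and together with Lemma \ref{l:boxplus} gives
\[
\index(T_s-I)\ =\ \index(\la_s\boxplus\mu_s,\Delta)\ =\ \index(\la_s,\mu_s)\ =\ 0.
\]
This $\boxplus$ trick also dissolves your secondary worry (dense domains are irrelevant once $T_s$ is treated as a closed relation via its graph) and streamlines (iii): by \eqref{e:product-graph} the graph of $T_s$ is expressed directly through $\la_s\boxplus\mu_s$ and $\Delta$, whose gap-continuity is immediate from that of $\la_s,\mu_s,P_s$, so no transport via $S_{s,s_0}$ is needed.
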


\begin{proof} By  a series of lemmas below, we check that the family of block matrices on the right side of \eqref{e:maslov} satisfies
the condition of Proposition \ref{p:unitary-admissible}e. Then our theorem follows. Note that we do
not need the continuity of $\omega_s$ for our chain of arguments.
\end{proof}

\begin{lemma}\label{l:boxplus}
Let $(X,\w)$ be a weak symplectic Banach space. Let $\D$ denote the diagonal (i.e., the canonical
Lagrangian) in the product symplectic space $X\boxplus X:=(X,\w)\oplus (X,-\w)$, and $\la,\mu$
Lagrangian subspaces of $(X,\w)$. Then
\[
(\la,\mu)\in\Ff\Ll^2(X)\quad \iff\quad
(\la\boxplus\mu,\D)\in\Ff\Ll^2\bigl(X\boxplus X\bigr)
\]
and
\[
\index(\la,\mu)\ =\ \index(\la\boxplus\mu,\D),
\]
where $\la\boxplus\mu:=\{(x,y)\mid x\in \la,y\in\mu\}$.
\end{lemma}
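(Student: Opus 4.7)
The plan is to reduce both the intersection and the sum on the $X \boxplus X$ side to the corresponding objects on the $X$ side via a single natural bounded linear map. First I would verify that both $\lambda \boxplus \mu$ and $\Delta$ are indeed Lagrangian in $X \boxplus X$ equipped with the form $\omega \oplus (-\omega)$: isotropy of $\lambda \boxplus \mu$ is immediate since the two summands contribute zero separately; isotropy of $\Delta$ follows from the sign difference, as $\omega(x,y) - \omega(x,y) = 0$. The equalities $(\lambda \boxplus \mu)^\omega = \lambda \boxplus \mu$ and $\Delta^\omega = \Delta$ follow from the non-degeneracy of $\omega$ together with the Lagrangian property of $\lambda, \mu$.

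The heart of the argument is the bounded linear map
\[
\phi \colon X \boxplus X \too X, \qquad \phi(x,y) \ :=\  y - x.
\]
This is surjective with $\ker \phi = \Delta$. The intersection side is immediate: every element of $(\lambda \boxplus \mu) \cap \Delta$ has the form $(x,x)$ with $x \in \lambda \cap \mu$, so $(x,x) \mapsto x$ gives an algebraic isomorphism $(\lambda \boxplus \mu) \cap \Delta \cong \lambda \cap \mu$. For the sum side, I would check the identity
\[
\phi^{-1}(\lambda + \mu) \ =\  (\lambda \boxplus \mu) + \Delta,
\]
which is a direct verification: if $y - x = a + b$ with $a \in \lambda$, $b \in \mu$, then with $z := x + a$ one has $(x,y) = (-a, b) + (z,z) \in (\lambda \boxplus \mu) + \Delta$; the reverse inclusion is obvious. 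Since $\phi$ is a bounded linear surjection with kernel $\Delta$, it descends to a topological isomorphism
\[
(X \boxplus X)/\bigl((\lambda \boxplus \mu) + \Delta\bigr) \ \cong\  X/(\lambda + \mu).
\]

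Combining these two isomorphisms gives simultaneously $\dim(\lambda \cap \mu) = \dim\bigl((\lambda \boxplus \mu) \cap \Delta\bigr)$ and $\dim X/(\lambda + \mu) = \dim(X \boxplus X)/\bigl((\lambda \boxplus \mu) + \Delta\bigr)$. Consequently $(\lambda, \mu) \in \Ff^2_{\operatorname{alg}}(X)$ if and only if $(\lambda \boxplus \mu, \Delta) \in \Ff^2_{\operatorname{alg}}(X \boxplus X)$, and in that case the two indices coincide by subtraction. For the closedness condition required by the topological Fredholm pair space $\Ff\Ll^2$, I appeal to Remark \ref{r:redholm-pairs}: once the codimension of the sum is finite, closedness is automatic in Banach space. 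Alternatively, the identity $\phi^{-1}(\lambda + \mu) = (\lambda \boxplus \mu) + \Delta$ together with continuity of $\phi$ transfers closedness directly between the two settings.

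There is no genuinely hard step here; the only point that needs a little care is keeping track of the sign in $\omega \oplus (-\omega)$ when checking that $\Delta$ is Lagrangian, and verifying that $\phi^{-1}(\lambda+\mu)$ really equals $(\lambda \boxplus \mu) + \Delta$ rather than merely containing it.
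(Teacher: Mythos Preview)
Your proof is correct and follows essentially the same strategy as the paper: identify the intersection directly as $\{(x,x)\mid x\in\la\cap\mu\}$, then establish an isomorphism $(X\boxplus X)/\bigl((\la\boxplus\mu)+\D\bigr)\cong X/(\la+\mu)$. The only cosmetic difference is that the paper realizes this quotient via the explicit anti-diagonal complement $\D'=\{(x,-x)\mid x\in X\}$ and the identity $(\la\boxplus\mu)+\D=\D+\D'_{\la+\mu}$, whereas you package the same linear algebra through the surjection $\phi(x,y)=y-x$ with kernel $\D$; these are equivalent computations.
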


\begin{proof}
Clearly $\lambda\boxplus\mu$ and $\Delta$ are Lagrangian subspaces of $X\boxplus X$. Since%
\[
(\lambda\boxplus\mu)\cap\Delta\ =\ \{(x,x)|x\in\lambda\cap\mu\}\ \simeq\ \lambda\cap\mu,%
\]
these spaces have the same dimension. Set%
\[
\Delta^{\prime}\ :=\ \{(x,-x)|x\in X\}\quad \tand\quad
\Delta^{\prime}_{\lambda+\mu}\ :=\ \{(x,-x)|x\in \lambda+\mu\}.%
\]
Then we have%
\begin{align*}
\lambda\boxplus\mu+\Delta\ &=\ \{(x,y)+(\xi,\xi)|x\in\lambda,y\in\mu,\xi\in X\}\\
\ &=\
\{(\frac{x-y}{2},\frac{y-x}{2})+(\frac{x+y}{2}+\xi,\frac{x+y}{2}+\xi)|x\in\lambda,y\in\mu,\xi\in
X\}\ =\ \Delta+\Delta^{\prime}_{\lambda+\mu}.%
\end{align*}
So the following holds:
\[
\frac{X\boxplus X}{\lambda\boxplus\mu+\Delta}\ =\
\frac{\Delta\oplus\Delta^{\prime}}{\Delta\oplus\Delta^{\prime}_{\lambda+\mu}}
\ \simeq\ \frac{\Delta^{\prime}}{\Delta^{\prime}_{\lambda+\mu}}\ \simeq\ \frac{X}{\lambda+\mu},%
\]
and they have the same dimension. Lemma \ref{l:boxplus} is proved.
\end{proof}

\begin{lemma}\label{l:2} Let $(X,\omega,X^+,X^-)$ be a weak symplectic Banach space with symplectic splitting.
Set $h^{\pm}:=\mp i\omega|_{X^{\pm}}$. Let $\Delta$ denote the diagonal in the symplectic space
$X\boxplus X$. Let $\lambda$ and $\mu$ be two Lagrangian subspaces of $X$ with generator $U$, $V$
respectively. Then we have:

\noi (a) The pair $(\lambda,\mu)$ is Fredholm of index $0$ if and only if $\begin{pmatrix} 0&U\\
V\ii&0\end{pmatrix}-I$ is of index $0$.

\noi (b) The matrix $\begin{pmatrix} 0&U\\
V\ii&0\end{pmatrix}$ is $(h^-\oplus h^+)$-unitary.
\end{lemma}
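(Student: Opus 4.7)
The plan is to derive both parts from the machinery already established: Remark \ref{r:h-unitary}b (generators of Lagrangians are unitary with respect to $h^\pm$), Lemma \ref{l:boxplus} (translation of Fredholm pairs into the $X\boxplus X$ picture), and Proposition \ref{p:fp-characterization} (the graph characterization of Fredholm pairs).

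For (b), I would argue directly. Since $\lambda=\mathfrak{G}(U)$ and $\mu=\mathfrak{G}(V)$ are Lagrangian, Remark \ref{r:h-unitary}b says that $U$ and $V$ are $(h^+,h^-)$-unitary. Applying the definition to $V$ and substituting $a=V\alpha, a'=V\alpha'$ yields that $V^{-1}\colon \ran V\to X^+$ is $(h^-,h^+)$-unitary. Then I expand
\[
(h^-\oplus h^+)\!\left(\begin{pmatrix}0&U\\V^{-1}&0\end{pmatrix}\!\begin{pmatrix}a\\b\end{pmatrix},\begin{pmatrix}0&U\\V^{-1}&0\end{pmatrix}\!\begin{pmatrix}a'\\b'\end{pmatrix}\right)=h^-(Ub,Ub')+h^+(V^{-1}a,V^{-1}a'),
\]
which by the two unitarity properties equals $h^+(b,b')+h^-(a,a')$. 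That is exactly the $(h^-\oplus h^+)$ inner product on the input.

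For (a), the idea is to realize the block matrix as the composition appearing in the graph representation of the pair $(\lambda\boxplus\mu,\Delta)$ inside $X\boxplus X$, and then apply Proposition \ref{p:fp-characterization}. I equip $X\boxplus X$ with the symplectic splitting $(X\boxplus X)^+:=X^+\oplus X^-$, $(X\boxplus X)^-:=X^-\oplus X^+$ (checking that $-i(\omega\ominus\omega)$ is positive definite on the first summand and negative on the second). Writing an arbitrary point $(x,x)\in\Delta$ as $(x_++x_-,x_++x_-)$, its $(X\boxplus X)^\pm$-components are $(x_+,x_-)$ and $(x_-,x_+)$, so $\Delta$ is the graph of the flip $F=\bigl(\begin{smallmatrix}0&I\\I&0\end{smallmatrix}\bigr)$, which is bounded and bounded invertible with $F^{-1}=F$. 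Similarly, a point $((x_+,Ux_+),(y_+,Vy_+))\in\lambda\boxplus\mu$ rearranges to $(X\boxplus X)^+$-component $(x_+,Vy_+)$ and $(X\boxplus X)^-$-component $(Ux_+,y_+)$; setting $a=x_+$, $b=Vy_+$ shows that $\lambda\boxplus\mu$ is the graph of the diagonal operator $W=\bigl(\begin{smallmatrix}U&0\\0&V^{-1}\end{smallmatrix}\bigr)$ with domain $\dom(U)\oplus\ran(V)$.

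Now the hypotheses of Proposition \ref{p:fp-characterization}, applied in $X\boxplus X$ with $F$ playing the role of the bounded invertible generator, are satisfied. The proposition gives $(\lambda\boxplus\mu,\Delta)\in\mathcal F^2(X\boxplus X)$ if and only if $WF^{-1}-I$ is Fredholm, with equal indices. A direct block multiplication computes
\[
WF^{-1}=\begin{pmatrix}U&0\\0&V^{-1}\end{pmatrix}\begin{pmatrix}0&I\\I&0\end{pmatrix}=\begin{pmatrix}0&U\\V^{-1}&0\end{pmatrix}.
\]
Finally I invoke Lemma \ref{l:boxplus} to exchange $(\lambda\boxplus\mu,\Delta)$ for $(\lambda,\mu)$: both Fredholmness and Fredholm index are preserved. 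Setting the common index to zero yields (a). The only real care needed is bookkeeping of the ordering of the summands in $(X\boxplus X)^\pm$ so that the sign of $-i(\omega\ominus\omega)$ comes out correctly on each factor; this is where a sign error would silently propagate but is otherwise the only potential pitfall.
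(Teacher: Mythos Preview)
Your argument is correct and follows essentially the same route as the paper: for (b) you do the same direct computation, and for (a) you identify $\lambda\boxplus\mu$ and $\Delta$ as graphs of $W=\bigl(\begin{smallmatrix}U&0\\0&V^{-1}\end{smallmatrix}\bigr)$ and the flip $F$ over the splitting $(X\boxplus X)^\pm=X^\pm\oplus X^\mp$, then compute $WF^{-1}=\bigl(\begin{smallmatrix}0&U\\V^{-1}&0\end{smallmatrix}\bigr)$ and invoke Proposition~\ref{p:fp-characterization} together with Lemma~\ref{l:boxplus}. The paper proceeds identically but more tersely, writing the same factorization and graph identifications (Equation~\eqref{e:product-graph}) without naming Proposition~\ref{p:fp-characterization} explicitly.
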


\begin{proof} \textit{a}. Let $P\colon X\to X^+$ denote the projection corresponding to the splitting.
Let $(\la,\mu)\in\Ff\Ll^2(X)$. Then we have
\[
\begin{pmatrix} 0&U\\ V\ii&0\end{pmatrix}
\ =\ \begin{pmatrix} U&0\\0& V\ii\end{pmatrix}
\begin{pmatrix} 0&I_{X^+}\\ I_{X^-}& 0 \end{pmatrix},
\]
and
\begin{equation}\label{e:product-graph}
\wt\GGG\begin{pmatrix}U&0\\0&V\ii\end{pmatrix}\ =\ \la\boxplus\mu,\quad\tand\quad
\wt\GGG\begin{pmatrix}0&I_{X^-}\\I_{X^+}&0\end{pmatrix}\ =\ \D,
\end{equation}
where $\wt\GGG$ denotes the graph of closed operators from $\range \Pp$ to $\range (I-\Pp)$ with
$\Pp:=P\boxplus (I-P)$. Consequently, $\begin{pmatrix} 0&U\\
V\ii&0\end{pmatrix}-I$ is a Fredholm operator of index $0$. Conversely, by Equation
\eqref{e:product-graph}, we can derive that $(\la,\mu)$ is a Fredholm pair of index 0 if $\begin{pmatrix} 0&U\\
V\ii&0\end{pmatrix}-I$  is a Fredholm operator of index 0.

\noi \textit{b}. For all $x_1, x_2\in X^+$ and $y_1,y_2\in X^-$, we have
\begin{align*}
(h^-\oplus h^+)&\Bigl(\begin{pmatrix} 0&U\\
V\ii&0\end{pmatrix}\begin{pmatrix} y_1\\ x_1\end{pmatrix}\/,\/\begin{pmatrix} 0&U\\
V\ii&0\end{pmatrix}\begin{pmatrix} y_2\\ x_2\end{pmatrix}\Bigr) \\
&=\ (h^-\oplus h^+)\left(\begin{pmatrix} Ux_1\\ V^{-1}y_1\end{pmatrix}\/,\/\begin{pmatrix} Ux_2\\ V^{-1}y_2\end{pmatrix}\right)\\
&=\ h^-(Ux_1,Ux_2)+h^+(V^{-1}y_1,V^{-1}y_2)\\
&=\ h^+(x_1,x_2)+h^-(y_1,y_2)\ =\ (h^-\oplus h^+)\left(\begin{pmatrix} y_1\\
x_1\end{pmatrix}\/,\/\begin{pmatrix} y_2\\ x_2\end{pmatrix}\right).
\end{align*}
\end{proof}

\begin{lemma}\label{l:3} The family $\left\{\begin{pmatrix} 0&U_s\\
V_s\ii&0\end{pmatrix}\right\}$ in \eqref{e:maslov} is a continuous family of closed operators such
that
$\begin{pmatrix} 0&U_s\\
V_s\ii&0\end{pmatrix}-I$ is Fredholm of index $0$ for each $s\in[a,b]$.
\end{lemma}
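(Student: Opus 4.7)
\medskip\noindent\textbf{Proof plan.}
The Fredholm index $0$ claim is immediate from Lemma~\ref{l:2}(a) applied pointwise at each $s$: each $(\lambda_s,\mu_s)$ is a Fredholm pair of Lagrangian subspaces of index $0$ by hypothesis, so $T_s-I$ is Fredholm of index $0$, where $T_s:=\begin{pmatrix}0 & U_s\\ V_s\ii & 0\end{pmatrix}$. Each $T_s$ is then closed as a bounded perturbation of the closed (Fredholm) operator $T_s-I$. The substantive content of the lemma is therefore the continuity of $\{T_s\}$ in the gap topology on the fixed Banach space $X$.

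My plan is to identify the graph $\GGG(T_s)\subset X\oplus X$ as the image of $\lambda_s\boxplus\mu_s$ under a norm continuous family of bounded invertible operators. Viewed as a closed operator on $X$, one has $T_s(y+x)=U_sx+V_s\ii y$ for $x\in\dom U_s$ and $y\in\ran V_s$; the decomposition $y+x$ merely records the $X_s^-\oplus X_s^+$ splitting of $X$ and does not affect the underlying operator. I would then introduce the bounded linear involution
\[
\Phi_s(u,v)\ :=\ \bigl(P_s u+(I-P_s)v,\ (I-P_s)u+P_s v\bigr) \quad\text{on }X\oplus X.
\]
A direct computation, writing a typical element of $\lambda_s\boxplus\mu_s$ as $(x+U_s x,\,V_s\ii y+y)$ with $x\in\dom U_s$, $y\in\ran V_s$, and applying $P_s$ and $I-P_s$ componentwise, then yields the identity
\[
\GGG(T_s)\ =\ \Phi_s\bigl(\lambda_s\boxplus\mu_s\bigr) \quad\text{in }X\oplus X.
\]

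Given this identification, the continuity conclusion follows formally. The family $\{P_s\}$ is norm continuous by Definition~\ref{d:continuous-banach}(a), so $\{\Phi_s\}$ is a norm continuous family of bounded invertible operators on $X\oplus X$. The gap continuity of $\{\lambda_s\}$ and of $\{\mu_s\}$ (built into Definition~\ref{d:continuous-banach}(b)) implies that of $\{\lambda_s\boxplus\mu_s\}$ on $X\oplus X$, and the image of a gap continuous family of closed subspaces under a norm continuous family of bounded invertible operators is again gap continuous; both facts are standard in the framework recorded in Appendix~\ref{ss:gap}. The only point of real delicacy is thus the verification of the graph identity above; once it is in place the continuity assertion is automatic.
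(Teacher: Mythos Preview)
Your proof is correct and follows essentially the same route as the paper. The paper's own argument is terse: it simply points back to the proof of Lemma~\ref{l:2}(a), where the identification \eqref{e:product-graph} expresses $\la_s\boxplus\mu_s$ as a (twisted) graph in $X\oplus X$, and says this ``shows'' that $\GGG(T_s)$ varies continuously. Your explicit involution $\Phi_s$ makes transparent the passage from $\la_s\boxplus\mu_s$ to the ordinary graph $\GGG(T_s)$ that the paper leaves to the reader; the two arguments are otherwise the same in spirit and in substance.
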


\begin{proof} The proof of the above Lemma \ref{l:2}a shows that the graph of $\left\{\begin{pmatrix} 0&U_s\\
V_s\ii&0\end{pmatrix}\right\}$ is a continuous family of closed subspaces of $X\oplus X$. Then the
family
$\left\{\begin{pmatrix} 0&U_s\\
V_s\ii&0\end{pmatrix}\right\}$ is a continuous family of closed operators. By Lemma \ref{l:2}a we
have that
$\begin{pmatrix} 0&U_s\\
V_s\ii&0\end{pmatrix}-I$ is Fredholm of index $0$ for each $s\in[a,b]$.
\end{proof}
That ends the proof of Theorem \ref{t:main}.

\medskip
The proof of Lemma \ref{l:2}a leads to the following important result:

\begin{proposition}\label{p:boxplus}
Let $\{X,\w_s\}$, $s\in[a,b]$ be a continuous family of weak symplectic forms for $X$ with a
continuous family of symplectic splittings $(X,\w_s,X_s^+, X_s^-)$ in the sense of Definition
\ref{d:continuous-banach}a and a corresponding family of projections $\{P_s\colon X\to X_s^+\}$.
Let $\{\la_s,\mu_s\}$, $s\in[a,b]$ be a continuous curve in $\Ff\Ll^2(X)$. We denote the generating
operators by $U_s$, respectively $V_s$\/.


\noi (a) If $V_s$ is bounded and has a bounded inverse for each $s\in [0,1]$, then we have
\begin{equation}\label{e:maslov2}
\Mas\{\la_s,\mu_s; P_s\}\ =\ \SF_{\ell}\{U_sV_s\ii\},
\end{equation}
where $\ell:=(0,+\infty)$ with upward co-orientation. The spectral flow is defined in the sense of
Proposition \ref{p:unitary-admissible}e.

\noi (b) We have
\begin{align}\label{e:maslov1}
\Mas\{\la_s\boxplus\mu_s,\D;{\Pp}_s\}
\ &=\ \Mas\{\la_s,\mu_s;P_s\}\\
& =\ \Mas\{\mu_s,\la_s;I-P_s\}, \qquad\quad\text{ in $(X,-\w_s)$},
\label{e:maslov3}\\
& =\ \Mas\{\D,\la_s\boxplus\mu_s;I-{\Pp}_s\}, \quad\!\text{in $(X,-\w_s)\boxplus (X,\w_s)$},
\label{e:maslov4}
\end{align}
where $\Pp_s:=P_s\boxplus (I-P_s)$\,.
\end{proposition}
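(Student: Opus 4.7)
The strategy is to compare the spectral flow definitions of the various Maslov indices by writing down the block matrices in the right coordinates, and then to exploit one elementary observation: the map $\mu \mapsto +\sqrt{\mu}$ is an orientation-preserving local diffeomorphism at $\mu = 1$ that sends the positive real axis to itself. Consequently, a continuous family of eigenvalues $\mu_s$ near $1$ crosses $\ell = (0,+\infty)$ upward if and only if $+\sqrt{\mu_s}$ does, with the same multiplicities and signs. For $h$-unitary operators this is just the trivial identity $\arg(\sqrt{\mu}) = \tfrac{1}{2}\arg(\mu)$. With this in hand, each identity reduces to an explicit block-matrix calculation plus a squaring trick.

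For part (a), assuming $V_s$ bounded and boundedly invertible, I first verify using Proposition \ref{p:fp-characterization} that $U_sV_s\ii$ is a closed $h_s^-$-unitary operator on $X_s^-$ with $U_sV_s\ii - I_{X_s^-}$ Fredholm of index zero, so that $\SF_\ell\{U_sV_s\ii\}$ is well defined via Proposition \ref{p:unitary-admissible}(e). The eigenvalue equations $U_s x = \lambda y$ and $V_s\ii y = \lambda x$ for $M_s := \begin{pmatrix} 0 & U_s \\ V_s\ii & 0 \end{pmatrix}$ combine to $U_sV_s\ii y = \lambda^2 y$ with $x = \lambda\ii V_s\ii y$, giving a multiplicity-preserving bijection of nonzero eigenvalues under $\lambda \leftrightarrow \lambda^2$. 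The squaring-orientation principle then delivers \eqref{e:maslov2}.

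For \eqref{e:maslov1}, a short coordinate computation shows that the generating operator of $\la_s \boxplus \mu_s$ with respect to the splitting $\Pp_s = P_s \boxplus (I-P_s)$ of $X \boxplus X$ is $W_s := \diag(U_s, V_s\ii) : X_s^+ \oplus X_s^- \to X_s^- \oplus X_s^+$, while that of $\D$ is the swap $W_s' : (z_+, z_-) \mapsto (z_-, z_+)$. Writing out $M^{\mathrm{LHS}}_s := \begin{pmatrix} 0 & W_s \\ (W_s')\ii & 0 \end{pmatrix}$ on the four components $(X_s^- \oplus X_s^+) \oplus (X_s^+ \oplus X_s^-)$ and eliminating the two swap components, the eigenvalue system again reduces to $M_s \binom{a}{b} = \lambda^2 \binom{a}{b}$; the same squaring argument yields \eqref{e:maslov1}. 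For \eqref{e:maslov3}, the change $\w_s \leadsto -\w_s$ exchanges the roles of $X_s^\pm$, so in the new splitting $I - P_s$ the generating operators of $\la_s, \mu_s$ become $U_s\ii, V_s\ii$ (now graphs $X_s^- \to X_s^+$); the block matrix defining $\Mas\{\mu_s,\la_s;I-P_s\}$ is therefore $\begin{pmatrix} 0 & V_s\ii \\ U_s & 0 \end{pmatrix}$, which is conjugate to $M_s$ by the fixed swap $\begin{pmatrix} 0 & I \\ I & 0 \end{pmatrix}$. Since spectral flow is invariant under conjugation by a constant invertible operator, \eqref{e:maslov3} follows. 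Finally, \eqref{e:maslov4} is obtained by applying \eqref{e:maslov3} to the pair $(\la_s \boxplus \mu_s, \D)$ in the product symplectic space with splitting $\Pp_s$, and then invoking \eqref{e:maslov1}.

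The main obstacle is not the matrix algebra but a careful check that the spectral flow is actually transported by the squaring bijection $\lambda \leftrightarrow \lambda^2$: the definition in Proposition \ref{p:unitary-admissible}(e) relies on admissible neighborhoods in $\ell$ with upward co-orientation, and one has to confirm that $\mu \mapsto +\sqrt{\mu}$ carries such a neighborhood, along with its co-orientation, to another valid admissible neighborhood. A secondary, purely technical point is ensuring that $M_s$, $M^{\mathrm{LHS}}_s$ and their $(-I)$-shifts are closed with dense domain and have Fredholm shift of index zero so that the spectral flow is actually defined; for $M^{\mathrm{LHS}}_s$ this is exactly what Lemmas \ref{l:2} and \ref{l:3} supply when applied in the box-plus setting.
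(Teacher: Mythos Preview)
Your proof is correct and follows essentially the same lines as the paper's: the squaring bijection $\lambda \leftrightarrow \lambda^2$ for part (a), a conjugation/identification argument for \eqref{e:maslov3}, and the combination of these for \eqref{e:maslov4}.

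There is one organizational difference worth pointing out for \eqref{e:maslov1}. The paper observes that the generating operator $W_s'$ of the diagonal $\Delta$ is the swap $X_s^+\oplus X_s^-\to X_s^-\oplus X_s^+$, which is bounded with bounded inverse; hence part (a) applies \emph{directly} in the box-plus space and gives
\[
\Mas\{\la_s\boxplus\mu_s,\D;\Pp_s\}\ =\ \SF_\ell\bigl\{W_s(W_s')^{-1}\bigr\}\ =\ \SF_\ell\Bigl\{\begin{pmatrix}0&U_s\\V_s^{-1}&0\end{pmatrix}\Bigr\}\ =\ \Mas\{\la_s,\mu_s;P_s\},
\]
since $W_s(W_s')^{-1}$ is precisely $M_s$. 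In other words, \eqref{e:maslov1} is literally an instance of \eqref{e:maslov2}, and no second squaring computation is needed. Your route---redoing the eigenvalue reduction for $M_s^{\mathrm{LHS}}$ versus $M_s$---is valid but duplicates work already encapsulated in (a); recognising this shortcut streamlines the argument.

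A minor remark on \eqref{e:maslov3}: the two block matrices $\begin{pmatrix}0&U_s\\V_s^{-1}&0\end{pmatrix}$ and $\begin{pmatrix}0&V_s^{-1}\\U_s&0\end{pmatrix}$ are in fact the \emph{same} closed relation on $X$, merely displayed with respect to the two orderings $X_s^-\oplus X_s^+$ and $X_s^+\oplus X_s^-$ of the splitting. So your ``conjugation by the fixed swap'' is really an identity at the level of operators on $X$; no issue arises from the splitting varying with $s$.
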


\begin{proof}
\textit{a}. By our assumption, we have
$$\dim\ker(z^2I-U_sV_s^{-1})=\dim\ker\left(zI-\begin{pmatrix} 0&U_s\\
V_s\ii&0\end{pmatrix}\right)$$ for all $z\in\C\setminus\{0\}$. By Proposition
\ref{p:unitary-admissible} and Lemma \ref{l:2}, both the total algebraic multiplicities of the
spectrum of the matrices $U_sV_s^{-1}$ and $\begin{pmatrix} 0&U_s\\
V_s\ii&0\end{pmatrix}$ near $1$ are $\dim\ker(I-U_sV_s^{-1})$, and each spectrum of them with
finite algebraic multiplicity is on $S^1$. Then (a) follows from the definition of the Maslov index
and Proposition \ref{p:unitary-admissible}.

\noi\textit{b}. Let $\wt\GGG$ denote the graph of closed operators from $\range \Pp_s$ to $\range
(I-\Pp_s)$\,. By Equations \eqref{e:maslov}, \eqref{e:product-graph} and \eqref{e:maslov2} we have
\begin{align*}
\Mas\{\la_s\boxplus\mu_s,\D;{\Pp}_s\} &\ =\  \Mas
\Bigl\{\wt\GGG\begin{pmatrix}U_s&0\\0&V_s\ii\end{pmatrix},
\wt\GGG\begin{pmatrix}0&I_{X_s^-}\\I_{X_s^+}&0\end{pmatrix};
\Pp_s\Bigr\} \\
&\ =\  \SF_{\ell}\Bigl\{\begin{pmatrix} U_s&0\\0& V_s\ii\end{pmatrix}
\begin{pmatrix} 0&I_{X_s^+}\\ I_{X_s^-}& 0 \end{pmatrix}\Bigr\}\\
&\ =\  \SF_{\ell}\Bigl\{\begin{pmatrix} 0&U_s\\
V_s\ii&0\end{pmatrix}\Bigr\} \ =\ \Mas\{\la_s,\mu_s; P_s\}.
\end{align*}
So (\ref{e:maslov1}) is proved.

For the symplectic space $(X,-\omega_s)$ with symplectic splitting $(X,-\omega_s,X_s^-,X_s^+)$, the
generating operators of $\lambda_s,\mu_s$ are $U_s^{-1},V_s^{-1}$ respectively. Note that
\[
\begin{pmatrix}0&V_s^{-1}\\U_s&0\end{pmatrix}
=\begin{pmatrix}0&I_{X^+_s}\\I_{X^-_s}&0\end{pmatrix}
\begin{pmatrix} 0&U_s\\ V_s\ii&0\end{pmatrix} \begin{pmatrix}0&I_{X^+_s}\\I_{X^-_s}&0\end{pmatrix}.%
\]
By the definition of the Maslov index we have
$$\Mas\{\mu_s,\lambda_s;I-P_s\}_{\text{in }(X,-\omega)}
\ =\ \SF_{\ell}\left\{\begin{pmatrix}0&V_s^{-1}\\U_s&0\end{pmatrix}\right\}\ =\
\SF_{\ell}\left\{\begin{pmatrix} U_s&0\\0& V_s\ii\end{pmatrix}\right\}\ =\
\Mas\{\lambda_s,\mu_s;P_s\}.$$ So (\ref{e:maslov3}) is proved. (\ref{e:maslov4}) follows from
(\ref{e:maslov3}) and (\ref{e:maslov1}).
\end{proof}

From the properties of our general spectral flow, as observed at the
end of our Appendix, we get all the basic properties of the Maslov
index (see S. E. Cappell, R. Lee, and E. Y. Miller \cite[Section
1]{CaLeMi94} for a more comprehensive list).

\begin{proposition}\label{p:maslov-properties}
(a) The Maslov index is {\em invariant under homotopies} of curves
of Fredholm pairs of Lagrangian subspaces with fixed endpoints. In
particular, the Maslov index is invariant under {\em
re-parametrization} of paths.

\noi (b) The Maslov index is additive under {\em catenation}, i.e.,
\[
\Mas\bigl\{{\la}_1* {\la}_2,
\mu_1 * \mu_2;P_s*Q_s\bigr\} \ =\ \Mas\bigl\{{\la}_1,\mu_1;P_s\bigr\} +
\Mas\bigl\{{\la}_2,\mu_2;Q_s\bigr\}\,,
\]
where $\{{\la}_i(s)\},\{{\mu}_i(s)\},\, i=1,2$ are continuous paths
with ${\la}_1(1)={\la}_2(0)$, ${\mu}_1(1)={\mu}_2(0)$ and
\[
({\la}_1*{\la}_2)(s) \ :=\  \begin{cases}
{\la}_1(2s), \quad & 0\leq s\leq \12\,,\\
{\la}_2(2s-1), \quad & \12 < s\leq 1\,,
\end{cases}
\]
and similarly $\mu_1*\mu_2$ and $\{P_s\}*\{Q_s\}$\,.

\noi (c) The Maslov index is {\em natural} under symplectic action:
let $\{(X',\w_s')\}$ be a second family of symplectic Banach spaces
and let
\begin{equation*}
L_s\in \operatorname{Sp}(X,\w_s;X',\w_s')\ \ :=\ \ \{L\in \Bb(X,X')\mid L \text{ invertible and
$\w_s'(Lx,Ly)=\w_s(x,y)$}\},
\end{equation*}
such that $\{L_s\}$ is a continuous family of bounded operators. Then $\{X'=L_s(X_s^+)\oplus
L_s(X_s^-)\}$ is a continuous family of symplectic splittings of $\{(X',\w_s')\}$ inducing
projections $\{Q_s\}$, and we have
\[
\Mas\{\la_s,\mu_s;P_s\}\ =\  \Mas\{L_s\la_s,L_s\mu_s;Q_s\}.
\]

\noi (d) The Maslov index {\em vanishes}, if $\dim (\la_s \cap \mu_s)$ is constant for all $s\in
[0,1]$.

\noi (e) {\em Flipping}. We have
\[
\Mas\{\la_s,\mu_s;P_s\}+ \Mas\{\mu_s,\la_s;P_s\} \ =\  \dim
\la_0\cap\mu_0 -\dim\la_1\cap\mu_1\/.
\]
\end{proposition}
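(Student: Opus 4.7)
The plan is to reduce each of (a)--(e) to a corresponding property of the spectral flow $\SF_\ell$ applied to the continuous family
\[
B_s := \begin{pmatrix} 0 & U_s \\ V_s^{-1} & 0 \end{pmatrix}
\]
that defines the Maslov index in \eqref{e:maslov}. By Lemmas \ref{l:2} and \ref{l:3} together with Proposition \ref{p:unitary-admissible}, $\{B_s\}$ is an admissible, spectral continuous family of $(h_s^-\oplus h_s^+)$-unitary operators whose spectrum near $1$ is confined to $S^1$ and for which $B_s - I$ is Fredholm of index $0$. Claims (a) and (b) are then immediate: homotopy invariance with fixed endpoints and catenation additivity are among the defining properties of $\SF_\ell$ for admissible families, as established in Appendix \ref{ss:A3}; a homotopy (respectively catenation) of pairs $(\la_s,\mu_s)$ with the required continuity lifts to a homotopy (respectively catenation) of the families $\{B_s\}$ in the class of admissible operators.

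For (c), I would decompose the symplectomorphism via $L_s x = L_s^+ x^+ + L_s^- x^-$, where $L_s^\pm := L_s|_{X_s^\pm}$. A direct computation shows that in the new splitting $L_s(X_s^+)\oplus L_s(X_s^-)$, the Lagrangian $L_s\la_s$ is the graph of $U_s' := L_s^- U_s (L_s^+)^{-1}$, and analogously for $\mu_s$, with the induced inner products pulled back correctly because $L_s$ preserves $\w$. Hence the block matrix defining $\Mas\{L_s\la_s,L_s\mu_s;Q_s\}$ is conjugate to $B_s$ by the continuous family $\diag(L_s^-,L_s^+)$ of bounded invertible operators. Since such a conjugation identifies spectra and spectral projections at every $s$, it preserves $\SF_\ell$.

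For (d), Lemma \ref{l:2}(a) and the isomorphism $\ker(B_s - I)\simeq \la_s\cap\mu_s$ give $\dim\ker(B_s - I) = \dim(\la_s \cap \mu_s)$, which is constant by hypothesis. Since the spectrum of $B_s$ near $1$ lies on $S^1$ and varies continuously (Proposition \ref{p:unitary-admissible}(c)), any eigenvalue distinct from the permanent $1$-eigenspace that actually met $1$ would momentarily enlarge $\ker(B_s - I)$, contradicting constancy; hence no crossings of $\ell$ occur and $\SF_\ell\{B_s\}=0$. For (e), the block matrix for the swapped pair $(\mu_s,\la_s)$ is $\begin{pmatrix}0 & V_s\\ U_s^{-1} & 0\end{pmatrix} = B_s^{-1}$, so $\Mas\{\mu_s,\la_s;P_s\}=\SF_\ell\{B_s^{-1}\}$. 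Unitarity of $B_s$ on $S^1$ gives $\sigma(B_s^{-1}) = \{\bar\lambda : \lambda \in \sigma(B_s)\}$, so the reflection $e^{i\theta}\mapsto e^{-i\theta}$ reverses the direction of each eigenvalue's passage across $1$. Interior upward crossings for $B_s$ thus match downward crossings for $B_s^{-1}$ and cancel in the sum $\SF_\ell\{B_s\}+\SF_\ell\{B_s^{-1}\}$. The only surviving contribution comes from the asymmetric endpoint convention in the definition of $\SF_\ell$, giving $\dim\ker(B_0-I)-\dim\ker(B_1-I) = \dim(\la_0\cap\mu_0) - \dim(\la_1\cap\mu_1)$.

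The main obstacle I expect is (e): one needs to justify carefully that reflecting the spectrum across the real axis indeed negates each interior crossing, and that the residual boundary term picks up precisely the stated difference of endpoint intersection dimensions—this hinges on the exact asymmetric endpoint convention built into $\SF_\ell$ in Appendix \ref{ss:A3}. A secondary subtlety is (c), where the conjugating family $\diag(L_s^-,L_s^+)$ acts between varying target spaces; one must check that spectral-flow invariance under such $s$-dependent similarities is covered by the framework in the Appendix, and not just by similarities on a single fixed Banach space.
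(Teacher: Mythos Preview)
Your approach is exactly what the paper intends: it states only that the properties follow ``from the properties of our general spectral flow, as observed at the end of our Appendix'' and supplies no further argument. Your reductions of (a)--(d) are correct and more explicit than anything the paper offers.

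On the two points you flagged: for (c), the worry about $s$-dependent conjugation is legitimate but easy to dispel. The conjugating family $\diag(L_s^-,L_s^+)$ carries the graph of $B_s$ onto the graph of the transported operator as closed subspaces, and continuity of families in Appendix~\ref{ss:A3} is precisely graph-gap continuity; since bounded invertible conjugation preserves spectra and spectral projections pointwise, admissibility, spectral continuity, and hence $\SF_\ell$ transfer.

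For (e), your identification of the swapped block matrix with $B_s^{-1}$ is correct and the strategy is the right one, but be warned: carrying the computation through with the paper's formula \eqref{e:spectral-flow} actually gives
\[
\SF_\ell\{B_s\}+\SF_\ell\{B_s^{-1}\}
=\sum_k\bigl(\dim\ker(B_{s_k}-I)-\dim\ker(B_{s_{k-1}}-I)\bigr)
=\dim(\la_1\cap\mu_1)-\dim(\la_0\cap\mu_0),
\]
the \emph{negative} of the stated identity. (Use that conjugation swaps $N^+$ and $N^-$, that $\dim\ran P_{N_k^-}(B_s)+\dim\ran P_{N_k^+}(B_s)=\dim\ran P_{N_k}(B_s)-\dim\ker(B_s-I)$ by Proposition~\ref{p:unitary-admissible}a, and that $\dim\ran P_{N_k}(B_s)$ is constant on each subinterval.) Your instinct that the endpoint convention is the crux is exactly right; either part~(e) carries a sign slip or an unstated convention differs. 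Your method is sound---just do not be surprised if the honest computation hands you the opposite sign.
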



We can not claim that the Maslov index, $\Mas\{\la_s,\mu_s;P_s\}$ is always independent of the
splitting projection $P_s$ in general Banach spaces. However, we have the following result.

\begin{proposition}\label{p:mas-independence}
Let $\{(X,\w_s)\}$ be a continuous family of {\em strong} symplectic Banach spaces and let
$\{X=X_{s,{\rho}}^+\oplus X_{s,{\rho}}^-\}$ be two continuous families of symplectic splittings in
the sense of Definition \ref{d:continuous-banach}a with projections $P_{s,{\rho}}\colon X\to
X_{s,{\rho}}^+$ for $s\in [0,1]$ and ${\rho}=0,1$. Let $\{(\la_s,\mu_s)\}$ be a continuous curve of
Fredholm pairs of Lagrangian subspaces of $\{(X,\w_s)\}$. Then

\noi (a) \quad $\index (\la_s,\mu_s)=0$ for all $s\in [0,1]$; and

\noi (b) \quad $\Mas\{\la_s,\mu_s;P_{s,0}\} =
\Mas\{\la_s,\mu_s;P_{s,1}\}$ .
\end{proposition}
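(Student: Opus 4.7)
My plan for \textbf{part (a)} is to upgrade the dimension inequality $\dim(\la+\mu)^\w \le \dim X/(\la+\mu)$ from Corollary \ref{c:isotropic-index}(a) to an equality, using the strong symplectic hypothesis. Strong symplectic means that the bounded map $J_s\colon X\to X^*$ of Remark \ref{r:lagrangian}(d) is a Banach space isomorphism, under which the symplectic annihilator $L^\w$ corresponds to the ordinary annihilator $J_s(L)^0\subset X^*$ of $L\subset X$. For the closed, finite-codimension subspace $L:=\la_s+\mu_s$ (closed by Remark \ref{r:redholm-pairs}), standard Banach space duality gives the exact identity $\dim(\la_s+\mu_s)^\w = \dim X/(\la_s+\mu_s)$. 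Since $\la_s,\mu_s$ are Lagrangian, $\la_s^\w=\la_s$ and $\mu_s^\w=\mu_s$, and the purely algebraic identity $(\la_s+\mu_s)^\w=\la_s^\w\cap \mu_s^\w$ yields $\dim \la_s\cap\mu_s = \dim X/(\la_s+\mu_s)$, i.e.\ $\index(\la_s,\mu_s)=0$.

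For \textbf{part (b)} I would run a rectangle argument on $[0,1]^2$. First, construct a jointly continuous two-parameter family of projections $\{P_{s,t}\}_{(s,t)\in[0,1]^2}$ such that each $P_{s,t}$ is a symplectic splitting projection for $(X,\w_s)$ and $P_{s,0},P_{s,1}$ match the given data. Let $U_{s,t}, V_{s,t}$ be the generators of $\la_s,\mu_s$ with respect to $P_{s,t}$ produced by Lemma \ref{l:lagrangian-representation}, and set
\[
B_{s,t}\ :=\ \begin{pmatrix} 0 & U_{s,t}\\ V_{s,t}^{-1} & 0 \end{pmatrix}.
\]
By the proof of Lemma \ref{l:3} applied in the two-parameter setting and by Proposition \ref{p:unitary-admissible}, this is a jointly continuous family of closed operators with $B_{s,t}-I$ Fredholm of index $0$, admissible with respect to $\ell=(0,+\infty)$. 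The standard two-dimensional homotopy invariance of spectral flow then gives
\[
\SF_{\ell}\{B_{s,0}\}_{s}\ -\ \SF_{\ell}\{B_{s,1}\}_{s}\ =\ \SF_{\ell}\{B_{0,t}\}_{t}\ -\ \SF_{\ell}\{B_{1,t}\}_{t}.
\]
The left-hand side equals $\Mas\{\la_s,\mu_s;P_{s,0}\}-\Mas\{\la_s,\mu_s;P_{s,1}\}$. Each term on the right is the Maslov index of a path in $\Ff\Ll^2(X)$ with constant Lagrangian components $(\la_0,\mu_0)$ or $(\la_1,\mu_1)$, so $\dim \la_s\cap\mu_s$ is trivially constant along that path; by property (d) of Proposition \ref{p:maslov-properties} both terms vanish. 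The desired equality follows.

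\textbf{The main obstacle} is the first step: constructing the jointly continuous homotopy $\{P_{s,t}\}$ of symplectic splittings. One needs path-connectedness of the space of symplectic splitting projections of each fixed $(X,\w_s)$, in a manner that depends continuously on $s$. Under the strong hypothesis, this can be approached by a polar-type construction: given two such projections $P_0,P_1$, the invertibility of $J_s$ lets one define an infinitesimal symplectic generator $L$ (a suitable ``logarithm'' produced from $P_0$ and $P_1$) and interpolate by $P_t:=e^{tL}P_0e^{-tL}$; in the Hilbert case, one may instead first invoke Lemma \ref{l:weak-strong} to normalize $J_s^2=-I$ and then interpolate in the canonical unitary parametrization of Lagrangian subspaces coming from Lemma \ref{l:strong-symplectic}. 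Joint continuity in $s$ is then inherited from continuity of $P_{s,0}, P_{s,1}$ and $J_s$. The secondary point of 2D homotopy invariance of spectral flow for admissible families in the sense of Appendix \ref{ss:A3} is routine: by Proposition \ref{p:unitary-admissible}(c)--(e) the relevant spectrum near $\ell$ clusters in $S^1$ at isolated eigenvalues of finite algebraic multiplicity, reducing the claim to the well-known finite-dimensional rectangle identity for signed eigenvalue crossings.
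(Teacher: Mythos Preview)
Your argument for (a) is correct and in fact cleaner than the paper's. The paper reduces to the Hilbert situation with $J_s^2=-I$ and then appeals to the ``well-studied standard case''; your direct duality computation $L^\w=J_s^{-1}(L^0)$ together with $\dim L^0=\dim X/L$ for closed finite-codimensional $L$ is more to the point and stays in the Banach setting.

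For (b) your high-level strategy coincides exactly with the paper's: build a two-parameter family $\{P_{s,\rho}\}_{(s,\rho)\in[0,1]^2}$ of symplectic-splitting projections, run the rectangle identity coming from homotopy invariance plus path additivity (Proposition \ref{p:maslov-properties}(a),(b)), and kill the two ``vertical'' edges via Proposition \ref{p:maslov-properties}(d) since $(\la_s,\mu_s)$ is constant there. The only substantive difference is in the construction of the interpolating splittings, which you yourself flag as the main obstacle. The paper's device here is worth knowing because it is both explicit and avoids any logarithm/exponential machinery: from each given splitting one obtains a genuine inner product
\[
\lla x^+ + x^-,\, y^+ + y^-\rra_{s,\rho}\ :=\ -i\w_s(x^+,y^+)+i\w_s(x^-,y^-)\qquad(\rho=0,1),
\]
and then one simply takes the \emph{convex combination} $\lla\cdot,\cdot\rra_{s,\rho}:=(1-\rho)\lla\cdot,\cdot\rra_{s,0}+\rho\lla\cdot,\cdot\rra_{s,1}$. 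With respect to this interpolated inner product one defines $J_{s,\rho}$ by $\w_s(x,y)=\lla J_{s,\rho}x,y\rra_{s,\rho}$ (bounded invertible since $\w_s$ is strong), and $P_{s,\rho}$ is the spectral projection onto the positive part of $-iJ_{s,\rho}$. Joint continuity in $(s,\rho)$ is then immediate from the continuity of $\w_s$, $P_{s,0}$, $P_{s,1}$.

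By contrast, your sketched ``polar-type'' construction $P_t=e^{tL}P_0e^{-tL}$ has real gaps: you have not specified $L$, nor argued why such an $L$ exists taking $P_0$ to $P_1$ at $t=1$ (two symplectic splittings need not be conjugate by an element in the identity component of the symplectic group in an obvious way), nor why this depends continuously on $s$. Your alternative via Lemma~\ref{l:strong-symplectic} conflates interpolation of Lagrangians with interpolation of splittings. None of this is fatal to the overall plan, but the paper's linear interpolation of inner products is the missing concrete ingredient that turns your outline into a proof.
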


\begin{note} Commonly, one assumes $J^2=-I$ in strong symplectic analysis
and defines the Maslov index with respect to the induced
decomposition. In view of Lemma \ref{l:weak-strong}, the point of
the preceding proposition is that the Maslov index is independent of
the choice of the metrics.
\end{note}

\begin{proof} {\it a}. Using $-i\w_s$, we make $(X,\w_s)$ into a symplectic Hilbert
space and deform the metric such that $J_s^2=-I$. Clearly, the
dimensions entering into the definition of the Fredholm index do not
change under the deformation. So, we are in the well-studied
standard case.

\noi {\it b}. We recall that our two families of symplectic splittings define two families of
Hilbert structures for $X$ defined by
\[
\lla x_{s,{\rho}}^+ + x_{s,{\rho}}^- , y_{s,{\rho}}^+ + y_{s,{\rho}}^- \rra_{s,{\rho}} \
:=\  -i\w_s(x_{s,{\rho}}^+,y_{s,{\rho}}^+) + i\w_s(x_{s,{\rho}}^-,y_{s,{\rho}}^-)
\]
for $x_{s,{\rho}}^+, y_{s,{\rho}}^+ \in H_{s,{\rho}}^+, x_{s,{\rho}}^-, y_{s,{\rho}}^- \in
H_{s,{\rho}}^-$, and ${\rho}=0,1$. For any ${\rho}\in [0,1]$ we define
\[
\lla x,y\rra_{s,{\rho}} \ :=\  (1-{\rho})\lla x,y\rra_{s,0} + {\rho}\lla x,y\rra_{s,1}\/.
\]
Then all $(X,\lla\cdot,\cdot\rra_{s,{\rho}})$ are Hilbert spaces.

Define $J_{s,{\rho}}$ by $\w_s(x,y)=\lla J_{s,{\rho}}x,y\rra_{s,{\rho}}$ and let $X_{s,{\rho}}^\pm$
denote the positive (negative) space of $-iJ_{s,{\rho}}$ and $P_{s,{\rho}}$ the orthogonal
projection of $X$ onto $X_{s,{\rho}}^+$\/. Then the two-parameter family $\{J_{s,{\rho}}\}$ is a
continuous family of invertible operators, $\{P_{s,{\rho}}\}$ is continuous, and
$\{H_{s,{\rho}}^+\}$ is continuous. So $\Mas\{\lambda_s,\mu_s;P_{s,\rho}\}$ is well-defined. By
Proposition \ref{p:maslov-properties}a and \ref{p:maslov-properties}b, we have
\begin{multline*}\Mas\{\lambda_0,\mu_0;P_{0,\rho};0\le\rho\le 1\}+\Mas\{\lambda_s,\mu_s;P_{s,0};0\le s\le 1\}
\\
=\ \Mas\{\lambda_s,\mu_s;P_{s,1};0\le s\le 1\}+\Mas\{\lambda_1,\mu_1;P_{1,\rho};0\le\rho\le 1\}.
\end{multline*}
By Proposition \ref{p:maslov-properties}d, we have
$$\Mas\{\lambda_0,\mu_0;P_{0,\rho};0\le\rho\le 1\}\ =\ \Mas\{\lambda_1,\mu_1;P_{1,\rho};0\le\rho\le 1\}\ =\ 0.$$
So we obtain
$$\Mas\{\lambda_s,\mu_s;P_{s,0};0\le s\le 1\}\ =\ \Mas\{\lambda_s,\mu_s;P_{s,1};0\le s\le 1\}.$$
\end{proof}

\subsection{Comparison with the real (and strong) category}\label{ss:comparison2real}

For a fixed strong symplectic Hilbert space $X$, choosing one single Lagrangian subspace $\la$
yields a decomposition $X=\la\oplus J\la$\,. This decomposition was used in \cite[Definition
1.5]{BoFu98} (see also \cite[Theorem 3.1]{BoFuOt01} and \cite[Proposition 2.14]{FuOt02}) to give
the first functional analytic definition of the Maslov index, though under the somewhat restrictive
(and notationally quite demanding) assumption of {\it real} symplectic structure. Up to the sign,
our Definition \ref{d:continuous-banach}b is a true generalization of that previous definition.
More precisely:

Let $(H,\w)$ be a real symplectic Hilbert space with
\[
\w(x,y)\ =\ \lla Jx,y\rra,\ J^2\ =\ -I, \ J^t\ =\ -J.
\]
Clearly, we obtain a symplectic decomposition $H^+\oplus H^- =
H\otimes\C$ with the induced complex strong symplectic form
$\w_{\C}$ by
\[
H^\pm\ :=\  \{(I\mp i J)\z\mid \z\in H\}.
\]

\begin{definition}\label{d:complex-generator}
We fix one (real) Lagrangian subspace $\la\< H$.
\newline a) Then there is a real linear
isomorphism $\f\colon H\cong\la\otimes\C$ defined by $\f(x+Jy):=x+i y$ for all $x,y\in\la$.
\newline b) For $A=X+JY\colon H\to H$ with $X,Y\colon H\to H$ real linear and
\begin{equation}\label{e:real-complex-def0}
X(\la)\< \la,\ Y(\la)\< \la,\quad\tand\quad XJ\ =\ JX,\ YJ\ =\ JY,
\end{equation}
we define
\[
\f_*(A)\ :=\ \f\circ A\circ \f\ii \ =\ X+i Y, \quad\ol A_{\la}\ :=\ X-JY,\quad A^{t_{\la}}\ :=\
X^t+JY^t,\,
\]
where $X^t,Y^t$ denote the real transposed operators.
\newline c) Let $\mu$ be a second Lagrangian subspace of $H$ and let $\wt V:H\to H$ with $\wt
VJ=J\wt V$ be a real generating operator for $\mu$ with respect to the orthogonal splitting
$H=\la\oplus J\la$, i.e., $\mu=\wt V(J\la)$ and $\f_*(\wt V)$ is unitary. Then we define the
complex generating operator for $\mu\otimes \C$ with respect to $\la$ by $S_{\la}(\wt V):=\f_*(\wt
V)\f_*\bigl(\wt V^{t_{\la}}\bigr)$.
\end{definition}

\begin{note} The complex generating operator for $\mu\otimes \C$ with respect to $\la$ was
defined by J. Leray in \cite[Section I.2.2, Lemma 2.1]{Le78} and elaborated in the references given
at the beginning of this subsection.
\end{note}

\begin{lemma}\label{l:mas-bofu=mas-zhu}
Let $(\la,\mu)$ be any pair of Lagrangian subspaces of $H$ (in the real category). Let $\wt V\colon
H\to H$ with $\wt VJ=J\wt V$ be a real generating operator for $\mu$ with respect to the orthogonal
splitting $H=\la\oplus J\la$. Let $U,V\colon H^+\to H^-$ denote the unitary generating operators
for $\la\otimes\C$ and $\mu\otimes\C$, i.e., we have
\[
\la\otimes \C\ =\ \GGG(U)\ \tand \ \mu\otimes \C\ =\ \GGG(V).
\]
Then we have $VU\ii=-\ol{S_{\la}(\wt V)}$, where $S_{\la}(\wt V)$ denotes the complex generating
operator for $\mu\otimes \C$ with respect to $\la$, as introduced in the preceding Definition.
\end{lemma}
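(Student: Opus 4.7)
The plan is to compute both sides of the claimed equation in a common concrete model provided by the real-linear isomorphisms $\psi^\pm \colon H \to H^\pm$, $\zeta \mapsto (I\mp iJ)\zeta$. A direct check shows that $\psi^+$ is complex linear when $H$ is equipped with complex structure $+J$, while $\psi^-$ is complex linear for $-J$. In these parametrizations the reference Lagrangian $\la\otimes\C$ is the graph of the $\la$-conjugation $C_\la \colon x+Jy\mapsto x-Jy$: decomposing an arbitrary $x+iy \in \la\otimes\C$ (with $x,y\in\la$) into its $H^{\pm}$-components gives parameters $\zeta_+ = \tfrac12(x+Jy)$ and $\zeta_- = \tfrac12(x-Jy) = C_\la \zeta_+$. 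Hence $U$ is parametrized by $C_\la$.

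An analogous computation for $\mu = \wt V(J\la)$, exploiting $\wt V J = J\wt V$, writes a generic element of $\mu\otimes\C$ as $J\wt V y_1 + iJ\wt V y_2$ with $y_1,y_2\in\la$ and yields
\begin{equation*}
\zeta_+ \;=\; \tfrac12\wt V(J y_1 - y_2), \qquad \zeta_- \;=\; \tfrac12\wt V(J y_1 + y_2) \;=\; -\tfrac12\wt V\,C_\la(Jy_1 - y_2).
\end{equation*}
Therefore $V$ is parametrized by $-\wt V\, C_\la\, \wt V^{-1}$, and combining with $U\leftrightarrow C_\la$ (an involution) shows that $VU^{-1}$ is parametrized by
\begin{equation*}
-\wt V\, C_\la\, \wt V^{-1}\, C_\la \;=\; -\wt V\, (\overline{\wt V}_\la)^{-1},
\end{equation*}
using the readily verified identity $C_\la\, A\, C_\la = \overline{A}_\la$ for every $A$ admitted by Definition \ref{d:complex-generator}b, together with the fact that $\la$-conjugation is a ring homomorphism on such operators.

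It remains to transport this expression to $\la\otimes\C$ via $\varphi$. Writing $W := \varphi_*(\wt V)$, the parametrized $VU^{-1}$ becomes $-\bar W\cdot W^{-1}$ on $\la\otimes\C$. On the other hand, a direct check of Definition \ref{d:complex-generator}b gives $\varphi_*(\wt V^{t_\la}) = W^t$, so $S_\la(\wt V) = W\cdot W^t$; the unitarity of $W$ yields $W^{-1}=W^* = \overline{W^t}$, whence
\begin{equation*}
\overline{S_\la(\wt V)} \;=\; \bar W \cdot \overline{W^t} \;=\; \bar W\cdot W^{-1},
\end{equation*}
which matches $VU^{-1}$ up to the claimed sign.

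The main obstacle is bookkeeping: the parametric versions of $H^+$ and $H^-$ carry opposite complex structures ($+J$ versus $-J$), so one must identify $H^-$ with $\la\otimes\C$ by a complex linear map (the natural choice being the inverse of the graph projection $\pi_-\colon \la\otimes\C \to H^-$, up to a real normalization constant that cancels in the final identity). It is precisely this mismatch of complex structures that produces the complex conjugation on the right-hand side of the formula, and it is the unitarity of $\wt V$ which at the last step converts the transpose $W^t$ appearing in Leray's construction of $S_\la$ into the inverse $W^{-1}$ appearing geometrically in $VU^{-1}$.
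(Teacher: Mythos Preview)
Your proof is correct and follows essentially the same route as the paper's: both parametrize $H^\pm$ by $H$ via $\zeta\mapsto(I\mp iJ)\zeta$, read off the parametric forms of $U$ and $V$ from the graph descriptions of $\la\otimes\C$ and $\mu\otimes\C$, and then identify $VU^{-1}$ on $H^-$ with $-\ol{S_\la(\wt V)}$ on $\la\otimes\C$ via the complex-linear isomorphism $w\mapsto (I+iJ)w$. The only organizational difference is where unitarity enters: the paper uses it early, rewriting $\wt V(\ol{\wt V}_\la)^{-1}$ as $\wt V\,\wt V^{t_\la}=\wt S_\la(\wt V)$ already in the description of $V$, whereas you keep the geometric form $-\wt V\,C_\la\,\wt V^{-1}$ for $V$ and invoke unitarity only at the very end to convert $W^t$ into $W^{-1}$; the net computation is the same.
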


\begin{proof}
At first we give some notations used later. For $\z=x+Jy\in H$ with $x,y\in\la$, we define $
\ol{\z}_{\la}:=\f\ii\bigl(\ol{\f(\z)}\bigr) =x-Jy$. Moreover, for $A=X+JY\colon H\to H$ with
$X,Y\colon H\to H$ real linear with (\ref{e:real-complex-def0}), we define $\wt
S_{\la}(A):=AA^{t_{\la}}$. Then we have $S_{\la}(A)=\f_*\bigl(\wt
S_{\la}(A)\bigr)$.

Now we give explicit descriptions of $U$ and $V$. It is immediate
that $U$ takes the form
\[
\begin{matrix}
U&\colon &H^+&\too&H^-\\
\ &\ &(I-i J)\z&\mapsto&(I+i J)\ol\z_{\la}\ .
\end{matrix}
\]
By the definition of $\wt V$, we have
\[
\mu\ =\ \wt V(J\la)\ =\ \{2\wt VJx+2i \wt VJy\mid x,y\in\la\}.
\]
We shall find $V\colon (I-i J)\z\mapsto (I+i J)\z_1$ with $\z,\z_1\in H$ such that
$\GGG(V)=\mu\otimes\C$, i.e., we shall find $\z_1$ to $\z=x+Jy$ such that
\begin{equation}\label{e:real-complex}
(I-i J)\z + (I+i J)\z_1 \ =\  2\wt VJx+2i \wt VJy \quad\text{ for
all $x,y\in \la$}.
\end{equation}
Comparing real and imaginary part of \eqref{e:real-complex} yields
$\z+\z_1=2\wt VJx$ and $-i J(\z-\z_1)=-i\wt VJy$, so
\[
\z\ =\ \wt V(Jx-y) \quad\tand\quad \z_1\ =\ \wt V(Jx+y).
\]
From the left equation we obtain $\ol\z_{\la}=-\ol{\wt
V}_{\la}(Jx+y)$. Since $\f_*(\wt V)$ is unitary, we obtain from the
right side
\[
\z_1\ =\ \wt V(Jx+y)\ =\ -\wt V\ol{\wt V}_{\la}\ii \ol\z_{\la} \ =\
-\wt V\wt V^t\ol\z_{\la}\ =\  -\wt S_{\la}(\wt V)\ol\z_{\la}\/.
\]
This gives
\[
\begin{matrix}
V&\colon &H^+&\too&H^-\\
\ &\ &(I-i J)\z&\mapsto&-(I+i J)\wt S_{\la}(\wt V)\ol\z_{\la}\/.
\end{matrix}
\]
So for all $z_1:=(I+i J)\z_1$ with $\z_1\in H$, we have
\begin{eqnarray*}
VU\ii z_1&\ =\ &-(I+i J)\wt S_{\la}(\wt V)\z_1\\
&\ =\ &-\wt S_{\la}(\wt V)(I+i J)\z_1\\
&\ =\ &-\wt S_{\la}(\wt V)(I-i J)\ol{\f(\z)}\\
&\ =\ &-\ol{\f_*(\wt S_{\la}(\wt V))}(I-i J)\ol{\f(\z)}\\
&\ =\ &-\ol{S_{\la}(\wt V)}(I+i J)\z_1\\
&\ =\ &-\ol{S_{\la}(\wt V)}z_1.
\end{eqnarray*}
That is, $VU\ii=-\ol{S_{\la}(\wt V)}$.
\end{proof}

With the preceding notation, we recall from \cite[Definition
1.5]{BoFu98} the definition of the Maslov index
\begin{equation}\label{e:maslov-bf}
\Mas_{\operatorname{BF}}\{\mu_s,\la\}\ :=\  \SF_{\ell'}\{S_{\la}(\wt V_s)\}
\end{equation}
of a continuous curve $\{\mu_s\}$ of Lagrangian subspaces in real symplectic Hilbert space $H$
which build Fredholm pairs with one fixed Lagrangian subspace $\la$. Here $\ell':=(-1-\e,-1+\e)$
with downward orientation.

\begin{corollary}\label{c:mas-bofu=mas-zhu}
\[
\Mas\{\la\otimes\C,\m_s\otimes \C\}\ =\
-\Mas_{\operatorname{BF}}\{\mu_s,\la\}.
\]
\end{corollary}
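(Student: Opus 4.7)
The plan is to reduce both sides of the claimed identity to spectral flows through the eigenvalue $1$ (respectively $-1$) of explicit families of unitary operators, and then match the crossings via Lemma~\ref{l:mas-bofu=mas-zhu}.

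First, since $(H,\w)$ is a strong symplectic Hilbert space and each $\mu_s$ is (real) Lagrangian, the complex generating operator $V_s\colon H^+\to H^-$ of $\mu_s\otimes\C$ is bounded with bounded inverse by Lemma~\ref{l:lagrangian-representation}(c). Applying Proposition~\ref{p:boxplus}(a) to the constant path $\la\otimes\C$ (with generator $U$) and to $\mu_s\otimes\C$ (with generator $V_s$), I obtain
\[
\Mas\{\la\otimes\C,\mu_s\otimes\C\}\ =\ \SF_{\ell}\{UV_s\ii\},
\]
where $\ell=(0,+\infty)$ carries the upward co-orientation.

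Next, I invoke Lemma~\ref{l:mas-bofu=mas-zhu}: $V_sU\ii=-\overline{S_\la(\wt V_s)}$. Since all operators involved are unitary (hence invertible) on $H^-$ or on $\la\otimes\C$, inverting the identity gives
\[
UV_s\ii\ =\ -\,\overline{S_\la(\wt V_s)\ii}.
\]
Complex conjugation and inversion individually send an eigenvalue $e^{i\theta}$ to $e^{-i\theta}$, so their composition is the identity on the unit circle; the factor $-1$ then rotates the spectrum by $\pi$. Consequently, a continuous eigenvalue branch $s\mapsto e^{i\theta(s)}$ of $S_\la(\wt V_s)$ matches the continuous branch $s\mapsto e^{i(\theta(s)+\pi)}$ of $UV_s\ii$, with $\theta(s)$ moving synchronously along the unit circle. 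In particular, $-1\in\s(S_\la(\wt V_s))$ iff $1\in\s(UV_s\ii)$, and the crossings are in one-to-one correspondence with identical algebraic multiplicities.

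Finally, one compares the two co-orientations. By definition (following \cite[Definition~1.5]{BoFu98}), $\Mas_{\operatorname{BF}}\{\mu_s,\la\}=\SF_{\ell'}\{S_\la(\wt V_s)\}$ with $\ell'$ a small real interval around $-1$ carrying the downward co-orientation, whereas $\SF_{\ell}\{UV_s\ii\}$ uses the upward co-orientation at $+1$. The correspondence from Step 2, combined with a careful comparison of these conventions, yields
\[
\SF_{\ell}\{UV_s\ii\}\ =\ -\,\SF_{\ell'}\{S_\la(\wt V_s)\},
\]
and chaining this with Step 1 gives the corollary. The main obstacle is precisely this sign-tracking in the last step: one must verify that rotating the spectrum by $\pi$ together with the compensating composition of conjugation and inversion, read against the two different cooriented transversals, produces exactly one net sign change, rather than two (which would give $+$) or none.
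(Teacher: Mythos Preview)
Your proposal is correct and follows essentially the same route as the paper: reduce $\Mas\{\la\otimes\C,\mu_s\otimes\C\}$ to $\SF_{\ell}\{UV_s\ii\}$ via Proposition~\ref{p:boxplus}(a), invoke Lemma~\ref{l:mas-bofu=mas-zhu}, and then track the effect of inversion, conjugation, and the factor $-1$ on the spectral flow. The paper carries out the sign-tracking you flag as the remaining obstacle by an explicit chain $\SF_{\ell}\{UV_s\ii\}=\SF_{\ell^-}\{V_sU\ii\}=\SF_{\ell^-}\{-\ol{S_\la(\wt V_s)}\}=\SF_{\ell}\{-S_\la(\wt V_s)\}=\SF_{\ell'}\{S_\la(\wt V_s)\}$, which is just the step-by-step version of your eigenvalue argument.
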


\begin{proof}
Let $\ell,\ell'$ denote small intervals on the real line close to 1, respectively -1 and give
$\ell$ the co-orientation from $-i$ to $+i$ and $\ell'$ vice versa. We denote by $\ell^-$ the
interval $\ell$ with reversed co-orientation. Then by our definition in \eqref{e:maslov},
Proposition \ref{p:boxplus}a, elementary transformations, the preceding lemma, and the definition
recalled in \eqref{e:maslov-bf}:
\begin{align*}
\Mas\{\la\otimes\C,\m_s\otimes \C\}
&\ =\  -\SF_{\ell}\{UV_s\ii\}\ =\  -\SF_{\ell^-}\{V_sU\ii\} \\
&\ =\  -\SF_{\ell^-}\{-\ol{S_{\la}(\wt V_s)}\}
\ =\  -\SF_{\ell}\{-S_{\la}(\wt V_s)\}\\
& \ =\  -\SF_{\ell'}\{S_{\la}(\wt V_s)\} \ =\
-\Mas_{\operatorname{BF}}\{\mu_s,\la\}.
\end{align*}
\end{proof}

\subsection{Invariance of the Maslov index under embedding}\label{ss:embedding}

We close this section by discussing the invariance of the Maslov index under embedding in a larger
symplectic space, assuming a simple regularity condition.

\begin{lemma}\label{l:maslov-embedding}
Let $\{(X,\w_s,X^+_s,X^-_s)\}$ be a continuous family of symplectic splittings for a (complex)
Banach space $X$ and let $\{(\la_s,\mu_s)\in \Ff\Ll^2(X,\w_s)\}$ be a continuous curve with
$\index(\la_s,\mu_s)=0$ for all $s\in [0,1]$. Let $Y$ be a second Banach space with a linear
embedding $Y\into X$ (in general neither continuous nor dense). We assume that
\[
\wt{\w}_s\ :=\ \w_s|_{Y\times Y} \quad\tand\quad Y_s^\pm\ :=\ X_s^\pm\cap Y
\]
yields also a continuous family $\{(Y,\wt{\w}_s,Y^+_s,Y^-_s)\}$ of symplectic splittings. Moreover,
we assume that $\dim(\la_s\cap \mu_s)-\dim(\la_s\cap \mu_s\cap Y)$ is constant and $(\la_s\cap
Y,\mu_s\cap Y)\in \Ff\Ll^2(Y,\wt{\w}_s)$ of index $0$ for all $s$, and that the pairs define also a
continuous curve in $Y$. Then we have
\[
\Mas\{\la_s,\mu_s;P_s\} \ =\  \Mas\{\la_s\cap Y,\mu_s\cap
Y;\wt{P}_s\},
\]
where $P_s$ and $\wt P_s$ denote the projections of $X$ onto $X_s^+$
along $X_s^-$ and the projections of $Y$ onto $Y_s^+$ along $Y_s^-$
respectively.
\end{lemma}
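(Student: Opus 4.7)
The plan is to use the spectral flow definition of the Maslov index combined with homotopy invariance and the vanishing property in Proposition \ref{p:maslov-properties}(d). The guiding idea is that the difference between the two Maslov indices should be the spectral flow of a family whose intersection dimension with the diagonal is constant, and hence vanishes.

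First I would translate everything into spectral flows. By definition \eqref{e:maslov},
\[
\Mas\{\la_s,\mu_s;P_s\}=\SF_{\ell}\bigl\{T_s\bigr\},\qquad
\Mas\{\la_s\cap Y,\mu_s\cap Y;\wt P_s\}=\SF_{\ell}\bigl\{\wt T_s\bigr\},
\]
where $T_s:=\begin{pmatrix}0&U_s\\V_s\ii&0\end{pmatrix}$ acts on $X^-_s\oplus X^+_s$ and $\wt T_s:=\begin{pmatrix}0&\wt U_s\\\wt V_s\ii&0\end{pmatrix}$ acts on $Y^-_s\oplus Y^+_s$. Since $Y^{\pm}_s=X^{\pm}_s\cap Y$ and the graphs of $\wt U_s,\wt V_s$ are obtained by intersecting those of $U_s,V_s$ with the corresponding product with $Y$, the operator $\wt T_s$ is the natural restriction of $T_s$ to the subspace of $X^-_s\oplus X^+_s$ where $T_s$ maps into $Y^-_s\oplus Y^+_s$. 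In particular, by Proposition \ref{p:unitary-admissible} and the isomorphism \eqref{e:unitary-counting}-analog for $T_s-I$, one has $\ker(T_s-I)\cong\la_s\cap\mu_s$ and $\ker(\wt T_s-I)\cong\la_s\cap\mu_s\cap Y$, and the codimension of the latter in the former is constant by hypothesis.

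Second I would localize the spectral flows. By Proposition \ref{p:unitary-admissible}(c)-(d) applied to both $\{T_s\}$ and $\{\wt T_s\}$, I can find a finite partition $0=s_0<s_1<\cdots<s_N=1$ and, on each subinterval $I_j:=[s_{j-1},s_j]$, a bounded open neighbourhood $N_j\subset\C$ of $1$ with the property that $\s(T_s)\cap N_j$ and $\s(\wt T_s)\cap N_j$ consist only of isolated eigenvalues of finite algebraic multiplicity, the corresponding Riesz projections $P_{N_j}(T_s)$ and $P_{N_j}(\wt T_s)$ depending continuously on $s\in I_j$. The spectral flow on $I_j$ is then the finite-dimensional spectral flow of the restrictions $T_s|_{\image P_{N_j}(T_s)}$ and $\wt T_s|_{\image P_{N_j}(\wt T_s)}$.

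Third, and this is the crucial point, I would show that on each $I_j$ the finite-dimensional block $T_s|_{\image P_{N_j}(T_s)}$ decomposes (after continuous conjugation) as the direct sum of $\wt T_s|_{\image P_{N_j}(\wt T_s)}$ and a complementary finite-dimensional family $R_s$ whose only eigenvalue in $N_j$ is $1$ with constant algebraic multiplicity equal to $\dim(\la_s\cap\mu_s)-\dim(\la_s\cap\mu_s\cap Y)$. The inclusion $\image P_{N_j}(\wt T_s)\hookrightarrow\image P_{N_j}(T_s)$ is obtained from $Y\hookrightarrow X$ together with the $T_s$-invariance of the image of $\wt T_s$'s spectral projection near $1$, which is guaranteed by the compatibility $\wt T_s=T_s|_Y$. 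Once this decomposition is established, additivity of spectral flow gives
\[
\SF_{\ell}\bigl\{T_s\evalat{I_j}\bigr\}=\SF_{\ell}\bigl\{\wt T_s\evalat{I_j}\bigr\}+\SF_{\ell}\{R_s\}.
\]
Since $R_s$ is a continuous family of $h$-unitary finite-dimensional operators whose $1$-eigenspace has constant dimension, Proposition \ref{p:maslov-properties}(d) (applied to the finite-dimensional Maslov index, or equivalently the definition of spectral flow) forces $\SF_{\ell}\{R_s\}=0$. Summing over $j$ finishes the proof.

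The main obstacle is Step~3: constructing the continuous decomposition $\image P_{N_j}(T_s)\cong\image P_{N_j}(\wt T_s)\oplus\image R_s$ and verifying that $R_s$ actually has $1$ as its only spectrum in $N_j$ with constant multiplicity. The delicate point is that the embedding $Y\into X$ is not assumed continuous, so the continuous dependence of $\image P_{N_j}(\wt T_s)$ inside $\image P_{N_j}(T_s)$ must be deduced from the stated continuity of the pairs $(\la_s\cap Y,\mu_s\cap Y)$ in $Y$ and of $(\la_s,\mu_s)$ in $X$, rather than from a direct topological embedding of one spectral projection into the other. Once this compatibility is verified, the rest of the argument is routine.
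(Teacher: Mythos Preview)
Your overall reduction to spectral flows of the block operators $T_s$ and $\wt T_s$ is exactly what the paper does: the lemma is stated as an immediate consequence of Lemma~\ref{l:sf-embedding} in the Appendix, applied with $A_s=T_s$ on $X_s^-\oplus X_s^+$ and $A_s|_{Y}=\wt T_s$ on $Y_s^-\oplus Y_s^+$. The hypothesis ``$\nu_\ell(A_s)-\nu_\ell(A_s|_{X_s})$ constant'' of that lemma translates, via $\ker(T_s-I)\simeq\la_s\cap\mu_s$ and $\ker(\wt T_s-I)\simeq\la_s\cap\mu_s\cap Y$, into precisely the assumption you use.

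Where you diverge is in Step~3. You attempt to produce a continuous \emph{direct-sum decomposition} $\image P_{N_j}(T_s)\cong\image P_{N_j}(\wt T_s)\oplus\image R_s$ with $R_s$ having spectrum only at $1$. The paper's proof of Lemma~\ref{l:sf-embedding} avoids this entirely: it is a pure counting argument. One has the obvious inequalities $\dim\ker(\wt T_s-\la I)^k\le\dim\ker(T_s-\la I)^k$ for every $\la\in N_j$ and $k\in\NN$; summing over $\la$ and $k$ gives $\dim\image P_{N_j}(\wt T_s)\le\dim\image P_{N_j}(T_s)$, with equality of these totals (up to the constant $m$) forcing \emph{equality} of each term for $\la\in N_j\setminus\ell$. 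Since the spectral flow is computed entirely from the eigenvalues off $\ell$, the two spectral flows agree.

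Your decomposition route is not wrong in spirit---the counting argument shows a posteriori that all the excess multiplicity sits at $\la=1$, which is what you want of $R_s$---but constructing a $T_s$-invariant complement varying continuously in $s$ is genuinely harder than necessary, and your own acknowledged obstacle (the non-continuous embedding $Y\into X$) obstructs the most naive construction. The counting argument needs no such complement and no compatibility of the two Riesz projections beyond the set-theoretic inclusion of generalized eigenspaces.
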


The lemma is an immediate consequence of Lemma \ref{l:sf-embedding}
of the Appendix.

\begin{acknowledgements} We would like to thank Prof. K. Furutani (Tokyo),
Prof. M. Lesch (Bonn) and Prof. R. Nest (Copenhagen) for inspiring discussions about this subject
and BS H. Larsen (Roskilde) for drawing the figures of the Appendix. Not least we thank the referee
for careful reading and an heroic effort comprising 191 thoughtful comments, corrections, and
helpful suggestions which led to many improvements of the presentation. The referee clearly went
beyond the call of duty, and we are indebted.
\end{acknowledgements}



\appendix

\setcounter{secnumdepth}{2}

\setcounter{theorem}{0}

\setcounter{equation}{0}



\addtocontents{toc}{\medskip\noi}
\section{Spectral flow}\label{s:appendix1}

The spectral flow for a one parameter family of linear self-adjoint Fredholm operators was
introduced by M. Atiyah, V. Patodi, and I. Singer \cite{AtPaSi75} in their study of index theory on
manifolds with boundary. Since then other significant applications have been found. Later this
notion was made rigorous for curves of bounded self-adjoint Fredholm operators in J. Phillips
\cite{Ph96} and for gap-continuous curves of self-adjoint (generally unbounded) Fredholm operators
in Hilbert spaces in \cite{BoLePh01} by the Cayley transform. The notion was generalized to the
higher dimensional case in X. Dai and W. Zhang \cite{DaZh98} for Riesz-continuous families, and to
more general operators in \cite{Wo85,Zh01,ZhLo99}.

For manifolds with singular metrics, there may appear linear relations (cf. C. Bennewitz
\cite{Be72} and M. Lesch and M. Malamud \cite{LeMal03}). It is well known that many statements on
{\em relations} can be translated into those on the resolvents in the realm of {\em operator}
theory, see, e.g., B. M.  Brown, G. Grubb, and I. G. Wood \cite{BrGrWo09}. It seems to us, however,
that this translation can not always be made globally, i.e., not for a whole curve of relations.

In this Appendix we shall provide a rigorous definition of the {\it spectral flow} of {\it
spectral-continuous} curves of {\it admissible} closed linear relations in Banach spaces relative
to a co-oriented real curve  $\ell\<\C$. (All the preceding terms will be explained).

\subsection{Gap between subspaces}\label{ss:gap}

Let $\Ss(X)$ denote the set of all closed subspaces of a Banach space
$X$.

\subsubsection{The gap topology}
The {\em gap} between subspaces $M,N\in\Ss(X)$ is defined by
\begin{equation}\label{e:gap}
{\hat\delta}(M,N)\ :=\ \mmax\{\delta(M,N),\delta(N,M)\},
\end{equation}
where $\delta(M,N):=\sup\{\dist(x,N)\mid x\in M,\|x\|=1\}$, $\delta(M,\{0\}):=1$ for $M\ne\{0\}$,
and $\delta(\{0\},N):=0$. The sets $U(M,\varepsilon)=\{N\in\Ss(X)\mid \delta(M,N)<\varepsilon\}$,
where $M\in\Ss(X)$ and $\varepsilon>0$, form a basis for the so-called {\em gap topology} on
$\Ss(X)$. This is a complete metrizable topology on $\Ss(X)$ \cite[Section IV.2.1]{Ka76}.

Let $X$ be a Hilbert space. 
Then the gap between closed subspace
$M,N$ is a metric for $\Ss(X)$ and can be calculated by
\begin{equation}\label{e:gap-hilbert}
\hat\delta(M,N)\ =\ \|P_M-P_N\|,
\end{equation}
where $P_M,P_N$ denote the orthogonal projections of $X$ onto $M,N$
respectively, \cite[Theorem I.6.34]{Ka76}.

We have the following lemma.

\begin{lemma}\label{l:sub-quotion}Let $X$ be a Hilbert space, and $Y$
be a closed linear subspace of $X$. Then the mapping $M\mapsto M+Y$ induces a bijection from the
space $\Ss(X,Y)$ of closed linear subspaces of $X$ containing $Y$ onto the space
$\Ss(X/Y)=\Ss(Y^\perp)$ of closed linear subspaces of $X/Y$, which preserves the metric.
\end{lemma}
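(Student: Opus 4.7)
The plan is to construct the bijection concretely via the orthogonal decomposition $X = Y \oplus Y^\perp$, and then deduce the isometry property from the identity $\hat\delta(M,N)=\|P_M-P_N\|$ of Equation~\eqref{e:gap-hilbert}.

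First I would record the canonical identification $\Ss(X/Y) = \Ss(Y^\perp)$. The orthogonal projection $P_{Y^\perp}\colon X \to Y^\perp$ descends to an isometric isomorphism $X/Y \to Y^\perp$ of Hilbert spaces (the inverse being $z \mapsto z+Y$); this is the statement that the quotient norm on $X/Y$ agrees with the norm inherited from $Y^\perp \subset X$. Under this isometry, closed subspaces of $X/Y$ correspond bijectively and metrically to closed subspaces of $Y^\perp$, with matching gap metrics (apply \eqref{e:gap-hilbert} on either side).

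Next I would define $\Phi\colon \Ss(X,Y) \to \Ss(Y^\perp)$ by $\Phi(M) := M \cap Y^\perp$. Since $Y \subset M$, orthogonality yields the decomposition $M = Y \oplus (M\cap Y^\perp)$, which shows that $M \cap Y^\perp$ is closed in $Y^\perp$ and that the inverse map $\Psi(N) := Y \oplus N$ is well-defined with $\Phi\circ \Psi = \operatorname{id}$, $\Psi\circ\Phi = \operatorname{id}$. The claim of the lemma that $M \mapsto M+Y$ gives this same bijection is then the tautology $M+Y=M$ for $M\supset Y$, followed by passage to the quotient.

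For the metric statement, the key fact is that for any $M \in \Ss(X,Y)$ the orthogonal projection decomposes as $P_M = P_Y + P_{M\cap Y^\perp}$, where $P_{M\cap Y^\perp}$ is the orthogonal projection of $X$ onto $M \cap Y^\perp$ (equivalently: zero on $Y$, and the projection within $Y^\perp$ on $Y^\perp$). Given $M_1,M_2 \in \Ss(X,Y)$ the $P_Y$ terms cancel, so
\[
P_{M_1} - P_{M_2} \;=\; P_{M_1\cap Y^\perp} - P_{M_2\cap Y^\perp},
\]
and both sides vanish on $Y$ while agreeing with the corresponding projections within $Y^\perp$ on $Y^\perp$. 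Hence the operator norms on $X$ and on $Y^\perp$ coincide. Applying \eqref{e:gap-hilbert} in $X$ on the left and in $Y^\perp$ on the right, this gives $\hat\delta(M_1,M_2) = \hat\delta(\Phi(M_1),\Phi(M_2))$, which (combined with the first step) is exactly the asserted isometry onto $\Ss(X/Y)$.

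There is no real obstacle here; the only point requiring a small amount of care is ensuring the identification $\Ss(X/Y)=\Ss(Y^\perp)$ is carried out as an identification of metric spaces (with the quotient-norm gap matching the Hilbert-space gap on $Y^\perp$), so that the norm computation above may be interpreted on either side.
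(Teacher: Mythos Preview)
Your proof is correct and follows essentially the same approach as the paper: both use the decomposition $P_{Y\oplus N}=P_Y+P_N$ for $N\subset Y^\perp$ (equivalently your $P_M=P_Y+P_{M\cap Y^\perp}$ for $Y\subset M$) to cancel the $P_Y$ terms and then apply \eqref{e:gap-hilbert}. The paper's argument is terser and runs from the $Y^\perp$ side, while you supply more detail on the bijection and the metric identification $\Ss(X/Y)\cong\Ss(Y^\perp)$, but the substance is the same.
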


\begin{proof} We view $X/Y$ as $Y^{\bot}$. Let $M,N\subset Y^{\bot}$ be
two closed subspaces and $P_M,P_N$ be the orthogonal projections
onto $M$, $N$ respectively. Then we have
$$\hat\delta(M+Y,N+Y)\ =\ \|P_{M+Y}-P_{N+Y}\|\ =\ \|P_M-P_N\|\ =\ \hat\delta(M,N).$$
\end{proof}

\subsubsection{Uniform properties}
In general, the distances $\d(M,N)$ and $\d(N,M)$ can be very different and, even worse, behave
very differently under small perturbations. However, for finite-dimensional subspaces of the same
dimension in a Hilbert space we can estimate $\d(M,N)$ by $\d(N,M)$ in a uniform way.

\begin{lemma}\label{l:estimate-delta}Let $X$ be a Hilbert space and $M,N$ be two
subspaces with $\dim M=\dim N=n\in\NN$. If
$\delta(N,M)<\frac{1}{\sqrt{n}}$, then we have
\begin{equation}\label{e:estimate-delta}\delta(M,N)
\le\frac{\sqrt{n}\,\delta(N,M)}{1-\sqrt{n}\,\delta(N,M)}.
\end{equation}
\end{lemma}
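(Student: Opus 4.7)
The plan is to pull everything to an orthonormal basis of $N$, so that the bound reduces to a single Cauchy--Schwarz estimate together with a reverse triangle inequality on the $M$-side. First I would pick an orthonormal basis $f_1,\ldots,f_n$ of $N$ and set $e_i := P_M f_i$, $u_i := f_i - e_i = (I-P_M)f_i$, so that $\|u_i\| = \dist(f_i,M) \le \delta(N,M) =: \delta$ for each $i$. An easy preliminary is that $P_M|_N$ is injective (hence bijective by equality of dimensions): if $P_M y = 0$ for some $y \in N$, then $\|y\| = \dist(y,M) \le \delta\|y\|$, and the hypothesis $\delta < 1/\sqrt{n} \le 1$ forces $y = 0$. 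Thus $\{e_1,\ldots,e_n\}$ is a basis of $M$.

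For an arbitrary $x \in M$ with $\|x\|=1$, I would expand $x = \sum_i a_i e_i$ uniquely and introduce its ``lift'' $y := \sum_i a_i f_i \in N$. Since $\{f_i\}$ is orthonormal, $\|y\| = \|a\|_2$, and $y - x = \sum_i a_i u_i$. The triangle inequality followed by Cauchy--Schwarz gives
\[
\|y - x\| \le \sum_{i=1}^n |a_i|\,\|u_i\| \le \delta \sum_{i=1}^n |a_i| \le \sqrt{n}\,\delta\,\|a\|_2,
\]
and the reverse triangle inequality then yields
\[
1 = \|x\| \ge \|y\| - \|y-x\| \ge (1 - \sqrt{n}\,\delta)\,\|a\|_2,
\]
so $\|a\|_2 \le 1/(1-\sqrt{n}\,\delta)$, using the hypothesis $\sqrt{n}\,\delta < 1$. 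Feeding this back gives
\[
\dist(x,N) \le \|x-y\| \le \sqrt{n}\,\delta\,\|a\|_2 \le \frac{\sqrt{n}\,\delta}{1-\sqrt{n}\,\delta},
\]
and taking the supremum over unit vectors $x\in M$ is the claim.

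The main technical point to watch is that the basis $\{e_i\}$ on the $M$-side is not orthonormal, so the bound on $\|a\|_2$ in terms of $\|x\|$ has to be obtained without invoking a quadratic-form computation with the Gram matrix of the $\{e_i\}$; using the reverse triangle inequality is precisely what produces the specific form $\sqrt{n}\,\delta/(1-\sqrt{n}\,\delta)$ stated in the lemma. (A Gram-matrix route would instead produce the sharper but structurally different bound $\delta/\sqrt{1-\sqrt{n}\,\delta}$, which implies the stated one since $\sqrt{n} \ge 1$, but the weaker form is all that is needed in the sequel.)
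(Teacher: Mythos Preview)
Your proof is correct and follows essentially the same route as the paper: choose an orthonormal basis of $N$, project to $M$ to obtain a basis there, lift a unit vector of $M$ back to $N$ via the coefficients, and combine Cauchy--Schwarz with the reverse triangle inequality to bound $\|a\|_2$ and then $\|x-y\|$. The only cosmetic difference is that the paper deduces linear independence of the projected vectors from the key lower bound $\|x\|\ge(1-\sqrt{n}\,\delta)\|a\|_2$ itself, whereas you first argue injectivity of $P_M|_N$ separately; both are fine.
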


\begin{proof}Let $y_1,\ldots,y_n$ be an orthonormal basis of $N$.
Let $x_k\in M$ denote the vectors with $\|x_k-y_k\|=\dist(y_k,M)$. Then
$\|x_k-y_k\|\le\delta(N,M)$.

For any $a_1,\ldots,a_n\in\C$, set $x=\sum_{k=1}^na_kx_k$. Then we
have
\begin{eqnarray}\|x\|&\ =\ &\|\sum_{k=1}^na_ky_k+\sum_{k=1}^na_k(x_k-y_k)\|
\ge\|\sum_{k=1}^na_ky_k\|-\sum_{k=1}^n|a_k|\|x_k-y_k\|\nonumber\\
&\ge&(\sum_{k=1}^na_k^2)^{\frac{1}{2}}-\sum_{k=1}^n|a_k|\,\delta(N,M)
\ge(1-\sqrt{n}\,\delta(N,M))(\sum_{k=1}^na_k^2)^{\frac{1}{2}}.\label{e:x-delta}
\end{eqnarray}
If $x=0$, by (\ref{e:x-delta}) we have $a_k=0$. Thus
$x_1,\ldots,x_n$ are linearly independent and therefore they form a
basis of $M$.

For any $x=\sum_{k=1}^n a_kx_k\in M$ with $\|x\|=1$, let $y=\sum_{k=1}^na_ky_k$. By
(\ref{e:x-delta}) we have
\begin{equation*}\|x-y\|\ =\ \|\sum_{k=1}^na_k(x_k-y_k)\|
\le\sum_{k=1}^n|a_k|\,\delta(N,M)
\le\frac{\sqrt{n}\,\delta(N,M)}{1-\sqrt{n}\,\delta(N,M)}.
\end{equation*}
Hence we have (\ref{e:estimate-delta}).
\end{proof}

Clearly, taking the sum of two closed subspaces is not a continuous operation in general, but
becomes continuous when fixing the dimension of the intersection and keeping the sum closed.

The following Lemma is well-known and the proof is omitted.

\begin{lemma}\label{l:continuity-ker-range}Let $X,Y$ be two Hilbert space
and $A_s\in\Bb(X,Y)$ be a norm-continuous family of semi-Fredholm bounded operators. If $\dim\ker
A_s$ is constant, then $\ker A_s\in\Ss(X)$ and $\range A_s\in\Ss(Y)$ are continuous families of
closed subspaces (continuous in the gap topology).
\end{lemma}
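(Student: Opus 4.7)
The plan is to show norm continuity of the orthogonal projections $P_s$ onto $\ker A_s$ and $Q_s$ onto $\ker A_s^*$; the lemma then follows from \eqref{e:gap-hilbert} together with $\range A_s = (\ker A_s^*)^\perp$ and the fact that orthogonal complementation is an isometry for the gap metric on closed subspaces of a Hilbert space.

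I would first pass to the norm-continuous family $T_s := A_s^*A_s$ of bounded non-negative self-adjoint operators on $X$, noting that $\ker T_s = \ker A_s$. Since $A_s$ is semi-Fredholm, $\range A_s$ is closed and the reduced minimum modulus $\gamma(A_s)$ is strictly positive; consequently $\spec(T_s) \subset \{0\}\cup[\gamma(A_s)^2,\|A_s\|^2]$. The crucial step is to establish a local uniform lower bound $\gamma(A_s)\geq c>0$ on some neighborhood $U$ of a fixed $s_0\in [a,b]$. This is precisely where the constancy hypothesis on $\dim\ker A_s$ is used: by upper semi-continuity of the spectrum for norm-continuous families of bounded self-adjoint operators (Kato, Ch.~IV), if $\gamma(A_{s_n})\to 0$ along some sequence $s_n\to s_0$, then spectral mass of $T_{s_n}$ would accumulate at $0$, forcing the rank of the spectral projection at $0$ to jump, in contradiction with the constancy of $\dim\ker A_s$.

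With the uniform gap in hand, pick a fixed small circle $\Gamma\subset\CC$ centered at $0$ of radius $c^2/2$, which lies in the resolvent set of every $T_s$ with $s\in U$. The Riesz projection
\[
P_s\ :=\ -\frac{1}{2\pi i}\oint_\Gamma(T_s-\zeta)^{-1}\,d\zeta
\]
then coincides with the orthogonal projection onto $\ker A_s$, and norm continuity of $s\mapsto(T_s-\zeta)^{-1}$ uniform in $\zeta\in\Gamma$ transfers to norm continuity of $\{P_s\}$. For the range, I would apply the same construction to the norm-continuous adjoint family $\{A_s^*\}$: Kato's stability theorem for semi-Fredholm operators forces $\index(A_s)$ to be locally constant, so the hypothesis that $\dim\ker A_s$ is constant also makes $\dim\ker A_s^*=\dim\coker A_s$ locally constant. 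The preceding argument applied to $A_s^*$ yields gap continuity of $\{\ker A_s^*\}$, and hence of $\{\range A_s\}=\{(\ker A_s^*)^\perp\}$.

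The main obstacle is exactly the uniform positive lower bound on $\gamma(A_s)$; this is the heart of the statement. Without the constancy hypothesis, $\dim\ker$ can jump downward under arbitrarily small perturbations (e.g.\ the family $A_s=\diag(s,1)$ on $\CC^2$ has $\gamma(A_0)=1$ but $\gamma(A_s)=s$ for $s>0$, and $\ker A_s$ fails to vary gap-continuously at $s=0$), and no uniform Riesz contour around $0$ can be drawn.
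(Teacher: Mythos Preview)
The paper declares this lemma ``well-known'' and omits the proof entirely, so there is nothing to compare your route against. Your argument via $T_s=A_s^*A_s$ and Riesz projections is a standard and correct way to prove it; the sketch of step~3 is slightly informal (the cleanest phrasing is: draw $\Gamma$ of radius $r<\gamma(A_{s_0})^2$, note $\Gamma\subset\rho(T_s)$ for $s$ near $s_0$ by norm-continuity, so $\rank P_s=\rank P_{s_0}=\dim\ker A_{s_0}$, and since $\ker A_s\subset\range P_s$ with $\dim\ker A_s=\dim\ker A_{s_0}$ by hypothesis, equality holds), but the idea is right.

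One point deserves care in the genuinely semi-Fredholm (non-Fredholm) case. When you pass to $A_s^*$ to treat $\range A_s$ and invoke ``the preceding argument'', your step~3 justification (``the rank of the spectral projection at $0$ would jump'') presumes $\dim\ker A_s^*<\infty$; if $\dim\coker A_s=\infty$ this phrasing is vacuous. The fix is immediate: you need not re-derive a spectral gap for $A_s^*$, since $\gamma(A_s^*)=\gamma(A_s)\geq c$ is inherited from the first half. Then $0$ is isolated in $\spec(A_sA_s^*)$ (possibly with infinite multiplicity), the same contour $\Gamma$ sits in its resolvent set, and the Riesz integral gives the orthogonal projection onto $\ker A_s^*$, norm-continuous in $s$, with no dimension-counting needed. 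Alternatively, once you have $P_s$ and the uniform bound $\gamma(A_s)\geq c$, the orthogonal projection onto $\range A_s$ can be written directly as $Q_s=A_s(T_s+P_s)^{-1}A_s^*$, which is manifestly norm-continuous and avoids the adjoint altogether.
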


We recall the notion of semi--Fredholm pairs:
Let $M,N\in\Ss(X)$. The pair $M,N$ is called {\em (semi-)Fredholm}
if $M+N$ is closed in $X$, and both of (one of) the spaces $M\cap N$
and $\dim X/(M+N)$ are (is) finite dimensional. In this case, the
{\em index} of $(M,N)$ is defined by
\begin{equation}\label{e:index-pair}\index(M,N)\ :=\ \dim M\cap N-\dim
X/(M+N)\ \in\ \ZZ\cup\{-\infty,\infty\}.
\end{equation}
Note that by \cite[Remark A.1]{BoFu99} (see also \cite[Problem 4.4.7]{Ka76}),
$X/(M+N)$ of finite
dimension implies $M+N\in\Ss(X)$.

\begin{proposition}\label{p:continuous-intersection-sum}
Let $X$ be a Hilbert space and $n\in \NN$. Denote by ${\mathcal {SF}}^2_{1,n}(X)$
{\rm(}respectively ${\mathcal {SF}}^2_{2,n}(X)${\rm)} the set of semi-Fredholm pairs $(M,N)$ of
closed subspaces with $\dim M\cap N=n$ {\rm(}respectively $\dim X/(M+N)=n${\rm)}. Then the
following four natural mappings $\varphi_{k,l}\colon  \Ss\Ff^2_{l,n}(X)\to\Ss(X)$, $k,l=1,2$ are
continuous:
\[\varphi_{1,l}(M,N)\ :=\ M\cap N,\qquad
\varphi_{2,l}(M,N)\ :=\ M+N.
\]
\end{proposition}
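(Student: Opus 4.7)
My plan is to reduce all four mappings to a single semi-Fredholm analysis via orthogonal-complement duality, and then realize intersection and sum simultaneously as kernel and range of a norm-continuous family of bounded semi-Fredholm operators of constant kernel dimension, so that Lemma~\ref{l:continuity-ker-range} delivers continuity in one stroke. Since in the Hilbert setting $P_{M^\perp}=I-P_M$, the map $D\colon M\mapsto M^\perp$ is an isometry on $\Ss(X)$ in the gap metric by~\eqref{e:gap-hilbert}. For a semi-Fredholm pair $(M,N)$, the sum $M^\perp+N^\perp$ is closed iff $M+N$ is, and one has
\[(M+N)^\perp\ =\ M^\perp\cap N^\perp,\qquad (M\cap N)^\perp\ =\ M^\perp+N^\perp.\]
Thus $D$ restricts to a homeomorphism $\mathcal{SF}^2_{1,n}(X)\cong\mathcal{SF}^2_{2,n}(X)$, with $D\circ\varphi_{1,1}=\varphi_{2,2}\circ D$ and $D\circ\varphi_{2,1}=\varphi_{1,2}\circ D$. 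Hence it suffices to prove continuity of $\varphi_{1,1}$ and $\varphi_{2,1}$ on $\mathcal{SF}^2_{1,n}(X)$.

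For the core case, fix $(M_0,N_0)\in\mathcal{SF}^2_{1,n}(X)$ and a gap-convergent family $(M_s,N_s)\to(M_0,N_0)$. The norm-continuity of $P_{M_s}$ and $P_{N_s}$ (from~\eqref{e:gap-hilbert}) lets me apply the Kato transport recalled in Remark~\ref{r:full-domain} separately to $\{M_s\}$ and $\{N_s\}$, producing for $s$ near $0$ norm-continuous families of bounded isomorphisms $U_s^M,U_s^N\in\Bb(X)$ with $U_0^M=U_0^N=I$, $U_s^M(M_0)=M_s$, and $U_s^N(N_0)=N_s$. Define
\[T_s\colon M_0\oplus N_0\too X,\qquad T_s(m,n)\ :=\ U_s^M m-U_s^N n.\]
Then $T_s$ is norm-continuous in $s$, $\range T_s=M_s+N_s$ (closed by hypothesis), and the bounded bijection $x\mapsto((U_s^M)^{-1}x,(U_s^N)^{-1}x)$ identifies $M_s\cap N_s$ with $\ker T_s$, with inverse $(m,n)\mapsto U_s^M m$; in particular $T_s$ is semi-Fredholm with $\dim\ker T_s=n$ constant in $s$.

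Lemma~\ref{l:continuity-ker-range} then yields gap-continuity of $\{\ker T_s\}$ and $\{\range T_s\}$. The range statement is exactly continuity of $\varphi_{2,1}$. For $\varphi_{1,1}$, I would lift a fixed basis of $\ker T_0$ to a norm-continuous frame of $\ker T_s$ by a second application of the Kato transport of Remark~\ref{r:full-domain} to the now gap-continuous family $\{\ker T_s\}$, and push it through the norm-continuous map $(m,n)\mapsto U_s^M m$ to obtain a norm-continuous frame of the finite-dimensional $M_s\cap N_s$, forcing gap-continuity. The duality of the first paragraph then transfers these to $\varphi_{2,2}$ and $\varphi_{1,2}$. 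The main subtlety is verifying that $T_s$ really has closed range and $\dim\ker T_s=n$ throughout the neighborhood, which is precisely where the fibered hypothesis $(M_s,N_s)\in\mathcal{SF}^2_{1,n}(X)$ enters decisively.
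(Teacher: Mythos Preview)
Your proof is correct and follows the same overall strategy as the paper: reduce the $l=2$ case to $l=1$ via orthogonal-complement duality, then realize $M\cap N$ and $M+N$ as kernel and range of a norm-continuous semi-Fredholm family and invoke Lemma~\ref{l:continuity-ker-range}.

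The difference lies in the choice of operator. The paper works directly with $(I-P_N)P_M$, viewed as a map $M\to N^\perp$: its kernel is $M\cap N$, and one checks $\ran((I-P_N)P_M)=(M+N)\cap N^\perp$, so closedness of $M+N$ is equivalent to closedness of this range, and the sum is recovered as $\ran((I-P_N)P_M)+N$. This is shorter and avoids any transport machinery, but the domain and codomain vary with $(M,N)$, so the application of Lemma~\ref{l:continuity-ker-range} tacitly relies on a local trivialization that the paper does not spell out. Your construction via the Kato transforms $U_s^M,U_s^N$ and the operator $T_s\colon M_0\oplus N_0\to X$ makes this trivialization explicit, at the price of the extra frame argument needed to pass from the gap-continuity of $\ker T_s\subset M_0\oplus N_0$ back to that of $M_s\cap N_s\subset X$. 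Both routes are sound; yours is more self-contained, the paper's more economical.
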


\begin{proof} (Communicated by R. Nest) Let $(M,N)\in\Ss(X)\times\Ss(X)$.
Let $P_M$ and $P_N$ denote the orthogonal projections of $X$ onto
$M$ and $N$ respectively. Then we have
\[
\range P_M+\range P_N\ =\ \range ((I-P_N)P_M)+\range P_N\,.
\]
So $M+N$ is closed if and only if $\range ((I-P_N)P_M)$ is closed, and the kernel of
$(I-P_N)P_M\in\Bb(\ran P_M,\ker P_N)$ is $M\cap N$. By Lemma \ref{l:continuity-ker-range}, the maps
$\varphi_{k,1}$, $k=1,2$ are continuous. Recall that taking orthogonal complements is continuous.
Then $\varphi_{k,2}$ is continuous by the fact that
\[\varphi_{k,2}(M,N)\ =\ \big(\varphi_{3-k,1}(M^\perp,N^\perp)\big)^\perp,\quad k=1,2.\]

\end{proof}

\subsection{Closed linear relations}\label{ss:clr}
This subsection discusses some general properties of closed linear
relations. For additional details, see Cross \cite{Cr98}.

\subsubsection{Basic concepts of closed linear relations}
Let $X,Y$ be two vector spaces. A {\em linear relation} $A$ between $X$ and $Y$ is a linear
subspace of $X\times Y$. As usual, the {\em domain}, the {\em range}, the {\em kernel} and the {\em
indeterminant part} of $A$ are defined by
\begin{eqnarray*}\dom(A)&\ :=\ &\{x\in X\mid \;\mbox{there exists}\;y\in Y
\;\mbox{such that}\; (x,y)\in A\}, \\
\ran A&\ :=\ &\{y\in Y\mid \;\mbox{there exists}\;x\in X \;\mbox{such
that}\; (x,y)\in A\},\\
\ker A&\ :=\ &\{x\in X\mid\;(x,0)\in A\},\\
A(0)&\ :=\ &\{y\in Y\mid\;(0,y)\in A\},
\end{eqnarray*}
respectively.

Let $X,Y,Z$ be three vector spaces. Let $A,B$ be linear relations
between $X$ and $Y$, and $C$ a linear relation between $Y$ and $Z$.
We define $A+B$ and $CA$ by
\begin{eqnarray}
A+B\ &:=&\ \{(x,y+z)\in X\times Y\mid\;(x,y)\in A, (x,z)\in B\},\label{e:a-add}\\
CA\ &:=&\ \{(x,z)\in X\times Z\mid\;\exists y\in Y\;\mbox{such that}\; (x,y)\in A, (y,z)\in
C\}\label{e:a-comp}.
\end{eqnarray}

\begin{definition}\label{d:clr} Let $X,Y$ be two Banach spaces. A
{\em closed linear relation} between $X,Y$ is a closed linear
subspace of $X\times Y$. We denote by $\CLR(X,Y)=\Ss(X\times Y)$ and
$\CLR(X)=\Ss(X\times X)$.
\end{definition}

Note that a linear relation $A$ between $X,Y$ is a graph of a linear operator if and only if
$A(0)=\{0\}$. In this case we shall still denote the corresponding operator by $A$. After
identifying an operator and its graph, we have the inclusions
$$\Bb(X,Y)\subset\Cc(X,Y)\subset\CLR(X,Y)$$
with the notations of Section \ref{ss:isometric}.

Let $A$ be a linear relation between $X,Y$. The {\em inverse}
$A^{-1}$ of $A$ is a always defined. It is the linear relation between $Y,X$ defined by
\begin{equation}\label{e:inverse-clr}
A^{-1}=\{(y,x)\in Y\times X;(x,y)\in A\}.
\end{equation}

\begin{definition}\label{d:fredholm-invertible} Let $X,Y$ be two Banach
spaces and $A\in\CLR(X,Y)$.
\begin{enumerate}
\item $A$ is called {\em Fredholm}, if $\dim\ker A<+\infty$,
$\ran A$ is closed in $Y$ and $\dim (Y/\ran A)<+\infty$. In this case, we define the {\em index} of
$A$ to be
\begin{equation}\label{e:index-clr}\index A\ =\ \dim\ker A-\dim (Y/\ran A).\end{equation}
\item $A$ is called {\em bounded invertible}, if $A^{-1}\in\Bb(Y,X)$.
\end{enumerate}\end{definition}

\begin{lemma}(a) $A$ is Fredholm, if and only if the pair $(A,X\times\{0\})$ is
a Fredholm pair of closed subspaces of $X\times Y$. In this case, $\index
A=\index(A,X\times\{0\})$.

\noi (b) $A$ is bounded invertible, if and only if $X\times X$ is the
direct sum of $A$ and $X\times\{0\}$.
\end{lemma}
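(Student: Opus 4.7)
Both parts follow by directly identifying the linear-algebra invariants of $A$ (kernel, range, and in (b), the direct-sum property) with those of the pair $(A,X\times\{0\})$ inside the ambient Banach space $X\times Y$. The argument is purely set-theoretic/topological, with the Closed Graph Theorem as the only nontrivial input (in (b)); no deep obstacle appears.

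\textbf{Part (a).} First I would compute the two natural invariants of the pair. Since
\[
A\cap(X\times\{0\}) \;=\; \{(x,0)\mid (x,0)\in A\} \;=\; \ker A\times\{0\},
\]
we have a natural isomorphism $A\cap(X\times\{0\})\cong\ker A$, so one finite-dimensionality condition matches with equal dimensions. Next, for any $(x,y)\in X\times\mathrm{ran}\,A$, pick $(x_1,y)\in A$ and write $(x,y)=(x_1,y)+(x-x_1,0)$; conversely every element of $A+(X\times\{0\})$ clearly has second coordinate in $\mathrm{ran}\,A$. Hence
\[
A+(X\times\{0\}) \;=\; X\times\mathrm{ran}\,A,
\]
so the quotient is
\[
(X\times Y)\big/\bigl(A+(X\times\{0\})\bigr) \;\cong\; Y/\mathrm{ran}\,A,
\]
matching the second finite-dimensionality condition with equal codimensions. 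The closedness issue is handled by Remark \ref{r:redholm-pairs}: once $(X\times Y)/(A+(X\times\{0\}))$ is finite-dimensional and both $A$ and $X\times\{0\}$ are closed, $A+(X\times\{0\})$ is automatically closed, which forces $\mathrm{ran}\,A$ to be closed in $Y$. Combining, $A$ is Fredholm iff $(A,X\times\{0\})\in\Ff^2(X\times Y)$, and the two indices coincide term by term.

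\textbf{Part (b).} I would read the direct-sum condition through the same two formulas. The intersection computation from (a) shows
\[
A\cap(X\times\{0\})=\{0\}\iff \ker A=\{0\},
\]
and the sum computation shows
\[
A+(X\times\{0\})=X\times Y\iff \mathrm{ran}\,A=Y.
\]
Thus $X\times Y=A\oplus(X\times\{0\})$ is equivalent to the conjunction $\ker A=\{0\}$ and $\mathrm{ran}\,A=Y$. It remains to match this conjunction with bounded invertibility. The inverse relation $A^{-1}\subset Y\times X$ satisfies $(A^{-1})(0)=\ker A$ and $\mathrm{dom}(A^{-1})=\mathrm{ran}\,A$, so the conjunction says exactly that $A^{-1}$ is a single-valued linear operator with full domain $Y$. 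Since $A$ is closed in $X\times Y$, the swapped subspace $A^{-1}$ is closed in $Y\times X$, so $A^{-1}$ is a closed linear operator defined on all of $Y$; the Closed Graph Theorem then yields $A^{-1}\in\mathcal{B}(Y,X)$, i.e.\ $A$ is bounded invertible. The converse is immediate: if $A^{-1}\in\mathcal{B}(Y,X)$ then trivially $A^{-1}$ is a function on all of $Y$, giving $\ker A=\{0\}$ and $\mathrm{ran}\,A=Y$.

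\textbf{Main obstacle.} There isn't really one; the statement is a bookkeeping lemma. The only small subtlety is remembering that the direct-sum condition in (b) does \emph{not} by itself force $A$ to be a graph (i.e.\ $A(0)=\{0\}$), and that this is consistent with the paper's definition of bounded invertibility, which only requires $A^{-1}$ to be a bounded operator. I would make sure to flag this so the equivalence is not misread as asserting that every bounded-invertible relation is automatically single-valued as a relation $X\to Y$.
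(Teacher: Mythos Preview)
Your proof is correct and follows exactly the paper's approach: both compute $A\cap(X\times\{0\})=\ker A\times\{0\}$ and $A+(X\times\{0\})=X\times\ran A$ and read off the Fredholm and direct-sum conditions from these identities. You supply more detail than the paper (the closedness remark and the explicit Closed Graph Theorem step in (b)), but the argument is the same.
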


\begin{proof} Our results follow from the fact that
\[
A\cap (X\times\{0\})\ =\ \ker A\times\{0\},\ \tand\ A+\left(X\times\{0\}\right)\ =\
\left(\{0\}\times\range A\right)+\left(X\times\{0\}\right).
\]
\end{proof}

\subsubsection{Spectral projections of closed linear relations}
\begin{definition}\label{d:spectral-clr}Let $X$ be a Banach space
and $A\in\CLR(X)$. Let $\zeta$ be a complex number. $\zeta$ is
called a {\em regular point} of $A$ if $A-\zeta I$ is bounded
invertible. Otherwise $\zeta$ is called a {\em spectral point} of $A$. We
denote the set of all spectral points of $A$ by $\sigma(A)$ and
the set of all regular points of $A$ by $\rho(A)$. The {\em resolvent} of
$A$ is defined by
\begin{equation}\label{e:resolvent}R(\zeta,A)\ =\ (A-\zeta I)^{-1},\quad
\zeta\in\rho(A).\end{equation}
\end{definition}

Let $X$ be a Banach space, and $A\in\CLR(X)$. Let $N\<\C$ be a bounded open subset. Assume that
$\sigma(A)\cap\partial N$ does not contain an accumulation point of $\sigma(A)\cap N$. Then there
exists an open subset $N_1\subset N$ such that
\begin{equation}\label{e:a-rev-admissible}
\ol{N_1}\< N,\
\partial N_1 \in C^1\/,\  \s(A)\cap N_1\ =\ \s(A)\cap N,
\quad\tand\quad \s(A)\cap\partial N_1\ =\ \emptyset,
\end{equation}
and the spectral projection
\begin{equation} \label{P_N}
P_N(A) \ :=\  -\frac 1{2\pi i}\int_{\partial N_1}(A-\zeta I)^{-1} d\zeta
\end{equation}
is well-defined and does not depend on the choice of $N_1$. We have the following lemma (cf.
\cite[Theorem III.6.17]{Ka76}):

\begin{lemma} \label{l:spectral-check}(a) We have
\begin{equation}\label{e:PA1}P_N(A)A\subset AP_N(A)\ =\ P_N(A)AP_N(A)+\{0\}\times
A(0),\end{equation}
where the composition is taken in the sense of \eqref{e:a-comp}.

\noi (b) We have
\begin{equation}\label{e:PA2}P_N(A)AP_N(A)\ =\ -\frac 1{2\pi i}\int_{\partial N_1}\zeta(A-\zeta
I)^{-1} d\zeta.
\end{equation}

\noi (c) If we view $P_N(A)AP_N(A)$ as a linear relation on $\ran(P_N(A))$, then we have
$P_N(A)AP_N(A)\in\Bb\bigl(\ran(P_N(A))\bigr)$, and
\begin{equation}\label{e:spectral-check} \s(A)\cap N \ =\  \s\bigl(
P_N(A)AP_N(A)\bigr).
\end{equation}
\end{lemma}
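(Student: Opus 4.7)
The plan is to follow the classical operator argument (compare Kato \cite{Ka76}, Theorem III.6.17) while carefully tracking the indeterminant part $A(0)$ through the relational compositions in \eqref{e:a-comp}. First I would collect two preparatory facts. The resolvent identity $R(\zeta,A)-R(\eta,A)=(\zeta-\eta)R(\zeta,A)R(\eta,A)$ for distinct $\zeta,\eta\in\rho(A)$ is a purely algebraic consequence of \eqref{e:a-comp} applied to $A-\zeta I$ and $A-\eta I$; from it the standard two-contour argument yields $P_N(A)^2=P_N(A)$. Next, if $\alpha\in A(0)$ then $(0,\alpha)\in A$, so $(\alpha,0)\in R(\zeta,A)$, and since $R(\zeta,A)$ is a bounded operator this forces $R(\zeta,A)\alpha=0$; integrating over $\partial N_1$ gives $A(0)\subset\ker P_N(A)$, which is the algebraic backbone of the whole argument.

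For (a), the key identity is
\[ (R(\zeta,A)u,\,u+\zeta R(\zeta,A)u)\in A\quad\text{for all }u\in X,\ \zeta\in\rho(A), \qquad(\ast) \]
a direct rewriting of $R(\zeta,A)=(A-\zeta I)^{-1}$. Since $A$ is closed in $X\times X$, integrating $(\ast)$ over $\partial N_1$ yields $(P_N(A)u,Tu)\in A$, where $Tu:=-\tfrac{1}{2\pi i}\int_{\partial N_1}\zeta R(\zeta,A)u\,d\zeta$ is a bounded operator on $X$. The commutation $P_N(A)A\subset AP_N(A)$ follows: for $(x,y)\in A$, the identity $R(\zeta,A)(y-\zeta x)=x$ gives $R(\zeta,A)y=x+\zeta R(\zeta,A)x$, whose integral produces $P_N(A)y=Tx$; combined with $(P_N(A)x,Tx)\in A$ (the integrated $u=x$ case of $(\ast)$) this yields $(P_N(A)x,P_N(A)y)\in A$, i.e., $(x,P_N(A)y)\in AP_N(A)$. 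For the decomposition $AP_N(A)=P_N(A)AP_N(A)+\{0\}\times A(0)$, the inclusion $\supset$ is immediate from the definitions; for $\subset$, given $(x,z)\in AP_N(A)$ with $(P_N(A)x,z)\in A$, the commutation applied to this pair gives $(P_N(A)x,P_N(A)z)\in A$, hence $(0,z-P_N(A)z)\in A$, and one writes $(x,z)=(x,P_N(A)z)+(0,(I-P_N(A))z)$ with summands in $P_N(A)AP_N(A)$ and $\{0\}\times A(0)$.

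A second two-contour computation using the resolvent identity and Cauchy's theorem establishes $P_N(A)T=T=TP_N(A)$; combined with the integrated $(\ast)$ this gives $(x,Tx)=(x,P_N(A)Tx)\in P_N(A)AP_N(A)$, so $T\subset P_N(A)AP_N(A)$. Conversely, if $(x,P_N(A)z)\in P_N(A)AP_N(A)$ with $(P_N(A)x,z)\in A$, the derivation of $P_N(A)y=Tx$ applied with $(P_N(A)x,z)$ in place of $(x,y)$ gives $P_N(A)z=T(P_N(A)x)=Tx$, proving (b). For (c), the identity $P_N(A)AP_N(A)=T$ together with $A(0)\subset\ker P_N(A)$ shows that the restriction of $P_N(A)AP_N(A)$ to $\ran P_N(A)$ is single-valued and coincides with the bounded operator $T|_{\ran P_N(A)}$, so it lies in $\Bb(\ran P_N(A))$. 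The spectral equality $\sigma(A)\cap N=\sigma(P_N(A)AP_N(A))$ is then obtained from the $A$-invariance of $X=\ran P_N(A)\oplus\ker P_N(A)$: since $P_N(A)$ commutes with every resolvent $R(\zeta,A)$, the relation $A$ splits as a direct sum of two relations whose spectra are $\sigma(A)\cap N$ and $\sigma(A)\setminus N$ respectively.

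The main obstacle will be the bookkeeping of the indeterminant part $A(0)$ in the relational compositions \eqref{e:a-comp}: the identification $T=P_N(A)AP_N(A)$ as graphs in $X\times X$, rather than merely as operators modulo $A(0)$, crucially relies on the preliminary containment $A(0)\subset\ker P_N(A)$ and on the contour-integral identities $P_N(A)T=T=TP_N(A)$. The spectral separation in (c) further requires showing that $A$ genuinely decomposes as a direct sum of relations along $X=\ran P_N(A)\oplus\ker P_N(A)$, not merely that $P_N(A)$ commutes with resolvents; this again uses $A(0)\subset\ker P_N(A)$ to identify the summand acting on $\ran P_N(A)$ with the bounded operator $T|_{\ran P_N(A)}$.
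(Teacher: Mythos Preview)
Your argument is correct and in fact carries out in full the details that the paper only sketches. The paper's proof is much terser and takes a slightly different route: rather than working with $A$ directly, it fixes a single regular point $z\in N\setminus N_1$, observes via the resolvent identity that $P_N(A)$ commutes with the \emph{bounded operator} $R(z,A)$, records the key containment $\ker R(z,A)=A(0)\subset\ker P_N(A)$, and then simply cites the classical bounded-operator version (Kato, Theorem~III.6.17) applied to $R(z,A)$. The spectral correspondence $\zeta\leftrightarrow (z-\zeta)^{-1}$ between $A$ and $R(z,A)$ transfers all three statements back. Both approaches rest on the same algebraic backbone $A(0)\subset\ker P_N(A)$, which you derive the same way. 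The paper's reduction is slicker because it avoids redoing the contour computations for projections and the operator $T$, while your self-contained treatment has the advantage of making explicit exactly where the indeterminant part $A(0)$ enters each relational composition --- which is precisely the point of stating the lemma for relations rather than operators.
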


\begin{proof}
Let $z\in N\setminus N_1$ be a regular point. Then we have
\[P_N(A)R(z,A)=R(z,A)P_N(A)=-\frac 1{2\pi i}\int_{\partial N_1}(z-\zeta)^{-1}(A-\zeta
I)^{-1} d\zeta.\] Since $R(z,A)$ is bounded and $0\ne (z-\zeta)^{-1}$ for all
$\zeta\in\sigma(A)\cap N_1$, we have $\ker R(z,A)=A(0)\subset\ker(P_N(A))$. Then our results follow
form the corresponding results for $R(z,A)$.
\end{proof}

\subsection{Spectral flow for closed linear relations.}\label{ss:A3}
At first we give the definition of admissible relations.

\begin{figure}[tb!]
\includegraphics{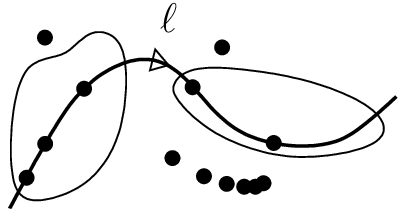}
\includegraphics{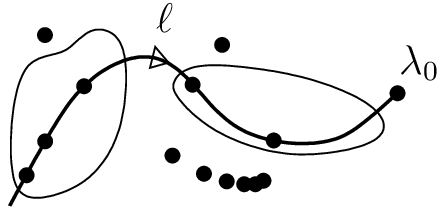}

\bigskip

\includegraphics{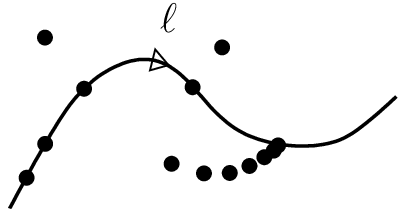}

\caption{Upper left: Closed linear relation with admissible spectrum with respect to $\ell$. Upper
right: Admissible spectrum with $\la_0\in \ol{\ell}\setminus\ell$\,. Bottom: Non-admissible
spectrum since $\s(A)\cap N\ne\s(A)\cap\ell$ and $\dim\ran P_N(A)=+\infty$, each contradicting
\eqref{e:a-admissible}(i) and (ii)\label{f:1}}
\end{figure}

\begin{figure}[tb!]
\includegraphics{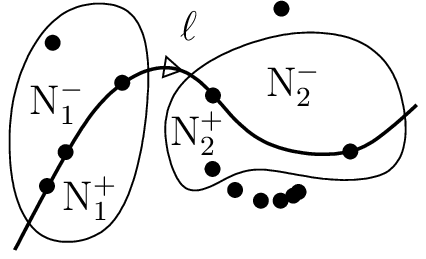}

\bigskip

\includegraphics{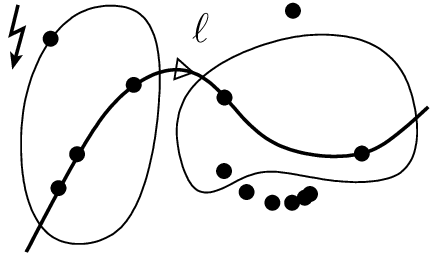}
\includegraphics{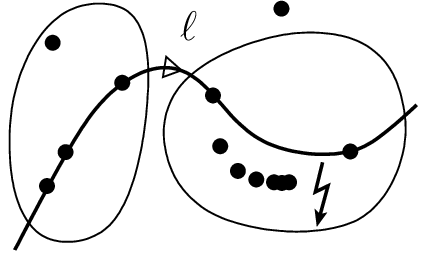}

\bigskip

\includegraphics{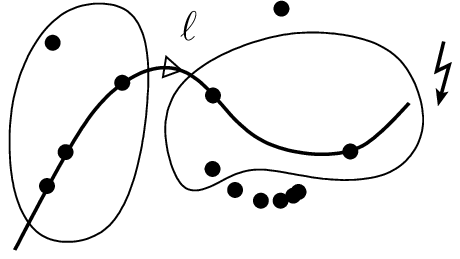}
\includegraphics{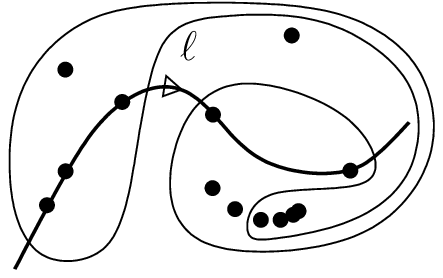}

\caption{Top: Admissible test domain triple $(N, N^+,N^-)$. Middle and bottom: Non-admissible test
domain triples. Middle left: $\s(A)\cap\partial N\ne\emptyset$\,. Middle right: $\dim
P_N(A)=+\infty$\,. Bottom left: $\ol{N^0}=\ol{N}\cap\ell$ not satisfied. Bottom right: $N\cap\ell$
not connected while $N$ connected\label{f:2}}
\end{figure}

\begin{definition}\label{d:admissible} (Cf. Zhu \cite[Definition 1.3.6]{Zh00},
\cite[Definition 2.1]{Zh01}, and \cite[Definition 2.6]{ZhLo99}). Let
$\ell\<\C$ be a $C^1$ real 1-dimensional submanifold which has no
boundary and is co-oriented (i.e., with oriented normal bundle). Let
$X$ be a Banach space and $A\in\CLR(X)$ be a closed linear relation.

\noi (a) We call $A$ {\em admissible} with respect to $\ell$, if
there exists a bounded open subset $N$ of $\C$ (called {\em test domain})
such that (see also Fig. \ref{f:1})
\begin{equation}\label{e:a-admissible}
\text{(i) }\s(A)\cap N\ =\ \s(A)\cap\ell\quad \tand\quad \text{(ii) }\dim\ran P_N(A)<+\infty\/.
\end{equation}
Then $P_N(A)$ does not depend on the choice of such a test domain $N$.
We set
\begin{equation}\label{e:nullity}
P_{\ell}(A)\ :=\  P_N(A) \quad\tand\quad \nu_{\ell}(A)\ :=\ \dim\ran P_N(A).
\end{equation}
For fixed $\ell$ and $X$ we shall denote the space of all
$\ell$-admissible closed linear relations in $X$ by $\Aa_{\ell}(X)$.

\smallskip

\noi (b) Let $A\subset\Aa_{\ell}(X)$. Let $N\subset\C$ be open and bounded with $C^1$ boundary. We
set $N^0:=N\cap\ell$ and assume
\begin{equation}
\ol{N^0}\subset\ell,\ \quad \sigma(A)\cap\ell\subset N,\ \quad \sigma(A)\cap\partial N=\emptyset,\
\quad{\rm and}\quad \dim\image P_N(A)<+\infty.
\end{equation}
Moreover, we require that $\partial N$ intersects $\ell$ transversely and that each connected
component of $N$ has connected intersection with $\ell$. Then
\begin{itemize}
\item $\ol{N^0}=\ol{N}\cap\ell$,

\item the positive and negative parts $N^{\pm}$ of $N$ with respect to the co-orientation of
$\ell$ are well-defined, and

\item we have a disjoint union $N=N^+\cup N^0\cup N^-$\/.

\end{itemize}
We shall call the resulting triple $(N;N^+,N^-)$ {\em admissible} with respect to $\ell$ and $A$,
and write $(N;N^+,N^-)\in\Aa_{\ell,A}$. Clearly the set $\Aa_{\ell,A}$ is non-empty. See also Fig.
\ref{f:2}.
\end{definition}

\begin{note}
To prove $\ol{N^0}=\ol{N}\cap\ell$, we notice $\ol{N^0}\subset\ell$. So we have
$\ol{N^0}\subset\ol{N}\cap\ell$. Since $\partial N$ intersects $\ell$ transversely, we have
$\partial N\cap\ell\subset\ol{N^0}$. Then $\ol{N}\cap\ell\subset \ol{N^0}$. That yields
$\ol{N^0}=\ol{N}\cap\ell$.
\end{note}

Now we are able to define spectral-continuity and the spectral flow. Our data are a co-oriented
curve $\ell\<\C$, a family of Banach spaces $\{X_s\}_{s\in [a,b]}$\, and a family $\{A_s\}_{s\in
[a,b]}$ of closed linear relations on $X_s$\/.

\begin{figure}[tb!]
\includegraphics{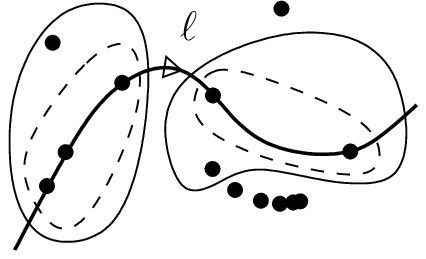}

\bigskip

\includegraphics{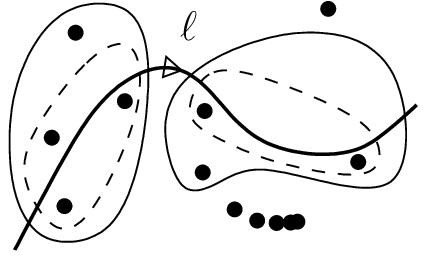}
\includegraphics{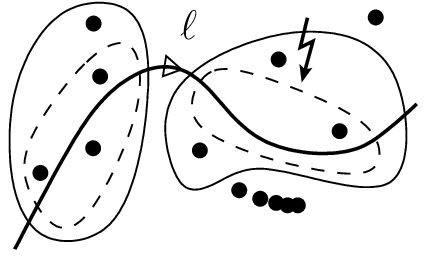}

\caption{Neighborhoods of the spectra of a spectral-continuous family near $\ell$ at $s_0$\,: The
same test domain triple $(N, N^+,N^-)$ (solid line) works at $s_0$ in the upper figure, at $s_0-\e$
in bottom left, and at $s_0+\e$ in bottom right. The sub-triple $(N', N'^+,N'^-)$ (encircled by the
broken line) will also work at $s_0$ and for $s_0-\e$, but only for $s_0+\e'$ with
$\e'\ll\e$\label{f:3}}
\end{figure}

\begin{definition}\label{d:spectral-continuous}
(a) We shall call the family $\left\{A_s\in\Aa_{\ell}(X_s)\right\}$, $s\in[a,b]$ {\em spectral
continuous} near $\ell$ at $s_0\in[a,b]$, if
\begin{enumerate}

\item there is an $\e(s_0)>0$ and a triple $(N;N^+,N^-)$ such that
$$(N;N^+,N^-)\in\Aa_{\ell,A_s}\quad\text{for all } |s-s_0|<\e(s_0),$$

\item for all triple $(N^{\prime};N^{\prime +},N^{\prime -})\in\Aa_{\ell,A_{s_0}}$ with
$N^{\prime}\subset N$ and $N^{\prime\pm}\subset N^{\pm}$, we have
$$(N^{\prime};N^{\prime +},N^{\prime -})\in\Aa_{\ell,A_s}\quad \text{for all } |s-s_0|\ll 1,$$

\item for all triple $(N^{\prime};N^{\prime +},N^{\prime -})$ and subinterval $K$ of
$\left(s_0-\e(s_0),s_0+\e(s_0)\right)$ with $N^{\prime}\subset N$, $N^{\prime\pm}\subset N^{\pm}$,
and $(N^{\prime};N^{\prime +},N^{\prime -})\in\Aa_{\ell,A_s}$ for all $s\in K$, we have $\dim\image
P_{N^{\prime}}(A_s)$ and $\dim\image P_{N^{\pm}\setminus N^{\prime\pm}}(A_s)$ do not depend on
$s\in K$. See also Fig. \ref{f:3} and Fig. \ref{f:4}.
\end{enumerate}

\begin{figure}[tb!]
\includegraphics{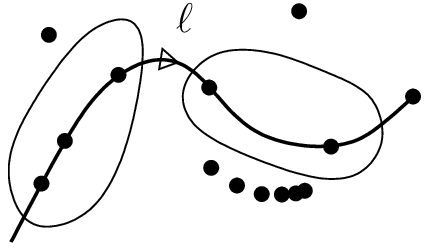}
\includegraphics{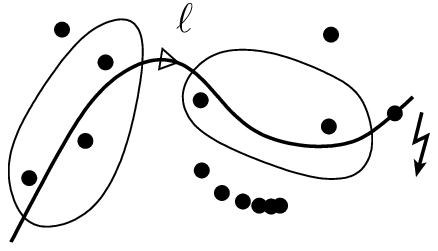}

\caption{A curve of closed linear relations with admissible spectra may fail to become
spectral-continuous near $\ell$ due to a spectral point $\la_0\in\ol{\ell}\setminus \ell$ for
$s_0$\, (left), which moves inward on $\ell$ for $s=s_0\pm\e$ (right)\label{f:4}}
\end{figure}

\noi We shall call the family $\{A_s\}\in \Aa_{\ell}(X_s)$, $s\in
[a,b]$ {\em spectral-continuous} near $\ell$, if it is
spectral-continuous near $\ell$ at $s_0$ for all $s_0\in [a,b]$.

\smallskip

\noi (b) Let $\{A_s\}\in\Aa_{\ell}$, $s\in [a,b]$  be a family of admissible operators that is
spectral-continuous near $\ell$. Then there exists a partition
\begin{equation}\label{e:partition}
a\ =\ s_0\leq t_1\leq s_1\leq\ldots s_{n-1}\leq t_n\leq s_n\ =\ b
\end{equation}
of the interval $[a,b]$, such that $s_{k-1},s_k\in (t_k-\e(t_k),t_k+\e(t_k))$, $k=1,\ldots,n$. Let
$(N_k;N_k^+,N_k^-)$ be like a $(N;N^+,N^-)$ in (a) for $t_k$ such that
$s_{k-1},s_k\in(t_k-\e(t_k),t_k+\e(t_k)),k=1,\ldots,n$.  Then we define the {\em spectral flow} of
$\{A_s\}_{a\leq s\leq b}$ through $\ell$ by
\begin{equation}\label{e:spectral-flow}
\SF_{\ell}\bigl\{A_s; a\leq s \leq b\bigr\} \ :=\
\sum_{k=1}^n\Bigl(\dim\ran\bigl(P_{N_k^-}(A_{s_{k-1}})\bigr) -
\dim\ran\bigl(P_{N_k^-}(A_{s_k})\bigr)\Bigr) .
\end{equation}
When $\ell$ is a bounded open submanifold of $i\R$ containing 0 with co-orientation from left to
right, we set
$$\SF\bigl\{A_s; a\leq s \leq b\bigr\}
\ :=\ \SF_{\ell}\bigl\{A_s; a\leq s \leq b\bigr\}.$$
\end{definition}

From our assumptions it follows that the spectral flow is independent of the choice of the
partition (\ref{e:partition}) and admissible $(N_k;N_k^+,N_k^-)$, hence it is well-defined. From
the definition it follows that the spectral flow through $\ell$ is path additive under catenation
and homotopy invariant. For details of the proof, see \cite{Ph96} and \cite{ZhLo99}.

\begin{lemma}\label{l:16} Let ${\ell\subset\C}$ be as in Definition \ref{d:admissible}a and $X$ be a Banach space.
Let $\{A_s\in\CLR_{\ell}(X)\}$, $a\le s\le b$ be a continuous family and
$A:=A_{s_0}\in\Aa_{\ell}(X)$ with $s_0\in[a,b]$. Let ${\mathfrak{m}}$ be a bounded open submanifold
of $\ell$ such that $\ol{\mathfrak{m}}\subset\ell$. If $\sigma(A_s)\cap\ell\subset{\mathfrak{m}}$
for all $s\in[a,b]$, we have:

(a) There exists an $\e>0$ such that $A_s\in\Aa_{\ell}(X)$ for all $s\in(s_0-\e,s_0+\e)$.

(b) The family $\{A_s\}$ is spectral continuous near $\ell$ at $s_0$.
\end{lemma}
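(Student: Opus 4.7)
The approach is to use the admissibility of $A:=A_{s_0}$ together with the gap continuity of the family $\{A_s\}$ and the Kato perturbation theory (\cite[Lemma I.4.10, Theorem III.6.17]{Ka76}) to carry admissibility and uniform control of spectral data over to nearby $A_s$. The uniform hypothesis $\sigma(A_s)\cap\ell\subset{\mathfrak m}$ for all $s\in[a,b]$, combined with the compactness of $\overline{{\mathfrak m}}$, is what prevents the on-$\ell$ spectrum from sliding out of ${\mathfrak m}$ and escaping from a fixed test domain.

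Since $A$ is admissible, I pick a test domain $N_0$ so that $\sigma(A)\cap N_0=\sigma(A)\cap\ell$; by finiteness of $\dim\image P_{N_0}(A)$ this is a finite set $\{\lambda_1,\dots,\lambda_k\}\subset{\mathfrak m}$. Choose an open arc $N^0\subset\ell$ with $\overline{{\mathfrak m}}\subset N^0$ and $\overline{N^0}$ compact in $\ell$, and then take $N\subset N_0$ to be a thin tubular neighborhood of $\overline{N^0}$ in $\C$ with $\partial N$ of class $C^1$, transversal to $\ell$, disjoint from $\sigma(A)$, and with each connected component of $N$ meeting $\ell$ in a connected arc, so that $\overline{N}\cap\ell=\overline{N^0}$. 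By construction $\sigma(A)\cap N=\{\lambda_1,\dots,\lambda_k\}$ and $\dim\image P_N(A)<+\infty$. By gap continuity of $\{A_s\}$ and compactness of $\partial N$, for $s$ close enough to $s_0$ we have $\sigma(A_s)\cap\partial N=\emptyset$, the contour-integral projection $P_N(A_s)$ is continuous in $s$, and $\dim\image P_N(A_s)=\dim\image P_N(A)<+\infty$ by Kato's stability result. Together with $\sigma(A_s)\cap\ell\subset{\mathfrak m}\subset N$, this gives $(N;N^+,N^-)\in\Aa_{\ell,A_s}$ for $|s-s_0|<\e$.

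For part (a) I produce a test domain for each such $A_s$: since $\dim\image P_N(A_s)<+\infty$, the set $\sigma(A_s)\cap N$ is finite, and its on-$\ell$ points are precisely $\sigma(A_s)\cap\ell$. I enclose each of them in a small disk $D_i\subset N$ disjoint from the (finitely many) remaining off-$\ell$ eigenvalues in $N$ and set $N_s:=\bigcup_i D_i$; then $\sigma(A_s)\cap N_s=\sigma(A_s)\cap\ell$ and $\dim\image P_{N_s}(A_s)<+\infty$, so $A_s\in\Aa_\ell(X)$. For part (b), condition (1) of Definition \ref{d:spectral-continuous}a is exactly what I established above. For conditions (2) and (3), given any sub-triple $(N^{\prime};N^{\prime+},N^{\prime-})\in\Aa_{\ell,A}$ with $N^{\prime}\subset N$ and $N^{\prime\pm}\subset N^{\pm}$, I repeat the same argument with $N^{\prime}$ in place of $N$: since $\sigma(A)\cap\ell\subset N^{\prime}$, the compact set $\overline{{\mathfrak m}}\setminus N^{\prime}$ lies in $\rho(A)$, so by uniform continuity of the resolvent we get $\sigma(A_s)\cap(\overline{{\mathfrak m}}\setminus N^{\prime})=\emptyset$ for $s$ close to $s_0$, whence $\sigma(A_s)\cap\ell\subset{\mathfrak m}\cap N^{\prime}\subset N^{\prime}$. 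Kato's stability then yields the constancy of $\dim\image P_{N^{\prime}}(A_s)$; and from the additivity $P_{N^{\pm}}(A_s)=P_{N^{\prime\pm}}(A_s)+P_{N^{\pm}\setminus\overline{N^{\prime\pm}}}(A_s)$ we obtain the constancy of $\dim\image P_{N^{\pm}\setminus N^{\prime\pm}}(A_s)$.

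The main technical obstacle is to rule out on-$\ell$ spectral leakage from the chosen test domain as $s$ varies; this is where the uniform hypothesis $\sigma(A_s)\cap\ell\subset{\mathfrak m}$ is indispensable, because without it eigenvalues could migrate along $\ell$ out of ${\mathfrak m}$ and hence out of $N$ (or $N^{\prime}$) and destroy admissibility. The compactness of $\overline{{\mathfrak m}}$ (and of $\overline{{\mathfrak m}}\setminus N^{\prime}$ for the sub-triple condition) upgrades the pointwise continuity of the resolvent to the uniform statements needed; this is the only subtlety beyond an otherwise routine application of Kato's stability results.
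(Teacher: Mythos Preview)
Your overall strategy matches the paper's: pick a test domain $N$ adapted to ${\mathfrak m}$, use gap continuity and compactness of $\partial N$ to keep $\partial N$ in the resolvent for nearby $s$, and invoke Kato's stability for $\dim\image P_N(A_s)$. Parts (a) and condition (1) of spectral continuity are handled correctly, and your treatment of condition (2) is essentially the paper's.

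There is, however, a genuine gap in your argument for condition (3). You write that from the additivity
\[
P_{N^\pm}(A_s)\ =\ P_{N'^\pm}(A_s)+P_{N^\pm\setminus\overline{N'^\pm}}(A_s)
\]
you obtain the constancy of $\dim\image P_{N^\pm\setminus N'^\pm}(A_s)$. For this deduction you would need $\dim\image P_{N^\pm}(A_s)$ and $\dim\image P_{N'^\pm}(A_s)$ to be constant on $K$, but neither is established and in general neither holds: the boundary between $N'^+$ and $N'^-$ inside $N'$ is a piece of $\ell$, and eigenvalues of $A_s$ lying in $N'$ may sit on $\ell$ for some $s$ and move off into $N'^\pm$ for other $s$, so $\dim\image P_{N'^\pm}(A_s)$ can jump. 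What \emph{is} constant is $\dim\image P_N(A_s)$ and $\dim\image P_{N'}(A_s)$, which only gives constancy of the sum over $+$ and $-$, not of each piece separately. (You also start condition (3) from the hypothesis $(N';N'^+,N'^-)\in\Aa_{\ell,A_{s_0}}$, but the definition only assumes membership in $\Aa_{\ell,A_s}$ for $s\in K$, and $K$ need not contain $s_0$.)

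The paper closes this gap by producing an explicit contour for $P_{N^\pm\setminus N'^\pm}(A_s)$ that avoids the spectrum for all $s\in K$: since $\sigma(A_s)\cap\ell\subset N'$, the arc $(N\cap\ell)\setminus N'$ lies in $\rho(A_s)$, so the closed curve
\[
C^\pm\ :=\ (\partial N\cap N^\pm)\ \cup\ \bigl((N\cap\ell)\setminus N'\bigr)\ \cup\ (\partial N'\cap N^\pm)
\]
stays in $\rho(A_s)$ for every $s\in K$, and the contour integral over $C^\pm$ gives a continuous family of projections, hence constant rank. Equivalently, one may argue that eigenvalues trapped in $N\setminus N'$ cannot touch $\ell$ (again because $\sigma(A_s)\cap\ell\subset N'$) and therefore cannot migrate between $N^+\setminus N'^+$ and $N^-\setminus N'^-$. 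You already have all the ingredients for this---in particular you established $\sigma(A_s)\cap(\overline{{\mathfrak m}}\setminus N')=\emptyset$---so the repair is local, but the additivity sentence as written does not carry the weight you place on it.
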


\begin{proof} \textit{a}.  Since $\ell$ is co-oriented and $A\in\Aa_{\ell}(X)$, there exists a bounded
open subset $N$ of $\C$ such that $\sigma(A)\cap N=\sigma(A)\cap\ell$ and $\dim\image
P_N(A)<+\infty$. Since ${\mathfrak{m}}$ is a bounded open submanifold of $\ell$,
$\ol{\mathfrak{m}}\subset\ell$ and $\sigma(A)\cap\ell\subset{\mathfrak{m}}$, we can choose $N$ such
that $\partial N$ is $C^1$, $N\cap\ell={\mathfrak{m}}$ and $\sigma(A)\cap\partial N=\emptyset$.
Since $\{A_s\in\CLR_{\ell}(X)\}$, $a\le s\le b$ is a continuous family and $\partial N$ is compact,
there exists an $\e>0$ such that for all $s\in(s_0-\e,s_0+\e)$, $\sigma(A_s)\cap N=\emptyset$. Then
$\{P_N(A_s)\}$, $|s-s_0|<\e$ is a well-defined continuous family of projections on $X$ and
$\dim\image P_N(A_s)\le\dim\image P_N(A)<+\infty$. We also have
$\sigma(A_s)\cap\ell\subset{\mathfrak{m}}\subset N$. So there exists an open subset $N_s$ of $N$
such that $\sigma(A_s)\cap\ell=\sigma(A_s)\cap N_s$, and $\dim\image P_{N_s}(A_s)\le \dim\image
P_N(A_s)<+\infty$. (a) is proved.

\noi\textit{b}. Since $A\in\Aa_{\ell}(X)$, there exists a triple $(N;N^+,N^-)\in\Aa_{\ell,A}$. As
in the proof of (a), we can choose $N$ such that $N\cap\ell={\mathfrak{m}}$ and $\sigma(A)\cap
N=\sigma(A)\cap\ell$. Let $\e$ be as in the proof of (a). If $|s-s_0|<\e$, we have
$\sigma(A_s)\cap\ell\subset{\mathfrak{m}}\subset N$, $\sigma(A_s)\cap\partial N=\emptyset$ and
$\dim\image P_N(A_s)<+\infty$. So $(N;N^+,N^-)\in\Aa_{\ell,A_s}$ for $|s-s_0|<\e$.

Given a triple $(N^{\prime};N^{\prime +},N^{\prime -})\in\Aa_{\ell,A}$ with $N^{\prime}\subset N$
and $N^{\prime\pm}\subset N^{\pm}$, we have
$\overline{N^{\prime}\cap\ell}=\overline{N\cap\ell}\subset\ol{\mathfrak{m}}\subset\ell$ and
$\sigma(A)\cap\partial N^{\prime}=\emptyset$. Since $\partial N^{\prime}$ is compact, for
$|s-s_0|\ll 1$, we have $\sigma(A_s)\cap\partial N^{\prime}=\emptyset$. Then $\dim\image
P_{N^{\prime}}(A_s)$ does not depend on $s$ and is finite. Since $\sigma(A)\cap
N=\sigma(A)\cap\ell\subset\sigma(A)\cap N^{\prime}$, we
have $\sigma(A)\cap N=\sigma(A)\cap N^{\prime}$. By%
\[
\dim\image P_{N^{\prime}}(A_s)\ =\ \dim\image P_{N^{\prime}}(A)\ =\ \dim\image P_N(A)\ =\
\dim\image P_N(A_s),%
\]
we get $P_{N^{\prime}}(A_s)=P_N(A_s)$. So we have $\sigma(A_s)\cap\ell\subset\sigma(A_s)\cap
N=\sigma(A_s)\cap N^{\prime}\subset N^{\prime}$ and $(N^{\prime};N^{\prime +},N^{\prime
-})\in\Aa_{\ell,A_s}$.

Given an interval $K\subset\left(s_0-\e(s_0),s_0+\e(s_0)\right)$ and a triple
$(N^{\prime};N^{\prime +},N^{\prime -})\in\Aa_{\ell,A_s}$ with $N^{\prime}\subset N$,
$N^{\prime\pm}\subset N^{\pm}$ and $s\in K$, we have $\sigma(A_s)\cap\partial
N=\sigma(A_s)\cap\partial N^{\prime}=\emptyset$ for all $s\in K$. Since $\sigma(A_s)\cap\ell\subset
N$ and $\sigma(A_s)\cap\ell\subset N^{\prime}$, we have $\sigma(A_s)\cap\left(\ell\setminus
N^{\prime}\right)=\emptyset$ and $\sigma(A_s)\cap\left({\mathfrak{m}}\setminus
N^{\prime}\right)=\emptyset$. Define the closed curve%
\[
C^{\pm}\ :=\ (\partial N\cap N^{\pm})\cup({\mathfrak{m}}\setminus N^{\prime})\cup(\partial N\cap
N^{\prime\pm})%
\]
with the orientation of $\partial N\cap N^{\pm}$ and opposite orientation of $\partial N\cap
N^{\prime\pm}$.
Then we have%
\[
P_{N^{\pm}\setminus N^{\prime\pm}}(A_s)\ =\ -\frac{1}{2\pi i}\int_{C^{\pm}}(A_s-\zeta I)^{-1}\zeta.%
\]
The families of projections $\left\{P_{N^{\prime}}(A_s)\right\}$ and $\{P_{N^{\pm}\setminus
N^{\prime\pm}}(A_s)\}$, $s\in K$ are continuous. So we have $\dim\image P_{N^{\prime}}(A_s)$ and
$\dim\image P_{N^{\pm}\setminus N^{\prime\pm}}(A_s)$ do not depend on $s\in K$. (b) is proved.
\end{proof}

\medskip

We close the appendix by discussing the invariance of the spectral flow under embeddings in larger
spaces, assuming a simple regularity condition.

\begin{lemma}\label{l:sf-embedding}
Let $\{Y_s; s\in [a,b]\}$ and $\{X_s; s\in [a,b]\}$ be two families of (complex) Banach spaces with
$X_s\< Y_s$ (no density or continuity of the embeddings are assumed). Let $\{A_s\in\CLR(Y_s); s\in
[a,b]\}$ be a spectral-continuous curve near a fixed co-oriented curve $\ell\<\C$. We assume that
$A_s(X_s)\< X_s$ for all $s$ and that the curve%
\[
\{A_s|_{X_s}\in\CLR(X_s); s\in [a,b]\}%
\]
is also spectral-continuous near $\ell$. Then we have
\[
\SF_{\ell}\{A_s;s\in [a,b]\}\ =\  \SF_{\ell}\{A_s|_{X_s};s\in
[a,b]\}
\]
if the difference $\dim\nu_{\ell}(A_s)-\dim\nu_{\ell}(A_s|_{X_s})$, $s\in [a,b]$, is a constant
$m$. In this case, $m\ge 0$.
\end{lemma}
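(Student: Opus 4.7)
The plan is to reduce the global spectral flow statement to a finite-dimensional computation on the ranges of the spectral projections, then exploit the constancy of $m$ to show that the ``excess'' contribution vanishes.

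\textbf{Step 1: Resolvent identity on invariant subspaces.} The invariance $A_s(X_s)\<X_s$ together with uniqueness of resolvents yields, for every $\zeta\in\rho(A_s)\cap\rho(A_s|_{X_s})$, the identity $(A_s-\zeta I)\ii|_{X_s}=(A_s|_{X_s}-\zeta I)\ii$. Integrating around a contour $\partial N$ in the common resolvent set gives $P_N(A_s)|_{X_s}=P_N(A_s|_{X_s})$ and, as a consequence, $\operatorname{ran} P_N(A_s|_{X_s})=X_s\cap\operatorname{ran} P_N(A_s)$. Taking $N$ a test domain common to $A_s$ and $A_s|_{X_s}$ immediately gives $\nu_{\ell}(A_s|_{X_s})\le\nu_{\ell}(A_s)$, so $m\ge 0$.

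\textbf{Step 2: Common partition and tight test triples.} By joint spectral continuity, refine the two partitions produced by Definition \ref{d:spectral-continuous}(b) to obtain a common partition $a=s_0<s_1<\dots<s_n=b$ with test triples $(N_k;N_k^+,N_k^-)$ admissible for both $\{A_s\}$ and $\{A_s|_{X_s}\}$ on each $[s_{k-1},s_k]$. Enlarge $N_k$ if necessary so that the full $\ell$-spectra of both $A_s$ and $A_s|_{X_s}$ on the subinterval lie inside $N_k$, and then shrink to $(N_k';N_k'^+,N_k'^-)\<(N_k;N_k^+,N_k^-)$ with $\sigma(A_{t_k})\cap N_k'\<\ell$ and $\sigma(A_{t_k}|_{X_{t_k}})\cap N_k'\<\ell$, still capturing all nearby $\ell$-spectra. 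Condition (3) of Definition \ref{d:spectral-continuous} then makes $\dim\operatorname{ran} P_{N_k^-\setminus N_k'^-}(A_s)$ and $\dim\operatorname{ran} P_{N_k^-\setminus N_k'^-}(A_s|_{X_s})$ constant on the subinterval, so the $k$-th spectral flow contribution of each family reduces to $\dim\operatorname{ran} P_{N_k'^-}(\cdot)$-differences at the endpoints.

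\textbf{Step 3: Finite-dimensional quotient.} The compression $A_s^{N_k'}:=P_{N_k'}(A_s)A_sP_{N_k'}(A_s)$ acts on the finite-dimensional space $V_s:=\operatorname{ran} P_{N_k'}(A_s)\<Y_s$ with $\sigma(A_s^{N_k'})=\sigma(A_s)\cap N_k'$ (Lemma \ref{l:spectral-check}). By Step 1 the subspace $V_s\cap X_s=\operatorname{ran} P_{N_k'}(A_s|_{X_s})$ is invariant under $A_s^{N_k'}$. Since $V_s$ is finite-dimensional, the quotient $Q_s:=V_s/(V_s\cap X_s)$ is well-defined \emph{even though $X_s$ need not be closed in $Y_s$}, and the induced operator $\bar A_s^{N_k'}$ has spectrum equal to the multiset difference $\sigma(A_s^{N_k'})\setminus\sigma(A_s^{N_k'}|_{V_s\cap X_s})$. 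Consequently, on each subinterval the full and restricted spectral-flow contributions differ precisely by that of $\{\bar A_s^{N_k'}\}$.

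\textbf{Step 4: The quotient has no spectral crossings.} Spectral continuity of both families forces $\dim V_s$ and $\dim(V_s\cap X_s)$ to be constant on $[s_{k-1},s_k]$, so $\dim Q_s\equiv q_k$ for some constant $q_k$. The tight choice of $N_k'$ together with our arrangement that $N_k$ captures all $\ell$-spectra on the subinterval gives $q_k=\nu_{\ell}(A_s)-\nu_{\ell}(A_s|_{X_s})=m$ at every $s\in[s_{k-1},s_k]$; applying Step 1 once more, this time to contours around the $\ell$-spectral part alone, shows the $\ell$-spectral subspace of $Q_s$ also has dimension $m$ at every $s$. Hence the entire finite-dimensional spectrum of $\bar A_s^{N_k'}$ lies on $\ell$ throughout the subinterval, no eigenvalue crosses $\ell$, and $\SF_{\ell}\{\bar A_s^{N_k'}\}=0$. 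Summing the contributions gives $\SF_{\ell}\{A_s\}=\SF_{\ell}\{A_s|_{X_s}\}$. The main obstacle is this last matching of the $\ell$-spectral count of $Q_s$ with $m$: it requires the tight nesting $N_k'\<N_k$ together with the global constancy hypothesis on $m$, and is what ultimately allows the argument to sidestep the non-existence of the Banach quotient $Y_s/X_s$.
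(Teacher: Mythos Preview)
Your argument is correct and follows essentially the same route as the paper: both reduce to a finite-dimensional spectral comparison on the ranges of the local spectral projections and use the constancy of $m$ to force the ``excess'' spectrum (the multiset difference between the spectra of $A_s$ and $A_s|_{X_s}$ inside the test domain) to lie entirely on $\ell$, whence it contributes nothing to the spectral flow. The only cosmetic difference is that the paper carries out the dimension count directly via the pointwise inequalities $\dim\ker(A_s|_{X_s}-\lambda I)^k\le\dim\ker(A_s-\lambda I)^k$ together with the identities $\dim\ran P_N(A_s)=\dim\ran P_N(A_s|_{X_s})+m$ and $\nu_\ell(A_s)=\nu_\ell(A_s|_{X_s})+m$, whereas you organize the same count through the quotient operator on $Q_s=V_s/(V_s\cap X_s)$ and the factorization of characteristic polynomials; the two presentations are equivalent.
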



\begin{proof} We go back to the local definition of $\SF_{\ell}$ and reduce to the
finite-dimensional case. Let $s_0\in [a,b]$. Choose a triple
\[
(N_1;N_1^+,N_1^-)\in \Aa_{\ell,A_{s_0}}
\]
such that $N_1$ satisfies \eqref{e:a-admissible} for $A_{s_0}$ and $A_{s_0}|_{X_{s_0}}$. By
\eqref{e:a-admissible} we have
$$P_{N_1}(A_{s_0})\ =\ \nu_{\ell}(A_{s_0})\quad {\rm and}\quad
P_{N_1}(A_{s_0}|_{X_{s_0}})\ =\ \nu_{\ell}(A_{s_0}|_{X_{s_0}}).$$  Then by spectral continuity,
there exists a triple $(N;N^+,N^-)$ with $\ol{N}\subset N_1$ with
$$(N;N^+,N^-)\in\Aa_{\ell,A_s}\cap\Aa_{\ell,A_s|_{X_s}}\quad {\rm for}\;|s-s_0|\ll 1.$$
Then we have
$$P_{N_1}(A_{s_0})\ =\ \nu_{\ell}(A_{s_0})\ =\ P_N(A_{s_0})\quad {\rm and}\quad P_{N_1}(A_{s_0}|_{X_{s_0}})
\ =\ \nu_{\ell}(A_{s_0}|_{X_{s_0}})=P_N(A_{s_0}|_{X_{s_0}}),$$ and for $\abs{s-s_0}\ll 1$
\begin{equation}\label{e:nus}
\dim\range P_N(A_s)\ =\ \nu_{\ell}(A_{s_0})\ =\ \nu_{\ell}(A_{s_0}|_{X_{s_0}})+m \ =\  \dim\range
P_N(A_s|_{X_s})+m
\end{equation}
by spectral-continuity and our assumption. Now we consider for each
$\la\in\C\cap N$ the algebraic multiplicities and find
\begin{equation}\label{e:kernels}
\dim \ker (A_s|_{X_s}-\la I|_{X_s})^k \leq \dim \ker (A_s-\la I)^k
\end{equation}
for each $k\in\N$. By our assumption, we have $\nu_{\ell}(A_s)=\nu_{\ell}(A_s|_{X_s})+m$. Comparing
\begin{align*}
\dim\range P_N(A_s)&\ =\ \sum_{\la\in\s(A_s)\cap N}\ \sum_{k\in\N} \dim\ker(A_s-\la I)^k\,,\\
\dim\range P_N(A_s|_{X_s}) &\ =\  \sum_{\la\in\s(A_s|_{X_s})\cap N}\ \sum_{k\in\N}
\dim\ker(A_s|_{X_s}-\la I|_{X_s})^k\,, \\
\nu_{\ell}(A_s)&\ =\ \sum_{\lambda\in\sigma(A_s)\cap\ell} \ \sum_{k\in\N} \dim\ker(A_s-\la I)^k\,,\tand\quad\\
\nu_{\ell}(A_s|_{X_s})&\ =\ \sum_{\lambda\in\sigma(A_s|_{X_s})\cap\ell} \ \sum_{k\in\N}
\dim\ker(A_s|_{X_s}-\la I|_{X_s})^k\,,
\end{align*}
we obtain from equation \eqref{e:nus} and the inequalities \eqref{e:kernels} that $m\ge 0$ and
\[
\dim \ker (A_s|_{X_s}-\la I|_{X_s})^k\ =\ \dim \ker (A_s-\la I)^k
\]
for each $\la\in N\setminus\ell$ and $k\in\N$. So
$
\s(A_s)\cap (N\setminus\ell)\ =\ \s(A_s|_{X_s})\cap
(N\setminus\ell);
$
and the algebraic multiplicities with respect to $A_s$ and
$A_s|_{X_s}$ coincide in each point. By the definition of the
spectral flow, the two spectral flows must coincide.
\end{proof}

\addtocontents{toc}{\medskip\noi}

\end{document}